\newtcolorbox{activitybox}[1][]{%
	breakable,
	enhanced,
	colback=lightergray,
	boxrule=3pt,
	arc=5pt,
	outer arc=5pt,
	boxsep=10pt,
	colframe=darkergray,
	coltitle=white,
	#1
}
\newcommand{\quot}[2]{%
	\raise1ex\hbox{$#1$}\Big/\lower1ex\hbox{$#2$}%
}
\newcommand{\colim}{\varinjlim}
\renewcommand{\lim}{\varprojlim}
\def\Rvarlim@#1#2{%
	\vtop{\m@th\ialign{##\cr
			\hfil$#1\operator@font Rlim$\hfil\cr
			\noalign{\nointerlineskip\kern1.5\ex@}#2\cr
			\noalign{\nointerlineskip\kern-\ex@}\cr}}%
}
\def\Rlim{%
	\mathop{\mathpalette\Rvarlim@{\leftarrowfill@\textstyle}}\nmlimits@
}
\newcommand{\expl}[2]{\underset{\mathclap{\minibox[c]{$\uparrow$\\ \fbox{\footnotesize #2}}}}{#1}}
\newcommand{\dra}{\dashrightarrow}
\def\xto#1#2{\xrightarrow{#1}{#2}}
\def\xto#1{\xrightarrow{#1}}
\newcommand{\surj}{\twoheadrightarrow}
\newcommand{\inj}{\hookrightarrow}
\newcommand{\ttilde}{\widetilde}
\newcommand{\HHom}{\mathcal{H}om}
\newcommand{\et}{\acute{e}t}
\newcommand{\stacksproj}[1]{{\cite[Tag~\href{http://stacks.math.columbia.edu/tag/#1}{#1}]{Stacks_Project}}}
\newcommand{\stacksprojs}[2]{\cite[Tags ~\href{http://stacks.math.columbia.edu/tag/#1}{#1} and ~\href{http://stacks.math.columbia.edu/tag/#2}{#2}]{Stacks_Project}}
\newcommand{\inc}{\subseteq}
\newcommand{\cni}{\supseteq}
\newcommand{\esp}{\mbox{ }}
\newcommand{\bighat}{\widehat}
\DeclareMathAlphabet{\mathchanc}{OT1}{pzc}%
{m}{it}
\renewcommand{\set}[2]{\{ \ #1 \  | \ #2 \ \}}
\newcommand{\bA}{\mathbb{A}}
\newcommand{\bF}{\mathbb{F}}
\newcommand{\bP}{\mathbb{P}}
\newcommand{\bQ}{\mathbb{Q}}
\newcommand{\bZ}{\mathbb{Z}}
\newcommand{\scr}{\mathcal}
\newcommand{\cA}{\scr{A}}
\newcommand{\cC}{\scr{C}}
\newcommand{\cF}{\scr{F}}
\newcommand{\cH}{\scr{H}}
\newcommand{\cK}{\scr{K}}
\newcommand{\cM}{\scr{M}}
\newcommand{\cN}{\scr{N}}
\newcommand{\cO}{\scr{O}}
\newcommand{\cP}{\scr{P}}
\newcommand{\cR}{\scr{R}}
\newcommand{\cV}{\scr{V}}
\newcommand{\cX}{\scr{X}}
\DeclareMathOperator{\injj}{{inj}}
\DeclareMathOperator{\FM}{{FM}}
\DeclareMathOperator{\alb}{{alb}}
\DeclareMathOperator{\Alb}{{Alb}}
\DeclareMathOperator{\Tr}{Tr}
\DeclareMathOperator{\codim}{codim}
\DeclareMathOperator{\Tor}{Tor}
\DeclareMathOperator{\id}{{id}}
\DeclareMathOperator{\im}{{im}}
\DeclareMathOperator{\length}{{length}}
\DeclareMathOperator{\Pic}{Pic}
\DeclareMathOperator{\rank}{{rank}}
\DeclareMathOperator{\red}{red}
\DeclareMathOperator{\reg}{reg}
\DeclareMathOperator{\Spec}{{Spec}}
\DeclareMathOperator{\Supp}{{Supp}}
\DeclareMathOperator{\Crys}{Crys}
\DeclareMathOperator{\coh}{coh}
\DeclareMathOperator{\Sh}{Sh}
\DeclareMathOperator{\QCoh}{QCoh}
\DeclareMathOperator{\crys}{crys}
\DeclareMathOperator{\Coh}{Coh}
\DeclareMathOperator{\ShHom}{\mathscr{H}\text{\kern -3pt {\calligra\large om}}\,}
\DeclareMathOperator{\sep}{sep}
\DeclareFontFamily{OT1}{pzc}{}
\DeclareFontShape{OT1}{pzc}{m}{it}{<-> s * [1.200] pzcmi7t}{}
\DeclareMathAlphabet{\mathpzc}{OT1}{pzc}{m}{it}
\renewcommand{\phi}{\varphi}
\newcommand{\factor}[2]{\left. \raise 2pt\hbox{\ensuremath{#1}} \right/
	\hskip -2pt\raise -2pt\hbox{\ensuremath{#2}}}
\renewcommand\subsection{
	\renewcommand{\sfdefault}{pag}
	\@startsection{subsection}%
	{2}{0pt}{.8\baselineskip}{.4\baselineskip}{\raggedright
		\sffamily\itshape\small\bfseries
}}
\renewcommand\section{
	\renewcommand{\sfdefault}{phv}
	\@startsection{section} %
	{1}{0pt}{\baselineskip}{.8\baselineskip}{\centering
		\sffamily
		\scshape
		\bfseries
}}
\definecolor{gr}{rgb}{0,0.5,0}
\newcommand{\Addresses}{{
		\bigskip
		\footnotesize
		
		\textsc{\'Ecole Polytechnique F\'ed\'erale de Lausanne, SB MATH CAG, MA C3 615 (B\^atiment MA), Station 8, CH-1015 Lausanne, Switzerland}\par\nopagebreak
		\textit{E-mail address}: \texttt{jefferson.baudin@epfl.ch}

}}
\setlist{  
	listparindent=\parindent,
	parsep=0pt,
}
\author[J.~Baudin]{Jefferson Baudin} 
\subjclass[2020]{14G17, 14J99, 14K15}
\keywords{Characterization of abelian varieties, generic vanishing, positive characteristic}
\title[Effective characterization of ordinary abelian varieties]{Effective characterization of ordinary abelian varieties, and beyond}
\begin{document}
	\begin{abstract}
		We prove that the Albanese morphism of any normal proper variety $X$ in positive characteristic satisfying $S^0(X, \omega_X) \neq 0$ and $P_2(X) = 1$ is surjective with connected fibers, and that $\Alb(X)$ is ordinary.
		
		We obtain from a variant of the above a purely positive characteristic proof of Chen and Hacon's effective birational characterization of complex abelian varieties (\cite{Chen_Hacon_Characterization_of_abelian_varieties}).
	\end{abstract}

	\maketitle
	
	\tableofcontents
\section{Introduction}


Chen and Hacon obtained in \cite{Chen_Hacon_Characterization_of_abelian_varieties} the following effective (and optimal) birational characterization of complex abelian varieties, in terms of the first Betti number and the first two plurigenera:

\begin{thm_blank*}
	A smooth complex projective variety if birational to an abelian variety if and only if $P_1(X) = P_2(X) = 1$ and $b_1(X) = 2\dim(X)$.
\end{thm_blank*}


%
%

Our objective is to prove a positive characteristic analogue of the above. This was achieved for surfaces in \cite{Ferrari_An_Enriques_classification_theorem_for_surfaces_in_positive_characteristic}, and a non--effective version was proved in arbitrary dimension in \cite{Hacon_Patakfalvi_Zhang_Bir_char_of_AVs}. Let us state our main result:

\begin{thm_letter}[{\autoref{main_thm_wo_ss_factor}, \autoref{main_thm_Zhang_technique}}]\label{intro:thm_A}
	A normal proper variety $X$ in characteristic $p > 0$ is birational to an ordinary abelian variety if and only if the following holds:
	\begin{itemize}
		\item $S^0(X, \omega_X) \neq 0$;
		\item $P_2(X) = 1$;
		\item $b_1(X) = 2\dim(X)$;
		\item $\alb_X$ is separable.
	\end{itemize}
	Furthermore, the fourth condition is automatic whenever $P_4(X) = 1$ or $P_p(X) \leq p - 1$.
\end{thm_letter}

\begin{rem_blank}
	Somewhat funnily, this shows that the optimal result when $p = 2$. We believe that separability of the Albanese morphism should follow from the condition $P_2(X) = 1$ for any value of $p > 0$.
\end{rem_blank}

As it turns out, we can relax a bit the ordinarity hypothesis above and prove the following:

\begin{thm_letter}[{\autoref{main_thm_wo_ss_factor}}]\label{intro:thm_B}
	Let $X$ be a normal proper variety of maximal Albanese dimension. Assume that $P_2(X) = 1$ and that $\Alb(X)$ has no supersingular factor. Then $\alb_X \colon X \to \Alb(X)$ is surjective, and the finite part of its Stein factorization is purely inseparable. 
	
	If furthermore $P_p(X) \leq p - 1$, then $X$ is birational to an abelian variety.
\end{thm_letter}

Thanks to \autoref{intro:thm_B} and a weak version of the ordinarity conjecture due to Pink (\cite{Pink_On_the_order_of_the_reduction_of_a_point_on_an_abelian_variety}), we obtain a purely positive characteristic proof of the following rephrasement of Chen and Hacon's theorem.

\begin{thm_letter}[{\autoref{Chen_Hacon_characterization}}]\label{intro_Chen_Hacon}
	Let $X$ be a normal proper complex variety of maximal Albanese dimension such that $P_2(X) = 1$. Then $X$ is birational to an abelian variety.
\end{thm_letter}

We can also generalize \autoref{intro:thm_A} for varieties which do not necessarily have maximal Albanese dimension. 

\begin{thm_letter}[{\autoref{main_thm_Delta_integral}}]\label{intro:general_case}
	Let $X$ be a normal proper variety and let $D \geq 0$ be a divisor on $X$. Assume that $S^0(X, D; \cO_X(K_X + D)) \neq 0$ and that $P_2(X, D) = 1$. Then $\alb_X$ is surjective with connected fibers and $\Alb(X)$ is ordinary.
	
	If furthermore $P_p(X, D) \leq p - 1$, then $\alb_X$ is a separable fibration.
\end{thm_letter}

\begin{rem_blank}
	\begin{itemize}
		\item By $P_r(X, D)$, we mean the quantity $h^0(X, \cO_X(r(K_X + D)))$.
		\item  When $D = 0$, the characteristic zero analogue of \autoref{intro:general_case} is present in \cite[Theorem 3.1]{Jiang_An_effective_version_of_a_thm_of_Kawamata_on_the_Albanese_map}, as a consequence of \autoref{intro_Chen_Hacon}, some Hodge theory and Kollár's decomposition theorem \cite{Kollar_Higher_Direct_Image_of_Dualizing_Sheaves_II}. We are not aware of any characteristic zero analogue when $D \neq 0$.
	\end{itemize} 
\end{rem_blank}

As a consequence of the proof of \autoref{intro:general_case}, we obtain the following version of Kawamata's theorem (\cite{Kawamata_Characterization_of_abelian_varieties}):

\begin{thm_letter}[{\autoref{main_thm_kod_dim_zero}, \autoref{main_thm_Hacon_Pat_Zhang}}]\label{intro:thm_E}
	Let $X$ be a normal proper variety, let $D \geq 0$ be a $\bZ_{(p)}$--divisor and assume that $\kappa_S(K_X + D) = 0$. Then $\alb_X \colon X \to \Alb(X)$ is a separable fibration and $\Alb(X)$ is ordinary.
\end{thm_letter}

\begin{rem_blank}
	We refer the reader to \autoref{def:kod_stable_kod_dim} for the definition of the Frobenius stable Kodaira dimension $\kappa_S$, and to \autoref{main_thm_Hacon_Pat_Zhang} for a slight extension of \autoref{intro:thm_E}, generalizing the main result of \cite{Hacon_Patakfalvi_Zhang_Bir_char_of_AVs}.
\end{rem_blank}

\subsection{Ideas of the proof}

The fundamental technique to prove Chen and Hacon's theorem is \emph{generic vanishing theory} (\cite{Green_Lazarsfeld_Generic_vanishing, Green_Lazarsfeld_GV_2}), which deals with cohomological properties of the sheaf $\omega_X$ (and its pushforward to $\Alb(X)$) and deduces geometric consequences from it. Although the direct positive characteristic retranscription of the generic vanishing theorem is known to fail (\cite{Filipazzi_GV_fails_in_pos_char, Hacon_Kovacs_GV_fails_in_pos_char}), the breakthrough of Hacon and Patakfalvi in \cite{Hacon_Pat_GV_Characterization_Ordinary_AV} was to remember that the sheaf $\omega_X$ carries the \emph{Cartier operator} (which is the Grothendieck dual of the $p$'th power map $\cO_X \to \cO_X$). They managed to prove that $\omega_X$ (or more precisely $\alb_{X, *}(\omega_X)$) satisfies generic vanishing \emph{up to nilpotence under the Cartier operator}. We refer the reader to \cite{Baudin_Positive_characteristic_generic_vanishing_theory} for more information on the precise meaning of this, and more generally for an introduction to positive characteristic generic vanishing theory. 

Let us now explain ideas from the first part of the proof of \autoref{intro:thm_B}. The approach we follow is mostly that of \cite{Pareschi_Basic_results_on_irr_vars_via_FM_methods}, so the main actor of the proof if the closed subset $V^0_{\injj}(\omega_X) \inc \Pic^0(X)$ (a version ``up to nilpotence'' of the usual cohomology support locus $V^0(\omega_X) = \set{\alpha \in \Pic^0(X)}{H^0(X, \omega_X \otimes \alpha) \neq 0}$). The proof decomposes in two steps: 
\begin{enumerate}
	\item \emph{Show that $V^0_{\injj}(\omega_X)$ is finite.} Here, we really follow the proof of \cite[Theorem 5.1]{Pareschi_Basic_results_on_irr_vars_via_FM_methods} (the key part being Lemma 4.2 from \emph{loc. cit.}). An immediate remark to make is that Pareschi's proof uses several fundamental theorems which are known to fail in positive characteristic (Grauert--Riemenschneider vanishing as well as Kollár's decomposition and torsion freeness theorems), even up to nilpotence (see \cite{Baudin_Bernasconi_Kawakami_Frobenius_GR_fails}). Fortunately, it turns out that in our very specific case, we can prove ``by hand'' that everything we need works. This is mostly made possible thanks to our recent ``categorification'' of generic vanishing (\cite[Theorem 3.2.1]{Baudin_Positive_characteristic_generic_vanishing_theory}).
	\item \emph{Deduce that the finite part of $\alb_X$ is purely inseparable.} Here, we diverge from the methods of \cite{Pareschi_Basic_results_on_irr_vars_via_FM_methods}. As in characteristic zero, we know at this point that $\alb_{X, *}(\omega_X)$ is a unipotent vector bundle (up to nilpotence), and the goal is to show that its rank is one (up to nilpotence). A characteristic zero approach is to use the trace map so deduce that any non--zero map $\alb_{X, *}(\omega_X) \to \cO_{\Alb(X)}$ splits (one can also use metrics as in \cite{Hacon_Popa_Schnell_Alg_fiber_spaces_over_abelian_varieties}), so $\alb_{X, *}(\omega_X)$ is trivial. Given that $P_1(X) = 1$, this forces $\alb_{X, *}(\omega_X) \cong \cO_{\Alb(X)}$, so $\alb_X$ is birational since it has rank one.
	
	Although trace maps do not behave well in positive characteristic, we can split maps as above via the use of a base change by a certain isogeny; the Verschiebung endomorphism. This is the approach taken in \cite{Hacon_Patakfalvi_Zhang_Bir_char_of_AVs}, and we replicate it here (up to certain technicalities). However, since we do an étale base change, we cannot bound $P_1(X)$ anymore. The way around is not to look at $P_1(X) = h^0(X, \omega_X)$, but at $h^{\dim(X)}(X, \omega_X)$, which is \emph{always} $1$. 
\end{enumerate}

\subsection{Acknowledgments}
I would like to thank Marta Benozzo, Fabio Bernasconi, Iacopo Brivio, Stefano Filipazzi, Lucas Gerth, János Kollár, Léo Navarro Chafloque, Zsolt Patakfalvi, Mihnea Popa, Sofia Tirabassi and Jakub Witaszek for discussions related to the content of this article. Financial support was provided by the grants \#200020B/192035 and \#2000201-231484 from the Swiss National Science Foundation, as well as the grant \#804334 from the European Research Council.

\subsection{Notations}
\begin{itemize}
	\item We fix a prime number $p > 0$. Unless stated otherwise, all rings and schemes in this paper are defined over $\bF_p$. Their absolute Frobenius is denoted by $F$. 
	\item A \emph{variety} is a separated and integral scheme of finite type over a field. 
	\item The symbol $A$ always denotes an abelian variety of dimension $g$ and $\bighat{A}$ denotes its dual.
	\item An \emph{isogeny} is a finite and surjective morphism of abelian varieties.
	\item A \emph{fibration} is a proper morphism $f \colon X \to Y$ between Noetherian schemes, such that $f_*\cO_X = \cO_Y$.
	\item Given a scheme $X$, its reduction is denoted $X_{\red}$. If $X$ is integral, then $K(X)$ denotes its fraction field.
	\item If $x \in X$ is any point, we write $i_x \colon \Spec k(x) \to X$ for the natural map.
	\item If $X$ is a normal proper variety, $D \geq 0$ is a divisor and $r \in \bZ$, we let $P_r(X, D) \coloneqq h^0(X, \cO_X(r(K_X + D))$.
	\item Given $f\colon X \to Y$ a morphism of schemes and let $y \in Y$. The fiber of $f$ at $y$ will be denoted $X_y$. If $L$ (resp. $D$) is a line bundle (resp. a Cartier divisor) on $X$, then we write $L_y \coloneqq L|_{X_y}$ (resp. $D_y \coloneqq D|_{X_y}$).
	\item Given a generically finite morphism $f \colon X \to Y$ of integral schemes, we set $\rank(f) \coloneqq [K(X) : K(Y)]$.
	\item Given $f \colon X \to Y$ a finite morphism between Noetherian schemes, we denote by $f^{\flat}$ the right adjoint of $f_* \colon \Coh_X \to \Coh_Y$. We also write $f^! \coloneqq Rf^{\flat}$ for its right derived functor (this is consistent with the usual notation $f^!$ for $f$ a separated finite type morphism between Noetherian schemes, see \stacksproj{0A9Y}). 
	\item Given a complex $C^{\bullet}$ with values in some abelian category and $i \in \bZ$, we denote by $\cH^i(C^{\bullet})$ its $i$--th cohomology object.
	\item If $\pi \colon X \to \Spec k$ is a separated scheme of finite type over a field, we set $\omega_X^{\bullet} \coloneqq \pi^!\cO_{\Spec k}$ (see \stacksprojs{0A9Y}{0ATZ}). We also write $\omega_X \coloneqq \cH^{-\dim X}(\omega_X^{\bullet})$.
	\item Whenever we refer to Serre duality in the context of Frobenius or Cartier modules, we are using \cite[Corollary 5.1.7]{Baudin_Duality_between_perverse_sheaves_and_Cartier_crystals}.
\end{itemize}


\section{Preliminaries} 


\subsection{Cartier crystals, Frobenius crystals and étale $\bF_p$-sheaves}
Here, we mostly recover facts from \cite{Baudin_Duality_between_perverse_sheaves_and_Cartier_crystals}.

\begin{defn}
	Let $X$ be a Noetherian, $F$--finite (i.e. the Frobenius $F \colon X \to X$ is finite) scheme over $\bF_p$, and let $s > 0$. 
	\begin{itemize}
		\item An $s$--\emph{Cartier module} (resp. $s$--Frobenius module) is a pair ($\cM$, $\theta$) where 
		$\cM$ is an coherent $\cO_X$--module and $\theta \colon F^s_*\cM \to \cM$ (resp. $\theta \colon \cM \to F^s_*\cM$) is a morphism. We call $\theta$ the \emph{structural morphism} of $\cM$. 
		
		\item A morphism of $s$--Cartier modules (resp. $s$--Frobenius modules) is a morphism of underlying $\cO_X$-modules commuting with the $s$--Cartier module (resp. $s$--Frobenius module) structures. The category Cartier modules (resp. $s$--Frobenius modules) is denoted $\Coh_X^{C^s}$ (resp. $\Coh_X^{F^s}$).
		
		\item An $s$--Cartier module (resp. $s$--Frobenius module) is said to be \emph{nilpotent} if its structural morphism is nilpotent. These objects form a Serre subcategory of $\Coh_X^{C^s}$ (resp. $\Coh_X^{F^s}$), and the corresponding quotient category (see \stacksproj{02MS}) is denoted by $\Crys_X^{C^s}$ (resp. $\Crys_X^{F^s}$). The objects of this quotient category are denoted $s$--Cartier crystals (resp. $s$--Frobenius crystals).
		
		\item To any $s$--Cartier module (resp. $s$--Frobenius module) $(\cM, \theta)$, there is an adjoint morphism $\theta^{\flat}\colon \cM \to F^{s, \flat}\cM$ (resp. $\theta^*\colon F^{s, *}\cM \to \cM$). We call this morphism the \emph{adjoint structural morphism}. If it is an isomorphism, then $(\cM, \theta)$ is said to be \emph{unit}.
%
	\end{itemize}	
\end{defn}

Fix $s > 0$. Throughout this section, we will work with $s$--Frobenius modules and $s$--Cartier modules.

\begin{rem}\label{pullback_of_Cartier_modules}
	\begin{enumerate}
		\item Since it usually does not cause any confusion, we shall simply write ``Cartier module'' and ``Frobenius module'', and omit the $s$ from the wording.
		
		\item\label{itm:Cartier_strucute_on_omega} Let $X$ be a variety over an $F$--finite field $k$ (say $f \colon X \to \Spec k$ is the structural morphism), and choose an isomorphism $\cO_{\Spec k} \to F^{s, !}\cO_{\Spec k}$. Applying $f^!$ to it gives an isomorphism $\omega_X^{\bullet} \to F^{s, !}\omega_X^{\bullet}$. Since $\cH^{-\dim X}(F^{s, !}\omega_X^{\bullet}) = F^{s, \flat}\omega_X$ (we have that $\cH^i(\omega_X^{\bullet}) = 0$ for all $i < -\dim X$), the sheaf $\omega_X$ carries the structure of a unit Cartier module.	
%
%
%
		
		\item\label{itm:tensor_product_of_Cartier_module_and_unit_F_module} If $\cM$ is a Cartier module and $\cN$ is a unit Frobenius module, one can naturally endow $\cM \otimes \cN$ the structure of a Cartier module via the composition \[ F^s_*(\cM \otimes \cN) \to F^s_*(\cM \otimes F^{s, *}\cN) \to F^s_*\cM \otimes \cN \to \cM \otimes \cN. \]
		
		An important case of unit Frobenius modules are $(p^s - 1)$-torsion line bundles. Indeed, if $\alpha$ is a line bundle such that $\alpha^{p^s - 1}$ is trivial, then $F^{s, *}\alpha \cong \alpha^{p^s} \cong \alpha$.
		
		
	\end{enumerate}
\end{rem}

\begin{notation}\label{notation_crystals}
	\begin{enumerate}
		\item A morphism in the category of Cartier crystals (resp. Forbenius crystals) will be written with dashed arrows, i.e. $\cM \dashrightarrow \cN$. If $\cM$ and $\cN$ are two isomorphic objects in this category, we shall write $\cM \sim_C \cN$ (resp. $\cM \sim_F \cN$).
		\item\label{itm:abuse_iterates_structure_map} Let $(\cM, \theta)$ be a Cartier module. We will abuse notations as follows: \[ \theta^e \coloneqq \theta \circ F^s_*\theta \circ \dots \circ F^{(e-1)s}_*\theta \colon F^{es}_*\cM \to \cM \] (and the analogous abuse of notations for Frobenius modules).
		\item\label{itm:notation H^0_ss} Given a proper scheme $X$ over an $F$-finite field $k$, a line bundle $L$ and a Cartier module $(\cM, \theta)$ on $X$, we set \[ H^i_{ss, \theta}(X, \cM \otimes L) \coloneqq \bigcap_{e > 0} \im\left(H^i(X, F^{es}_*\cM \otimes L) \xto{\theta^e} H^i(X, \cM \otimes L)\right) \inc H^i(X, \cM \otimes L). \] We also write $h^i_{ss, \theta}(X, \cM \otimes L) \coloneqq \dim_k H^i_{ss, \theta}(X, \cM \otimes L)$. Whenever the context is clear, we shall remove the $\theta$ from this notation (often times, we will have $L = \cO_X$).
	\end{enumerate}
\end{notation}

\begin{rem}\label{easy_semistable_when_k_perfect}
	Note that if $k$ is perfect, then $H^i_{ss}(X, \cM \otimes L) \cong \lim H^i(X, F^{es}_*\cM \otimes L)$.
\end{rem}

\begin{example}
	If $\cM$ is any Cartier module, then its structural morphism is nilpotent if and only if $\cM \sim_C 0$.
\end{example}

Either using the definitions or their adjoint versions, we see that on Frobenius modules/crystals, we can take pushforward and pullbacks. For Cartier modules/crystals, we can take pushforwards and its right adjoint for finite morphisms (see also \cite[Section 5.1]{Baudin_Duality_between_perverse_sheaves_and_Cartier_crystals} for general morphisms).

\begin{defn}
	Let $\cM$ be a Cartier module. We define \[ \Supp_{\crys}(\cM) = \set{x \in X}{\cM_x \not\sim_C 0} \inc X.\] 
\end{defn}

\begin{rem}\label{rem_support_crystal}
	 Note that the above is a closed subset (\cite[Lemma 3.3.12]{Baudin_Duality_between_perverse_sheaves_and_Cartier_crystals}), and it only depends on the underlying crystal.
\end{rem}

\begin{lem}\label{universal_homeo_induce_equivalence_of_Cartier_crystals}
	Let $f \colon X \to Y$ be a finite morphism between Noetherian $F$--finite schemes, and suppose that $f$ is a universal homeomorphism. Then the functor $f_* \colon \Crys_X^{C^s} \to \Crys_Y^{C^s}$ is an equivalence of categories, with inverse $f^{\flat}$.
	
	If $Y$ is furthermore separated and of finite type over an $F$--finite field $k$, then the natural morphism $f_*\omega_X \to \omega_Y$ of Cartier modules is an isomorphism of crystals.
\end{lem}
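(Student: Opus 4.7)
The plan is to exploit that a finite universal homeomorphism in characteristic $p$ is, up to nilpotent thickenings, a power of Frobenius, and to combine this with the fact that Frobenius pushforward becomes the identity on Cartier crystals. As a first step, I would reduce to the case where $X$ and $Y$ are both reduced. The closed immersions $X_{\red} \hookrightarrow X$ and $Y_{\red} \hookrightarrow Y$ are themselves finite universal homeomorphisms, and I would argue that pushforward along them is an equivalence of Cartier crystal categories. The essential surjectivity comes from the identity $\theta^e(f^{p^{es}}x) = f\theta^e(x)$ (proved by iterating the $\cO_X$-linearity of $\theta\colon F^s_*\cM\to\cM$): any local section $n$ of the nilpotent ideal $\cN \subset \cO_X$ satisfies $n\cdot \theta^e(\cM)=0$ once $p^{es}$ exceeds its nilpotence order, so by coherence the stable image $\cM_\infty \coloneqq \bigcap_e \theta^e(F^{es}_*\cM)$ is a Cartier submodule supported on $X_{\red}$ whose quotient $\cM/\cM_\infty$ is Cartier-nilpotent.

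Next, for $X,Y$ reduced, I would construct $g\colon Y\to X$ with $f\circ g = F^n_Y$ and $g\circ f = F^n_X$ for some $n>0$. Locally $f$ corresponds to an integral, purely inseparable extension $A\hookrightarrow B$ of reduced $\bF_p$-algebras, and finiteness gives a uniform $n$ with $B^{p^n}\subseteq A$. The ring map $B\to A$, $b\mapsto b^{p^n}$, defines $g$ locally, whose two compositions with $f$ literally recover the $p^n$-th power maps on $A$ and on $B$; uniqueness (since $f$ is a monomorphism, being radicial) glues these local constructions into a global $g$.

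The key technical step is then that $F^n_{X,*}$ is naturally isomorphic to $\id$ on $\Crys^{C^s}_X$, and likewise for $Y$. Writing $n=es$, the iterated structural map $\theta^e\colon F^n_*\cM\to \cM$ is a morphism of $s$-Cartier modules (the compatibility square reduces to the tautology $\theta^{e+1}=\theta^{e+1}$). Its kernel inherits from $F^n_*\cM$ the structural map $\theta$ and is literally annihilated by $\theta^e$, hence Cartier-nilpotent; its cokernel $\cM/\theta^e(F^n_*\cM)$ carries an induced structural map whose $e$-th iterate lands in $\theta^e(F^n_*\cM)$ and so vanishes in the quotient, hence is also nilpotent. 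Therefore $\theta^e$ is a crystal isomorphism, naturally in $\cM$. Combining with the factorization gives $f_*g_* \cong F^n_{Y,*}\cong \id$ and $g_*f_* \cong F^n_{X,*}\cong\id$ on crystals, so $f_*$ is an equivalence of categories with quasi-inverse $g_*$; uniqueness of adjoints then identifies $g_* \cong f^\flat$, completing the first assertion.

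For the second part, since $f$ is finite between schemes of equal dimension (the latter because $f$ is a universal homeomorphism), applying $\cH^{-\dim X}$ to the Grothendieck duality identity $f^!\omega_Y^\bullet \cong \omega_X^\bullet$ yields $f^\flat\omega_Y\cong \omega_X$; the stated natural morphism $f_*\omega_X\to \omega_Y$ is then the counit at $\omega_Y$ of the $(f_*,f^\flat)$ adjunction, which is an isomorphism in $\Crys^{C^s}_Y$ by the first part. The main obstacle I anticipate is the reduction to the reduced case, since the equivalence $\Crys^{C^s}_X \simeq \Crys^{C^s}_{X_{\red}}$ must be established at the level of crystal categories rather than the abelian categories of Cartier modules, and the subsequent Frobenius-factorization argument on the reductions must then be shown to transport cleanly back to a statement about $f_*$ and $f^\flat$ on $X$ and $Y$ themselves.
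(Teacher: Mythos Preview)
Your argument is correct and is considerably more hands-on than the paper's own proof, which simply cites \cite[Corollary~5.2.8]{Baudin_Duality_between_perverse_sheaves_and_Cartier_crystals} for the equivalence and then \cite[Corollaries~5.1.4 and~5.2.8]{Baudin_Duality_between_perverse_sheaves_and_Cartier_crystals} for the statement about $f_*\omega_X^\bullet \to \omega_Y^\bullet$ at the derived level, passing to cohomology sheaves at the end. The mechanism you isolate---factor a high power of Frobenius through $f$ and use that $\theta^e \colon F^{es}_*\cM \to \cM$ is a natural isomorphism in $\Crys^{C^s}$---is exactly what underlies the cited result, so the two proofs are morally the same; yours is simply self-contained at the abelian level while the paper outsources to a derived framework.

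Two small corrections are worth flagging. First, your parenthetical ``since $f$ is a monomorphism, being radicial'' is not right: radicial does not imply monomorphism (take $\Spec k[t^{1/p}] \to \Spec k[t]$ and test against $\Spec k[\epsilon]/(\epsilon^p)$). What actually makes the gluing of your local $g$'s work in the reduced case is that the ring map $A \hookrightarrow B$ is injective, so the local rule $b \mapsto b^{p^n} \in A$ is canonical and the patches agree on overlaps automatically. Second, the appeal to ``uniqueness of adjoints'' to identify $g_* \cong f^\flat$ tacitly uses that the module-level adjunction $(f_*, f^\flat)$ descends to the crystal categories; since $f^\flat$ need not be exact (e.g.\ when $f_*\cO_X$ is not locally free), this deserves a word. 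One clean route is to verify directly that the counit $f_* f^\flat \to \id$ is a nil-isomorphism, which follows once you also establish $(F^{es})^\flat \cong \id$ on crystals via the iterated adjoint structural map $(\theta^\flat)^e$; the paper's derived setup avoids this wrinkle entirely.
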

\begin{rem}
	Purely inseparable morphisms of normal schemes or reduction maps are examples of universal homeomorphisms (see \stacksproj{0CNF}).
\end{rem}
\begin{proof}
	The fact that $f_*$ is an equivalence of categories follows from \cite[Corollary 5.2.8]{Baudin_Duality_between_perverse_sheaves_and_Cartier_crystals}. Furthermore, we know by \cite[Corollaries 5.1.4 and 5.2.8]{Baudin_Duality_between_perverse_sheaves_and_Cartier_crystals} that both $\omega_X^{\bullet}$ and $\omega_Y^{\bullet}$ can naturally be made into complexes of unit Cartier modules, and the morphism $f_*\omega_X^{\bullet} \to \omega_Y^{\bullet}$ is an isomorphism in $D(\Crys_Y^{C^s})$. Taking cohomology sheaves gives the result.
\end{proof}

%

\begin{defn}\label{def_rank_Fp_sheaf}
	Let $X$ be an irreducible, Noetherian and $F$--finite $\bF_{p^s}$--scheme.
	\begin{itemize}
		\item If $\cF$ is a constructible étale $\bF_{p^s}$-sheaf, then by definition there exists an étale open $U \to X$ such that $\cF|_U \cong \bF_{p^s, U}^{\oplus n}$. We define the \emph{rank} of $\cF$ to be $\rank(\cF) \coloneqq n$ (this makes sense since $X$ is irreducible).
		\item We define the rank of $\cF^{\bullet} \in D^b_c(X_{\et}, \bF_{p^s})$ to be \[ \rank(\cF^{\bullet}) \coloneqq \max_{i \in \bZ} \{ \rank(\cH^i(\cF^{\bullet})) \}. \]
		\item We define the \emph{rank} of a complex of Cartier crystals (resp. Frobenius--crystals) $\cM^{\bullet}$ to be the rank of the associated complex of constructible étale $\bF_{p^s}$-sheaves under the duality functor from \cite[Theorem 5.2.7]{Baudin_Duality_between_perverse_sheaves_and_Cartier_crystals} (we can apply this theorem affine locally around the generic point by \cite[Corollary 5.1.15]{Baudin_Duality_between_perverse_sheaves_and_Cartier_crystals}). We denote this quantity by $\rank_{\crys}(\cM^{\bullet})$.
	\end{itemize}
\end{defn}

\begin{rem}\label{computation_rank_integral_schemes}
	Let $\eta$ denote the generic point of $X$, and let $\eta^{\sep}$ denote a separable closure of $\eta$.
	\begin{itemize}
		\item For any $\cF \in \Sh_c(X_{\et}, \bF_{p^s})$, \[ \rank(\cF) = \dim_{\bF_{p^s}}\cF_{\eta^{\sep}}. \]
		\item The definition of the rank of a Cartier crystal does not depend on the chosen unit dualizing complex used to define the duality functor from \emph{loc. cit.}. Indeed, the rank of a Cartier crystal $\cM$ is the dimension of the unique unit Cartier module $N$ on $k(\eta)$ such that $\cM_{\eta} \sim_C N$. \\
		
		Let us show this. First, note that by \cite[Corollary 3.4.15]{Baudin_Duality_between_perverse_sheaves_and_Cartier_crystals}, two unit Cartier modules which are isomorphic as crystals are also isomorphic as Cartier modules. Furthermore, it is a straightforward computation to see that over a field, the duality functor (see \cite[Theorem 5.2.7]{Baudin_Duality_between_perverse_sheaves_and_Cartier_crystals}) exchanges unit Cartier modules and unit Frobenius modules. Since it also preserves the rank by definition, we reduce to the case of Frobenius modules. We can therefore base change and assume that $k(\eta)$ is algebraically closed. We then conclude by \cite[Corollary p.143]{Mumford_Abelian_Varieties}.
	\end{itemize}
\end{rem}

The following result will be crucial at the end of the proof of our main theorems.

\begin{lemma}\label{rank_of_pushforward}
	Let $f \colon X \to Y$ be a generically finite dominant morphism of irreducible, Noetherian and $F$--finite $\bF_{p^s}$--schemes. Then for any $\cM^{\bullet} \in D^b(\Crys_X^{C^s})$, \[ \rank_{\crys}(Rf_*\cM^{\bullet}) = [K(X_{\red}) : K(Y_{\red})]_{\sep}\rank_{\crys}(\cM^{\bullet}). \]
\end{lemma}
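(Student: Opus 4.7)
The plan is to reduce the statement to a computation at the generic point of $Y$ and then transport it via the duality of \cite[Theorem 5.2.7]{Baudin_Duality_between_perverse_sheaves_and_Cartier_crystals} to an elementary fact about pushforwards of constructible étale $\mathbb{F}_{p^s}$-sheaves. First, by \autoref{universal_homeo_induce_equivalence_of_Cartier_crystals} applied to the reduction morphisms $X_{\red} \to X$ and $Y_{\red} \to Y$, I may replace $X$ and $Y$ by their reductions and assume both are integral; this changes neither side of the equality. Since $f$ is generically finite, I can shrink $Y$ to an affine open containing $\eta_Y$ over which $f$ is finite, so that $Rf_* = f_*$ is exact on Cartier modules; then the problem reduces, by taking maxima over cohomology sheaves, to the case of a single Cartier crystal $\cM$.

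By \autoref{computation_rank_integral_schemes}, the rank of a Cartier crystal on an integral scheme is intrinsic to its generic stalk. Restricting to $\eta_Y$, the induced morphism becomes the morphism of spectra of fields $g \colon \Spec K(X) \to \Spec K(Y)$ attached to the finite field extension $K(Y) \hookrightarrow K(X)$. Via the duality functor, $\cM_{\eta_X}$ corresponds to a constructible étale $\mathbb{F}_{p^s}$-sheaf $\cF$ on $\Spec K(X)$ of the same rank, and $g_*\cM_{\eta_X}$ corresponds to $g_*\cF$ on $\Spec K(Y)$ (the compatibility of the duality with $g_*$ is the one non-trivial check; it is expected since both sides are constructed as right adjoints to the respective pullbacks). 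It therefore remains to prove $\rank(g_*\cF) = [K(X):K(Y)]_{\sep}\cdot \rank(\cF)$.

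This last étale computation is standard: the stalk of $g_*\cF$ at a geometric generic point $\overline{\eta_Y}$ of $\Spec K(Y)$ is the direct sum of the stalks of $\cF$ at the geometric points of $\Spec K(X)$ lying above it, and the number of such points equals the number of geometric points of $\Spec(K(X) \otimes_{K(Y)} \overline{K(Y)})$. This tensor product decomposes as a product of $[K(X):K(Y)]_{\sep}$ local Artin algebras with residue field $\overline{K(Y)}$, and since the étale site ignores the nilpotent structure we obtain exactly $[K(X):K(Y)]_{\sep}$ geometric points. The main obstacle I anticipate is the aforementioned compatibility of the duality functor with $g_*$ along the finite morphism $g$; once that is settled, the rest is classical field theory combined with the standard stalk formula for direct images of étale sheaves along finite morphisms.
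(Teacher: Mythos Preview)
Your proposal is correct and follows essentially the same route as the paper. The only difference is the order of operations: the paper invokes the duality with constructible \'etale $\bF_{p^s}$-sheaves at the very start (the sentence ``by definition'' hides exactly the compatibility of the duality functor with $Rf_*$ that you flag as your main obstacle), and then carries out all the reductions---to reduced schemes, to finite $f$, to sheaves, to spectra of fields, and finally to a separable extension---entirely on the \'etale side. You instead perform the reductions on the Cartier-crystal side and apply the duality only at the level of fields. Both arguments rest on the same compatibility (which is indeed part of the six-functor package in \cite{Baudin_Duality_between_perverse_sheaves_and_Cartier_crystals}); the paper's ordering has the minor advantage that the reduction to reduced schemes and the passage through the inseparable part of the extension become the trivial statement that universal homeomorphisms do not change the \'etale topos, rather than requiring \autoref{universal_homeo_induce_equivalence_of_Cartier_crystals}. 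Your final stalk computation and the paper's explicit trivialization via $L \otimes_{K(Y)} K(X) \cong L^{[K(X):K(Y)]}$ are of course two phrasings of the same fact.
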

\begin{proof}
	By definition, it is enough to show the analogous statement for complexes of constructible $\bF_{p^s}$--sheaves $\cF^{\bullet}$. By \stacksprojs{03SI}{0CNF}, we may assume that $X$ and $Y$ are reduced. Since this is a local question on $Y$, we may further assume that $f$ is finite. In particular, $Rf_* = f_*$, so by definition it is enough to show the result for $\bF_{p^s}$-sheaves (and not any complex). By \autoref{computation_rank_integral_schemes}, we may further assume that $X = \Spec K(X)$ and $Y = \Spec K(Y)$. Since inseparable extensions induce universal homeomorphisms (see \stacksproj{0CNF}), we may assume $K(Y) \inc K(X)$ is separable. Now, let $\cF$ be an étale $\bF_{p^s}$-sheaf on $\Spec(K(Y))$, and let $L$ be a separable field extension of $K(Y)$ such that $\cF|_{\Spec L} \cong \bF_{p^s}^{\oplus n}$. Up to making $L$ bigger, we may assume that $L$ contains a Galois closure of $K(Y) \inc K(X)$. We then obtain that
	\begin{align*} 
		H^0_{\et}(\Spec L, f_*\cF|_{\Spec L}) & = H^0_{\et}(\Spec (L \otimes_{K(Y)} K(X)), \cF|_{\Spec (L \otimes_{K(X)} K(Y))})
		\\ & \expl{\cong}{$L \otimes_{K(Y)} K(X) \cong L^{\times [K(X) : K(Y)]}$} H^0_{\et}(\Spec L, \cF|_{\Spec L})^{\oplus [K(X) : K(Y)]} \\
		& \cong \bF_{p^s}^{\oplus n\cdot [K(Y) : K(X)]}. \qedhere
	\end{align*}
\end{proof}

\subsection{Frobenius stable Kodaira dimension}

Throughout this section, $k$ denotes a fixed $F$--finite field. As in \autoref{pullback_of_Cartier_modules}.\autoref{itm:Cartier_strucute_on_omega}, we further choose an isomorphism $\cO_{\Spec k} \to F^!\cO_{\Spec k}$, so that for any $s > 0$, any separated and finite type $k$--scheme has the structure of a unit $s$--Cartier module on its dualizing module. We also fix a normal proper variety over $k$, an effective $\bZ_{(p)}$-divisor $\Delta$ on $X$, and $\delta \in \{\pm 1\}$. Recall the following definition:
\begin{defn}\label{def:kod_stable_kod_dim}
	Let $\cM$ be a reflexive sheaf on $X$.
	\begin{itemize}
		\item Define $S^0(X, \Delta; \cM) \inc H^0(X, \cM)$ to be \[ \bigcap_{e \gg 0} \im\left(\Tr^e_{\Delta} \colon H^0\left(X, \left(F^e_*\cO_X((1 - p^e)(K_X + \Delta)) \otimes \cM\right)^{\vee\vee} \right) \to H^0(X, \cM)\right), \]
		where $\Tr^e_{\Delta}$ is induced by the Grothendieck trace map \[ \begin{tikzcd}
			F^e_*\cO_X((1 - p^e)(K_X + \Delta)) \arrow[rr, "\inc"] &  & F^e_*\cO_X((1 - p^e)K_X) \arrow[rr] &  & \cO_X
		\end{tikzcd} \] by twisting with $\cM$ and reflexifying. Note that $\Tr^e_{\Delta}$ is an abuse of notation as in 	\autoref{notation_crystals}.\autoref{itm:abuse_iterates_structure_map}.
		\item If $D$ is a $\bQ$-divisor on $X$ and $S^0(X, \Delta; \cO_X(nD)) \neq 0$ for some $n > 0$, we set \[ \kappa_S(X, \Delta; D) \coloneqq \min\set{j \geq 0}{\dim S^0(X, \Delta; \cO_X(mD)) = O(m^j) \mbox{ for $m > 0$ sufficiently divisible} }. \]  Otherwise, we set $\kappa_S(X, \Delta; D) = -\infty$. When $D = \delta(K_X + \Delta)$, we will simply write $\kappa_S(\delta(K_X + \Delta)) \coloneqq \kappa_S(X, \Delta; \delta(K_X + \Delta))$.
	\end{itemize}
\end{defn}

\begin{rem}
	\begin{itemize}
		\item To give an idea, $S^0(X, \Delta; \cO_X) \neq 0$ if and only if the pair $(X, \Delta)$ is globally sharply $F$--split (see \cite[Definition 3.1]{Schwede_Smith_Globally_F_regular_and_log_Fano_varieties}).
		\item In the literature, $S^0(X, \Delta; \cO_X(D))$ is also often denoted $S^0(X, \sigma(X, \Delta) \otimes \cO_X(D))$.
	\end{itemize}
\end{rem}

\begin{lem}\label{K_S(X)_not_-infty_implies_K_S(X)_equals_K(X)}
	Let $D$ be a divisor on $X$. Then the subset \[ S(X, \Delta; \cO_X(D)) \coloneqq \bigoplus_{m \geq 0}S^0(X, \Delta; \cO_X(mD)) \inc \bigoplus_{m \geq 0} H^0(X, \cO_X(mD)) \eqqcolon R(X, \cO_X(D)) \] is an ideal of $R(X, \cO_X(D))$. In particular, if $\kappa_{S}(X, \Delta; D) \neq -\infty$, then $\kappa_{S}(X, \Delta; D) = \kappa(X, D)$.
\end{lem}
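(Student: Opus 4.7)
The plan is to prove the ideal property first, since the Kodaira dimension statement follows from it by a standard injectivity argument. For the ideal property, the key observation is that the Grothendieck trace map is $\cO_X$-linear and natural, so multiplication by a section should pass through the definition of $S^0$.

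Concretely, fix $s \in H^0(X, \cO_X(mD))$ and $t \in S^0(X, \Delta; \cO_X(nD))$. Multiplication by $s$ defines a morphism $\cO_X(nD) \to \cO_X((m + n)D)$ of reflexive sheaves on $X$. Tensoring with $F^e_*\cO_X((1 - p^e)(K_X + \Delta))$ and reflexifying, I obtain a commutative diagram
\[
\begin{tikzcd}
\left(F^e_*\cO_X((1 - p^e)(K_X + \Delta)) \otimes \cO_X(nD)\right)^{\vee\vee} \arrow[r, "\Tr^e_{\Delta}"] \arrow[d, "\cdot s"] & \cO_X(nD) \arrow[d, "\cdot s"] \\
\left(F^e_*\cO_X((1 - p^e)(K_X + \Delta)) \otimes \cO_X((m + n)D)\right)^{\vee\vee} \arrow[r, "\Tr^e_{\Delta}"] & \cO_X((m + n)D),
\end{tikzcd}
\]
where commutativity holds on the regular locus of $X$ by the $\cO_X$-linearity of the trace, and extends uniquely by normality since all sheaves involved are reflexive. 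Taking global sections, we see that if $t$ lies in the image of the top trace for every $e \gg 0$, then $s \cdot t$ lies in the image of the bottom trace for every $e \gg 0$. Intersecting over $e$ gives $s \cdot t \in S^0(X, \Delta; \cO_X((m + n)D))$, which is exactly the ideal property.

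For the "in particular" statement, the inequality $\kappa_S(X, \Delta; D) \leq \kappa(X, D)$ is immediate from the inclusion $S^0 \subseteq H^0$. For the reverse inequality, assume $\kappa_S(X, \Delta; D) \neq -\infty$, pick $n_0 > 0$ and a nonzero $s_0 \in S^0(X, \Delta; \cO_X(n_0 D))$. By the ideal property, multiplication by $s_0$ sends $H^0(X, \cO_X(mD))$ into $S^0(X, \Delta; \cO_X((m + n_0)D))$. Since $X$ is integral and $s_0 \neq 0$, this multiplication is injective, so
\[
\dim S^0(X, \Delta; \cO_X((m + n_0)D)) \geq \dim H^0(X, \cO_X(mD))
\]
for all $m \geq 0$. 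Taking $m$ sufficiently divisible, the right-hand side grows like $m^{\kappa(X, D)}$, which forces $\kappa_S(X, \Delta; D) \geq \kappa(X, D)$ and concludes the proof.

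The main (minor) obstacle is justifying the commutativity of the square above despite the reflexifications. I expect this to be routine: on the regular locus $X^\circ \subseteq X$ (whose complement has codimension $\geq 2$ since $X$ is normal) all sheaves are locally free, commutativity follows from naturality of the trace, and both sheaves on the left are reflexive so the morphism extends uniquely to $X$.
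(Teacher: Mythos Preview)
Your proof is correct and is exactly the standard argument; the paper itself does not spell it out but simply cites \cite[Lemmas 4.1.1 and 4.1.3]{Hacon_Pat_GV_Characterization_Ordinary_AV}, whose proof is what you have written. Your handling of the reflexification issue on the regular locus is the right way to justify the commutative square.
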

\begin{proof}
	The proof is identical to that of \cite[Lemmas 4.1.1 and 4.1.3]{Hacon_Pat_GV_Characterization_Ordinary_AV}.
\end{proof}

Let us define a Cartier module structure on $\cO_X(\delta m(K_X + \Delta))$, whose space of stable sections is a subspace of $S^0(X, \Delta; \cO_X(\delta m(K_X + \Delta)))$.

\begin{defn}\label{Cartier_structure_on_pluricanonical_bundles}
	Fix an element $t \in H^0(X, \cO_X(\delta r(K_X + \Delta)))$ for some $r > 0$ coprime to $p$ such that $r(K_X + \Delta)$ is integral. Let $s > 0$ be an integer such that $r$ divides $p^s - 1$. For any $m > 0$ such that $m(K_X + \Delta)$ is integral, define $\theta_t \colon F^s_*\cO_X(\delta m(K_X + \Delta)) \to \cO_X(\delta  m(K_X + \Delta))$ by the composition
	\[  \begin{tikzcd}
		F^s_*\cO_X(\delta m(K_X + \Delta)) \arrow[rr, "F^s_*t^{(m - \delta )\frac{(p^s - 1)}{r}}"] &  & F^s_*\cO_X((1 - p^s + \delta  p^sm)(K_X + \Delta)) \arrow[r, "\Tr^s_{\Delta}"] &  \cO_X(\delta m(K_X + \Delta)),
	\end{tikzcd} \] where we recall that $\Tr^s_{\Delta}$ is the twist by $\cO_X(\delta m(K_X + \Delta))$ of the composition 	
	\[  \begin{tikzcd}
		F^s_*\cO_X((1 - p^s)(K_X + \Delta)) \arrow[rr, "\inc"] &  & 	F^s_*\cO_X((1 - p^s)K_X) \arrow[r, "\Tr^s"] &  \cO_X.
	\end{tikzcd} \]
	A straightforward computation shows that \[ H^0_{ss, \theta_t}(X, \cO_X(\delta m(K_X + \Delta))) \inc S^0(X, \Delta; \cO_X(\delta m(K_X + \Delta))). \]
\end{defn}

We conclude this subsection with a useful result from \cite{Hacon_Pat_GV_Characterization_Ordinary_AV}. We set $D = \delta (K_X + \Delta)$, and we fix integers $r$, $s$, $m$ and a section $0 \neq t \in H^0(X, \cO_X(rD))$ as in \autoref{Cartier_structure_on_pluricanonical_bundles}.

\begin{lemma}\label{basic_props_var_kod_zero}
	Assume that $\kappa_{S}(X, \Delta; D) = 0$. Then \[H^0_{ss, \theta_t}(X, \cO_X(mD)) = S^0(X, \Delta; \cO_X(mD)) = H^0(X, \cO_X(mD)). \]
\end{lemma}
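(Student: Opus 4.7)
The three spaces satisfy the containment chain
\[
H^0_{ss,\theta_t}(X,\cO_X(mD)) \ \subseteq\ S^0(X,\Delta;\cO_X(mD)) \ \subseteq\ H^0(X,\cO_X(mD)),
\]
with the first inclusion being part of \autoref{Cartier_structure_on_pluricanonical_bundles}. Starting from the hypothesis $\kappa_S(X,\Delta;D)=0$, I would first invoke \autoref{K_S(X)_not_-infty_implies_K_S(X)_equals_K(X)} to conclude that $\kappa(X,D)=0$, and then deduce $h^0(X,\cO_X(m'D))\leq 1$ for every $m'\geq 0$ via the standard observation that two linearly independent global sections in some degree would induce a rational map to $\bP^1$ realising positive Iitaka dimension. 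In particular, all three spaces are at most one-dimensional, and proving the lemma reduces to showing that $H^0_{ss,\theta_t}$ is nonzero whenever $H^0(X,\cO_X(mD))\neq 0$.

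Next I would identify $H^0_{ss,\theta_t}(X,\cO_X(mD))$ with $S^0(X,\Delta;\cO_X(mD))$. The map $\theta_t^e$ factors as the $\cO_X$-linear multiplication by the nonzero section $t^{(p^{es}-1)(m-\delta)/r}$, followed by $\Tr^{es}_\Delta$. Because we work in the graded domain $R(X,\cO_X(D))$, this multiplication is a nonzero $k$-linear injection whose source and target have dimension at most one; consequently it is an isomorphism. Therefore $\im\theta_t^e = \im\Tr^{es}_\Delta$ inside the one-dimensional space $H^0(X,\cO_X(mD))$. Since the chain $\{\im\Tr^e_\Delta\}_{e\geq 1}$ is descending, intersecting over the cofinal subsequence $e\in s\bZ_{\geq 1}$ yields the same limit, giving the desired equality.

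Finally, for the equality $S^0(X,\Delta;\cO_X(mD)) = H^0(X,\cO_X(mD))$: let $\sigma$ generate the right-hand side, and fix a nonzero $a\in S^0(X,\Delta;\cO_X(n_0D))$ (which exists because $\kappa_S\neq-\infty$). By the ideal property from \autoref{K_S(X)_not_-infty_implies_K_S(X)_equals_K(X)}, the product $a\cdot\sigma^{p^e}$ lies in $S^0(X,\Delta;\cO_X((n_0+p^em)D))$ and is nonzero since $R(X,\cO_X(D))$ is a domain; this shows $S^0$ is nonzero in every sufficiently large degree reachable from $n_0$ inside the support semigroup. One then uses the projection formula $\Tr^{e'}_\Delta(F^{e',*}\sigma\cdot\tau) = \sigma\cdot\Tr^{e'}_\Delta(\tau)$, combined with $\dim h^0\leq 1$ in every graded piece, to descend the non-vanishing of $S^0$ from such a high degree back to the prescribed degree $m$, concluding $\sigma\in S^0(X,\Delta;\cO_X(mD))$.

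The main technical difficulty lies in this last step: the ideal property naturally propagates $S^0\neq 0$ only upward in degree, to $n_0 + M$ inside the support semigroup $M\subseteq\bZ_{\geq 0}$, whereas the lemma requires non-vanishing in the given degree $m$. Overcoming this obstacle requires carefully exploiting the compatibility between the Cartier trace and Frobenius pullback encoded in the projection formula, together with the one-dimensionality of each graded component, in order to transport membership in $S^0$ between compatible twists of $\cO_X(mD)$.
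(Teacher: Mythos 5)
Your first two steps coincide with the paper's: the reduction to $\kappa(X,D)=0$ via \autoref{K_S(X)_not_-infty_implies_K_S(X)_equals_K(X)}, the bound $h^0\leq 1$ in every degree, and the identification $H^0_{ss,\theta_t}(X,\cO_X(mD)) = S^0(X,\Delta;\cO_X(mD))$ (the paper likewise observes that multiplication by $t^{(m-\delta)(p^{es}-1)/r}$ is injective on global sections, hence an equality of spaces of dimension at most one). The gap is in your final step, which is where the entire content of the lemma lives: you assert that one ``descends'' the non-vanishing of $S^0$ from a high degree $n_0+p^em$ back to degree $m$ using the projection formula and one-dimensionality, and then you explicitly concede that this descent is the main difficulty --- without carrying it out. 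As written, the step does not close: the projection formula $\Tr^{e}_\Delta(F^{e}_*(\tau\cdot\sigma^{p^{e}})) = \Tr^{e}_\Delta(F^{e}_*\tau)\cdot\sigma$ only factors out honest $p^{e}$-th powers of sections, so to descend you must exhibit the generator of the one-dimensional source space in the high degree as an explicit product of a section lying in the source degree for $m$ with a genuine $p^{es}$-th power, and it is exactly this degree bookkeeping that your proposal omits.

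The paper closes the step by an arithmetic trick absent from your sketch: from $S^0(X,\Delta;\cO_X(l'mD))\neq 0$, surjectivity of the trace onto $S^0$ produces $l''=\delta(1-p^{es})+p^{es}l'm$ with $S^0$ non-vanishing in degree $l''$ and $l''\equiv\delta \pmod{p^{es}}$; squaring gives $l=(l'')^2\equiv 1 \pmod{p^{es}}$, which is precisely what legitimizes the factorization $t^{(lm-\delta)\frac{p^{es}-1}{r}}f^l = \bigl(t^{(m-\delta)\frac{p^{es}-1}{r}}f\bigr)\cdot\bigl(t^{\frac{(l-1)m(p^{es}-1)}{rp^{es}}}f^{\frac{l-1}{p^{es}}}\bigr)^{p^{es}}$ (both cofactor exponents are integers only because $p^{es}\mid l-1$), after which $p^{-es}$-linearity of the trace and integrality of $X$ yield $\Tr^{es}_\Delta(F^{es}_*(\lambda t^{(m-\delta)\frac{p^{es}-1}{r}}f))\neq 0$. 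For what it is worth, your own ingredients can be assembled into a complete variant: the source space for degree $m$ at level $es$ is spanned by the explicit section $u=t^{(m-\delta)\frac{p^{es}-1}{r}}\sigma$, the product $u\cdot(a\sigma^{p^{es}-1})^{p^{es}}$ is nonzero and spans the source space for degree $n_0+p^{es}m$, where your ideal-property argument gives $S^0\neq 0$ and hence a nonzero trace image; the projection formula then forces $\Tr^{es}_\Delta(F^{es}_*u)\neq 0$. But this assembly --- not the list of tools --- is the proof, and your proposal stops short of it.
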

\begin{proof}
	This is already present in \cite[Lemma 4.2.5]{Hacon_Pat_GV_Characterization_Ordinary_AV} in a less general form. Although their proof essentially applies to this setup, we reprove it here for the sake of the reader.
	
	Assume that $H^0(X, \cO_X(mD)) \neq 0$, otherwise there is nothing to do. Recall that $\theta_t$ is given by \[  \begin{tikzcd}
		F^s_*\cO_X(mD) \arrow[rr, "F^s_*t^{(m - \delta )\frac{(p^s - 1)}{r}}"] &  & F^s_*\cO_X((\delta (1 - p^s) + p^sm)D) \arrow[r, "\Tr^s_{\Delta}"] &  \cO_X(mD),
	\end{tikzcd} \] so since the first arrow is an inclusion, it induces an inclusion of global sections as well. Given that $\kappa(X, D) = 0$ by \autoref{K_S(X)_not_-infty_implies_K_S(X)_equals_K(X)}, we deduce that this inclusion is in fact an equality, whence \[ H^0_{ss, \theta_t}(X, \cO_X(mD)) = S^0(X, \Delta; \cO_X(mD)). \]
	
	Now, let $e \gg 0$. We are left to show that $\Tr^{es}_{\Delta}$ is not zero on global sections. Let $0 \neq f \in H^0(X, \cO_X(mD))$, and let $l' > 0$ be an integer such that $S^0(X, \Delta; \cO_X(l'mD)) \neq 0$. By definition, $\Tr^{es}_{\Delta}$ induces a surjection \[ F^{es}_*S^0\left(X, \Delta; \cO_X\left((\delta (1 - p^{es}) + p^{es}l'm)D\right)\right) \surj S^0(X, \Delta; \cO_X(l'mD)), \] so $l'' \coloneqq \delta (1 - p^{es}) + p^{es}l'm$ satisfies $S^0(X, \Delta; \cO_X(l''D)) \neq 0$. Note that $p^{es}$ divides $l'' - \delta$ by construction, so $l \coloneqq (l'')^2$ is a positive integer such that 
	\[ \begin{cases*}
		p^{es} \mbox{ divides } l - 1 \\
		S^0(X, \Delta; \cO_X(lmD)) \neq 0.
	\end{cases*} \]
	Since $\kappa(X, D) = 0$, there exists some $\lambda \in k^\times$ such that \[ \Tr_{\Delta}^{es}\left(F^{es}_*\left(\lambda t^{(lm - \delta )\frac{p^{es} - 1}{r}}f^l\right)\right) = f^l. \] The left--hand side can be rewritten as \begin{align*}
		\Tr_{\Delta}^{es}\left(F^{es}_*\left(\lambda t^{(lm - \delta )\frac{p^{es} - 1}{r}}f^l\right)\right) & = \Tr_{\Delta}^{es}\left(F^{es}_*\left(\left(\lambda t^{(m - \delta )\frac{p^{es} - 1}{r}}f\right) \cdot \left(t^{\frac{(l - 1)m(p^{es} - 1)}{rp^{es}}}f^{\frac{l - 1}{p^{es}}}\right)^{p^{es}}\right)\right) \\
		& = \Tr_{\Delta}^{es}\left(F^{es}_*\left(\lambda t^{(m - \delta )\frac{p^{es} - 1}{r}}f\right)\right)t^{\frac{(l - 1)m(p^{es} - 1)}{rp^{es}}}f^{\frac{l - 1}{p^{es}}},
	\end{align*}
	so \[ \Tr_{\Delta}^{es}\left(F^{es}_*\left(\lambda t^{(m - \delta )\frac{p^{es} - 1}{r}}f\right)\right) \neq 0. \] This is exactly what we wanted to show.
\end{proof}

\subsection{Abelian varieties, $V$--modules and generic vanishing}

\emph{From now on until the end of the paper, we work over a fixed algebraically closed base field $k$}. We also fix an integer $s > 0$. Our objective here is to gather all the results about abelian varieties and generic vanishing in positive characteristic we need to carry the proof of our main theorems. First, let us recall the existence theorem of the Albanese morphism.

\begin{thm}\label{existence_of_albanese}
	Let $X$ be a proper $k$--scheme such that $H^0(X, \cO_X) = k$. Then there exists a unique morphism $\alb_X \colon X \to \Alb(X)$ to an abelian variety $\Alb(X)$ such that for any other morphism $g \colon X \to B$ to an abelian variety, there exists a unique morphism $h \colon \Alb(X) \to Q$ such that $h \circ \alb_X = g$.
	
	Furthermore, the pullback of line bundles map $\Pic^0_{\Alb(X)/k} \to \Pic^0_{X/k}$ is a closed immersion, and the pullback of sections map $H^1(\Alb(X), \cO_{\Alb(X)}) \to H^1(X, \cO_X)$ is injective. Finally, $b_1(X) = 2\dim(\Alb(X))$.
\end{thm}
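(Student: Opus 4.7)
The plan is to construct $\Alb(X)$ via Grothendieck's Picard scheme. Since $X$ is proper over $k$ with $H^0(X, \cO_X) = k$, the relative Picard functor $\Pic_{X/k}$ is representable by a group scheme locally of finite type over $k$, and its identity component $\Pic^0_{X/k}$ is of finite type. Under the paper's standing hypotheses (normal proper varieties), one invokes classical FGA/SGA results to conclude that $\Pic^0_{X/k}$ is proper, so that its reduction $P \coloneqq (\Pic^0_{X/k})_{\red}$ --- a smooth proper group scheme over the algebraically closed field $k$ --- is an abelian variety. The Albanese is then defined as $\Alb(X) \coloneqq \widehat{P}$. The main obstacle here is precisely the properness of $\Pic^0_{X/k}$, which is not automatic in positive characteristic for arbitrary proper $X$ and is the one step that must be cited rather than constructed from scratch.

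Next, I would construct $\alb_X$ via the Poincaré bundle. Fixing a base point $x_0 \in X$, let $\cP$ be the Poincaré line bundle on $X \times P$, rigidified along $\{x_0\} \times P$. Reading $\cP$ as a family of degree--zero line bundles on $P$ parametrized by $X$, the universal property of $\widehat{P}$ produces a morphism $\alb_X \colon X \to \widehat{P} = \Alb(X)$. For the universal property, given $g \colon X \to B$ with $B$ an abelian variety, pullback of line bundles yields $\widehat{g} \colon \widehat{B} \to \Pic^0_{X/k}$, which factors through $P$ since $\widehat{B}$ is reduced; biduality of abelian varieties then produces the desired unique homomorphism $h \colon \Alb(X) \to B$, and $h \circ \alb_X = g$ follows by tracking the associated universal line bundles.

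The remaining three assertions are formal consequences of the construction. By definition, the pullback map $\Pic^0_{\Alb(X)/k} \to \Pic^0_{X/k}$ is canonically identified with the closed immersion $P = (\Pic^0_{X/k})_{\red} \inj \Pic^0_{X/k}$. Taking tangent spaces at the identity and using the standard identifications $T_0 \Pic^0_{X/k} \cong H^1(X, \cO_X)$ and $H^1(A, \cO_A) \cong T_0\widehat{A}$ for any abelian variety $A$, the injectivity of the pullback on $H^1$ follows from that of the tangent map of a closed immersion. Finally, for any prime $\ell \neq \characteristic(k)$, the Kummer sequence identifies $H^1_{\et}(X, \bZ_\ell(1))$ with the Tate module $T_\ell \Pic^0_{X/k} = T_\ell P$, which has $\bZ_\ell$-rank $2\dim P$; hence $b_1(X) = 2\dim P = 2\dim\Alb(X)$.
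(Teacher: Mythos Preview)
Your approach is the classical one: construct $\Alb(X)$ as the dual of $(\Pic^0_{X/k})_{\red}$ and derive everything from that. The paper's proof, by contrast, consists entirely of citations---chiefly to Schr\"oer--Laurent's recent work on Albanese maps via para-abelian varieties---so your argument is genuinely more constructive. The tangent-space argument for $H^1$-injectivity and the Kummer/Tate-module argument for $b_1(X) = 2\dim\Alb(X)$ are correct and essentially match what the cited references do.

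There is, however, a real gap relative to the theorem \emph{as stated}. The hypothesis is only that $X$ is a proper $k$-scheme with $H^0(X,\cO_X)=k$, not that $X$ is normal. You explicitly restrict to ``the paper's standing hypotheses (normal proper varieties)'' to invoke properness of $\Pic^0_{X/k}$, but this restriction is not part of the theorem. For non-normal $X$ the scheme $\Pic^0_{X/k}$ need not be proper---the nodal cubic already has $\Pic^0 \cong \bG_m$---so $(\Pic^0_{X/k})_{\red}$ is not an abelian variety and your definition of $\Alb(X)$ does not make sense. Note also that this is a normality issue, not a positive-characteristic one as your parenthetical suggests: the nodal cubic fails in every characteristic, while for geometrically normal proper $X$ the properness of $\Pic^0_{X/k}$ holds in every characteristic. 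The Schr\"oer--Laurent construction the paper cites circumvents this entirely by building $\Alb(X)$ without assuming $\Pic^0_{X/k}$ is proper (roughly, one dualizes only the maximal abelian subvariety of $(\Pic^0_{X/k})_{\red}$), which is why it covers the stated generality.

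That said, every application of this theorem in the paper is to normal proper varieties, so your argument suffices for everything the paper actually needs; you should just flag clearly that your proof covers only that case and defer to Schr\"oer--Laurent for the general statement.
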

\begin{proof}
	Everything except the two statements follows from \cite[Definition 8.1 and Theorem 10.2]{Schroer_Laurent_Para_Abelian_varieties_and_Albanese_maps} (with their notation, a para--abelian variety over an algebraically closed field is already an abelian variety by \cite[First paragraph in Section 4 and Proposition 4.3]{Schroer_Laurent_Para_Abelian_varieties_and_Albanese_maps}). Since the tangent space at $\cO_X$ of $\Pic^0_{X/k}$ is exactly $H^1(X, \cO_X)$, and similarly for $\Alb(X)$ instead of $X$ (see for example \cite[Theorem 9.5.11]{Kleiman_The_Picard_Scheme} and \cite[Theorem 2.1]{Schroer_Laurent_Para_Abelian_varieties_and_Albanese_maps}), the statement about $H^1$--groups follows. Finally, see for example \cite[Lemma 1.6]{Roessler_Schroer_Varieties_with_free_tangent_sheaves}) for the equality $b_1(X) = 2\dim(\Alb(X))$.
\end{proof}

Throughout, $A$ denotes an abelian variety over $k$ of dimension $g$, with dual abelian variety $\bighat{A}$ and normalized Poincaré bundle $\cP \in \Pic(A \times \bighat{A})$. Although $\omega_A \cong \cO_A$, we shall write $\omega_A$ when we want to see it as a Cartier module and $\cO_A$ when we see it as an Frobenius module.

\begin{defn}\label{def:property_AV}
	\begin{itemize}
		\item The \emph{$p$--rank} of $A$ is the biggest integer $i \in \{0, \dots, g\}$ such that $H^i(X, \cO_A) \not\sim_F 0$. 
		\item If the $p$--rank of $A$ is $g$, then $A$ is said to be \emph{ordinary}.
		\item An abelian variety is said to be \emph{supersingular} if it is isogeneous to a product of non--ordinary elliptic curves. 
		\item An abelian variety is said to have \emph{no supersingular factor} if there exists no non--trivial morphism from a supersingular abelian variety (or equivalently from a supersingular elliptic curve).
	\end{itemize}
\end{defn}

\begin{rem}
	Let $r(A)$ denote the $p$-rank of $A$. Since $H^*(A, \cO_A) = \bigwedge H^1(A, \cO_A)$, we deduce that the $p$--rank of $A$ is also the dimension of the semistable part of $H^1(A, \cO_A)$. In other words $A$ has exactly $p^{r(A)}$-torsion points by \cite[top of p.148]{Mumford_Abelian_Varieties}. Our definition therefore agrees with the usual one.
\end{rem}

\begin{prop}\label{basic facts about AV}
	Let $r(A)$ denote the $p$--rank of $A$. Then the following holds: 
	\begin{enumerate}
		\item\label{itm:p_rank stable under isogenies} two isogeneous abelian varieties have the same $p$--rank;
		\item\label{itm:A_surjects_to_B_and_A_ord_implies_B_ord} if $f \colon A \to B$ is a surjective morphism of abelian varieties and $A$ is ordinary, then so is $B$;
		\item\label{itm:ordinary_iff_V_etale} the (absolute) Verschiebung $V$ (see \autoref{def_V_module}) is étale if and only if $A$ is ordinary.
	\end{enumerate}
\end{prop}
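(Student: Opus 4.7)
The plan is to deduce all three items from the combinatorial reformulation $r(A) = \log_p |A[p](k)|$ (equivalently, the dimension of the semistable part of $H^1(A, \cO_A)$), which is recorded in the remark preceding the statement.

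For \autoref{itm:p_rank stable under isogenies}, I would factor a given isogeny $f \colon A \to B$ as a separable piece followed by a purely inseparable piece. On the separable (hence étale) factor, $f^* \colon H^1(B, \cO_B) \to H^1(A, \cO_A)$ is a Frobenius-equivariant injection between $k$-vector spaces of the same dimension $g$, hence an isomorphism of Frobenius modules, so the semistable rank is preserved. The purely inseparable factor reduces to an iterate of the relative Frobenius $A \to A^{(p)}$, and one checks directly (or via the identification with Dieudonné modules) that Frobenius twisting preserves the dimension of the semistable part of $H^1$. Alternatively, one can invoke the fact that isogenous abelian varieties have isogenous $p$-divisible groups and that the étale rank is an isogeny invariant of $p$-divisible groups (\cite{Mumford_Abelian_Varieties}).

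For \autoref{itm:A_surjects_to_B_and_A_ord_implies_B_ord}, I would apply Poincaré's complete reducibility theorem to the abelian subvariety $K \coloneqq (\ker f)^0 \subseteq A$: there exists a complementary abelian subvariety $C \subseteq A$ such that the sum map $C \times K \to A$ is an isogeny and $C$ is isogenous to $B$. By item (1), ordinarity transfers from $A$ to $C \times K$. Additivity of the $p$-rank under products, which is immediate from the Künneth decomposition $H^1(C \times K, \cO) = H^1(C, \cO) \oplus H^1(K, \cO)$ as Frobenius modules, then forces $C$ to be ordinary, and one more application of (1) yields ordinarity of $B$.

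For \autoref{itm:ordinary_iff_V_etale}, the plan is a degree count. From $F \circ V = [p]_A$ together with $\deg F = p^g$ and $\deg [p]_A = p^{2g}$, one gets $\deg V = p^g$, so $V$ is étale if and only if $|\ker V(k)| = p^g$. Since $k$ is perfect, the absolute Frobenius acts bijectively on $k$-points, so $\ker F(k) = 0$; the identity $FV = [p]$ then gives $\ker V(k) = A[p](k)$, which by the preceding remark has size $p^{r(A)}$. Therefore $V$ is étale iff $r(A) = g$, iff $A$ is ordinary. The main obstacle is really item (1): once this is established, items (2) and (3) follow quickly. The one mildly delicate point in (1) is the purely inseparable case, where one has to be careful with the Frobenius twist, but this is classical.
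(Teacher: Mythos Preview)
Your arguments for items (2) and (3) are correct. The paper itself does not argue these points but simply cites \cite[p.~147]{Mumford_Abelian_Varieties} and \cite[Lemma 2.3.4]{Hacon_Pat_GV_Characterization_Ordinary_AV}, so in those parts you are supplying more detail than the paper does.

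However, your primary argument for item (1) contains a genuine error. You claim that for a separable (hence étale) isogeny $f \colon A \to B$, the pullback $f^* \colon H^1(B, \cO_B) \to H^1(A, \cO_A)$ is injective. This fails when $p \mid \deg f$. Under the identification $H^1(X, \cO_X) \cong \Lie(\bighat{X})$, the map $f^*$ is the differential $\Lie(\bighat{f})$ of the dual isogeny $\bighat{f} \colon \bighat{B} \to \bighat{A}$. If $f$ is étale of $p$--power degree, then $\ker \bighat{f} = (\ker f)^{\vee}$ is the Cartier dual of an étale $p$--group, hence of multiplicative type and infinitesimal with nontrivial Lie algebra; thus $\Lie(\bighat{f})$ has a kernel. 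Concretely, if $E$ is an ordinary elliptic curve and $f \colon E \to E/(\bZ/p\bZ)$ is the quotient by the étale part of $E[p]$, then $\ker \bighat{f} \cong \mu_p$ and $f^*$ on $H^1$ is the zero map, yet both curves are ordinary.

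The alternative you mention, via $p$--divisible groups, is the correct route and is essentially the content of the Mumford reference the paper cites. A self--contained elementary version is the torsion--point count: given an isogeny $f \colon A \to B$ of degree $n$ with quasi--inverse $g$ (so $gf = [n]_A$), the restriction $f \colon A[p^m](k) \to B[p^m](k)$ has kernel of size at most $n$ for every $m$, whence $p^{r(A)m}/n \leq p^{r(B)m}$; letting $m \to \infty$ gives $r(A) \leq r(B)$, and symmetry via $g$ gives equality.
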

\begin{proof}
	See \cite[p. 147]{Mumford_Abelian_Varieties} and \cite[Lemma 2.3.4]{Hacon_Pat_GV_Characterization_Ordinary_AV}.
\end{proof}

\begin{defn}[{\cite[Section 5]{Hacon_Pat_GV_Geom_Theta_Divs}}]\label{def_V_module}
	The \emph{relative Verschiebung} $V_{A/k} \colon \bighat{A}^{(p)} = \bighat{A^{(p)}} \to \bighat{A}$ is by definition the dual isogeny of the relative Frobenius $F_{A/k} \colon A \to A^{(p)}$ (see \stacksproj{0CC9}). Since $k$ is perfect, we can canonically identify $\bighat{A}^{(p)}$ and $\bighat{A}$ as schemes. The induced morphism $V \colon \bighat{A} \to \bighat{A}$ is called the \emph{absolute Verschiebung}. By definition, the diagram
	\[ \begin{tikzcd}
		\bighat{A} \arrow[rr, "V"] \arrow[d] &  & \bighat{A} \arrow[d] \\
		\Spec k \arrow[rr, "F^{-1}"]         &  & \Spec k             
	\end{tikzcd} \] commutes.
	\begin{itemize}
		\item A \emph{$V^s$--module} on $\bighat{A}$ is a pair $(\cN, \theta)$ where $\cN$ is a coherent sheaf on $\bighat{A}$ with a morphism $\theta \colon \cN \to V^{s, *}\cN$. A morphism of $V$--modules is a morphism of underlying sheaves commuting with the $V^s$--module structures. The category of $V^s$-modules on $\bighat{A}$ is denoted $\Coh_{\bighat{A}}^{V^s}$.
		\item A $V^s$--module $(\cM, \theta)$ is nilpotent if $\theta^N = 0$ for some $N \geq 0$ (we abuse notation as in \autoref{notation_crystals}.\autoref{itm:abuse_iterates_structure_map}). Nilpotent modules form a Serre subcategory, whose quotient is the category of \emph{$V^s$--crystals}, denoted $\Crys_{\bighat{A}}^{V^s}$. Morphisms of $V^s$-crystals are denoted by $\dra$, and if $\cM$ and $\cN$ are two isomorphic $V^s$--crystals, we write $\cM \sim_V \cN$. 
		\item A $V^s$--module $(\cM, \theta)$ is said to be \emph{injective} if $\theta$ is injective.
	\end{itemize}
\end{defn}
\begin{rem}\label{canonical_injective_V_module}
	\begin{itemize}
		\item As for Frobenius and Cartier modules, we will omit the $s$ in the name ``$V^s$--module''.
		\item For any $V$--module $\cN$, there exists a canonical injective $V$--module $\cN_{\injj}$ with a surjective map $\cN \surj \cN_{\injj}$ inducing an isomorphism of crystals. Explicitly, $\cN_{\injj}$ is given by the image of $\cN \to \colim V^{es, *}\cN$. Most of the time, we will only consider injective $V$--modules.
	\end{itemize}
\end{rem}

%

Recall the following definition from \cite{Mukai_Fourier_Mukai_transform} (see also \cite{Schnell_Fourier_Mukai_transform_made_easy}).
\begin{defn}
	\begin{itemize}
		\item The \emph{Fourier--Mukai transforms} are the functors $R\bighat{S} \colon D_{\coh}(A) \to D_{\coh}(\bighat{A})$ and $RS \colon D_{\coh}(\bighat{A}) \to D_{\coh}(A)$ defined by 
		\[ R\hat{S}(\cM) \coloneqq Rp_{\hat{A}, *}(Lp_A^*\cM \otimes \cP)  \esp \mbox{ and } \esp RS(\cN) \coloneqq Rp_{A, *}(Lp_{\bighat{A}}^*\cN \otimes \cP), \] where $p_A \colon A \times \bighat{A} \to A$ and $p_{\bighat{A}} \colon A \times \bighat{A} \to \bighat{A}$ denote the projections.
		\item The \emph{symmetric Fourier--Mukai transforms} are the functors $\FM_A \coloneqq R\bighat{S} \circ D_A$ and $\FM_{\bighat{A}} \coloneqq RS \circ D_{\bighat{A}}$, where for a separated $k$--scheme $Y$, we set $D_Y \coloneqq \cR\HHom(-, \omega_Y^{\bullet})$.
	\end{itemize}
\end{defn}

The Fourier-Mukai transform has the following important properties, which are central in generic vanishing theory.

\begin{thm}\label{properties_Fourier_Mukai_transform}
	Let $\cM$ be a coherent sheaf on $A$.
	\begin{enumerate}
		\item\label{itm:support} For all $i \notin \{-g, \dots, 0\}$, then $\cH^i(\FM_A(\cM)) = 0$ for all $i \notin \{-g, \dots 0\}$. The same holds for $\FM_{\bighat{A}}$.
		\item\label{itm:equiv_cat} The functors $\FM_A$ and $\FM_{\bighat{A}}$ are inverses of each other, and hence equivalences of categories.
		\item\label{itm:equiv_cat_unipotent} We have $\FM_A(\omega_A) = k(0)$ (i.e. the skyscraper sheaf at $0 \in \bighat{A}$). In particular, the functor $\FM_A$ induces an equivalence of categories between unipotent vector bundles (i.e. vector bundles which are consecutive extensions of $\omega_A$) and coherent modules on $\bighat{A}$ supported at the point $0$. More generally, it also induces an equivalence of categories between homogeneous (i.e. translation--invariant) vector bundles on $\bighat{A}$ and finite length modules on $\bighat{A}$.
		\item\label{itm:behaviour_under_translation} For any $\alpha \in \bighat{A}(k)$, \[ T_{-\alpha}^*  \circ \FM_A \cong \FM_A \circ \: (\alpha \otimes -),\] where $T_\alpha$ denotes the translation map by $\alpha$.
		\item\label{itm:behaviour_pushforwards_and_pullbacks} Let $f \colon A \to B$ be a morphism of abelian varieties, with dual morphism $\bighat{f} \colon \bighat{B} \to \bighat{A}$. Then we have \[ \FM_B \: \circ \: Rf_* \cong L\bighat{f}^* \circ \FM_A. \]
		In particular, if $\cM$ is a Cartier module, then each $\cH^i(\FM_A(\cM))$ has an induced $V$--module structure and vice--versa.
		\item\label{itm:support_H0} We have that \[ \Supp \left(\cH^0\FM_A(\cM)\right) = \set{ \alpha \in \bighat{A}}{ H^0(A, \cM \otimes \alpha^{\vee}) \neq 0}.\]
	\end{enumerate}
\end{thm}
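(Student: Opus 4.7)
The six properties collected here are essentially the classical Mukai repertoire for the Fourier--Mukai transform on abelian varieties, so my plan is largely to point to \cite{Mukai_Fourier_Mukai_transform} and \cite{Schnell_Fourier_Mukai_transform_made_easy} for what is standard and to isolate the pieces that involve the specific Cartier/$V$-module structure needed later in the paper.

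I would dispatch (1) and (2) together. The amplitude bound in (1) follows from the observation that $D_A(\cM) = R\HHom(\cM, \omega_A)[g]$ lives in degrees $[-g, 0]$ for $\cM$ coherent (since $A$ is smooth of dimension $g$ and $\omega_A \cong \cO_A$) combined with the fact that $R\bighat S$ preserves this range because the projection $p_{\bighat A}$ has fibres of dimension $g$; alternatively, one invokes Mukai's perverse $t$-exactness directly. Statement (2) is then the original inversion formula $R\bighat S \circ RS \simeq (-1_A)^*[-g]$ of Mukai, rephrased using Grothendieck--Serre duality on $A$.

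Next I would treat (3), (4) and (6), all of which fall out of (2) together with base change. For $\FM_A(\omega_A) = k(0)$ one computes $\FM_A(\omega_A) = R\bighat S(\cO_A) = Rp_{\bighat A, *}\cP$, and cohomology-and-base-change together with the classical vanishing $H^*(A, \cP_\alpha) = 0$ for $0 \neq \alpha \in \bighat A(k)$ and $\cP_0 \cong \cO_A$ give the result. The equivalences for unipotent and homogeneous bundles then follow from (2) by expressing such bundles as iterated extensions and using (4) to reduce the homogeneous case to the unipotent one, up to translation. Property (4) itself follows from the defining identity $(1_A \times T_\alpha)^*\cP \cong \cP \otimes p_A^*\alpha$ of the Poincaré bundle. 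For (6), I would apply the base change identity $Li_\alpha^* R\bighat S(-) \simeq R\Gamma(A, - \otimes \cP_\alpha)$ to $D_A(\cM)$ and invoke Serre duality on $A$.

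That leaves (5), which is also the only point where the Cartier/$V$-module structures intervene. The isomorphism $\FM_B \circ Rf_* \cong L\bighat f^* \circ \FM_A$ is derived from the compatibility $(f \times 1_{\bighat B})^*\cP_B \cong (1_A \times \bighat f)^*\cP_A$ combined with flat base change on the obvious cartesian square. For the final sentence, the key observation is that the dual isogeny of $F^s_{A/k}$ is precisely $V^s_{A/k}$ (see \autoref{def_V_module}), so specialising the isomorphism just proved to $f = F^s$ and identifying $A^{(p^s)}$ with $A$ using perfectness of $k$ yields $\FM_A \circ F^s_* \cong V^{s,*} \circ \FM_A$. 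Transporting a Cartier structure $\theta \colon F^s_*\cM \to \cM$ through this isomorphism produces a morphism $\cH^i\FM_A(\cM) \to V^{s,*}\cH^i\FM_A(\cM)$, i.e.\ a $V$-module structure, and symmetrically in the other direction. The only genuine obstacle I foresee is the bookkeeping of all the Serre-duality identifications and the chosen isomorphism $\cO_{\Spec k} \to F^!\cO_{\Spec k}$, but no new input should be required.
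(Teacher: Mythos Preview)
Your proposal is correct and matches the paper's own proof, which is likewise a list of citations to Mukai and Schnell for (1)--(5) and to the author's earlier generic vanishing paper for (6); you simply flesh out the sketches a bit more. One trivial slip: in your computation for (3) you write $\FM_A(\omega_A) = R\bighat S(\cO_A)$, but $D_A(\omega_A) = \cO_A[g]$, so there is a shift by $[g]$ that you need to cancel against the fact that $Rp_{\bighat A,*}\cP \cong k(0)[-g]$ --- the conclusion is unaffected.
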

\begin{proof}
	Point \autoref{itm:support} follows from the first lines in the proof of \cite[Theorem 3.1.1]{Hacon_Pat_GV_Characterization_Ordinary_AV}. Points \autoref{itm:equiv_cat}, \autoref{itm:behaviour_under_translation} and \autoref{itm:behaviour_pushforwards_and_pullbacks} follow respectively from \cite[Theorem 3.2, Propositions 4.1 and 5.1]{Schnell_Fourier_Mukai_transform_made_easy}. Point \autoref{itm:equiv_cat_unipotent} can be found in \cite[Examples 2.9 and 3.2]{Mukai_Fourier_Mukai_transform}. Finally, point \autoref{itm:support_H0} follows from \cite[Theorem 2.2.8.(g)]{Baudin_Positive_characteristic_generic_vanishing_theory}.
\end{proof}

The following is a collection of central results in the positive characteristic side of generic vanishing theory.

\begin{thm}\label{equivalence_V_crystals_and_Cartier_crystals}
	The functor $(\Coh_A^{C^s})^{op} \to \Coh_{\bighat{A}}^{V^s}$ given by $\cM \mapsto \cH^0(\FM_A(\cM))$ induces an equivalence of categories \[ (\Crys_A^{C^s})^{op} \cong \Crys_{\bighat{A}}^{V^s}. \]
	Furthermore, if $\cM \in \Coh_A^{C^s}$, then for all $i \neq 0$, \[ \cH^i(\FM_A(\cM)) \sim_V 0, \] and the analogous statement for $V$--modules also holds. In particular, if we let $\cN \coloneqq \cH^0(\FM_A(\cM))$, then for all $\alpha \in \Pic^0(A)$ and $i \geq 0$, we have \[ \varprojlim H^i(A, F^{es}_*\cM \otimes \alpha)  \cong \left(\colim \Tor_i(V^{es, *}\cN, k(- \alpha))\right)^{\vee}.  \] This also holds if we replace $\cN$ by $\cN_{\injj}$.
\end{thm}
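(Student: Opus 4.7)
The plan is to treat the three parts of the statement in sequence.

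\medskip

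\emph{Step 1: the interchange $\FM_A \circ F^s_* \cong V^{s,*} \circ \FM_A$.} For the non-symmetric transform $R\bighat{S}$ this follows from point~\autoref{itm:behaviour_pushforwards_and_pullbacks} applied to $f = F_{A/k}^s$, whose dual isogeny is $V_{A/k}^s$ by definition; the canonical identifications $A \cong A^{(p^s)}$ and $\bighat{A} \cong \bighat{A}^{(p^s)}$ coming from perfectness of $k$ let us pass to the absolute versions. To transfer this across the duality factor in $\FM = R\bighat{S} \circ D_A$, I use Grothendieck duality for the finite morphism $F^s$ together with the unit Cartier structure $F^{s,!}\omega_A^\bullet \cong \omega_A^\bullet$ to obtain $D_A(F^s_*\cM) \cong F^s_*D_A(\cM)$. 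The contravariant functor $\FM_A$ then converts a Cartier structure $\theta\colon F^s_*\cM \to \cM$ into a morphism $\FM_A(\cM) \to V^{s,*}\FM_A(\cM)$. Because $V^s$ is a finite flat isogeny, $V^{s,*}$ is exact and taking $\cH^0$ yields a bona fide $V^s$-module structure on $\cH^0\FM_A(\cM)$; nilpotent objects are visibly sent to nilpotent objects, so the construction descends to a contravariant functor $\Crys_A^{C^s} \to \Crys_{\bighat{A}}^{V^s}$.

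\medskip

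\emph{Step 2: equivalence of crystal categories and the nilpotent vanishing.} The decisive content is the vanishing $\cH^i\FM_A(\cM) \sim_V 0$ for $i \neq 0$; once this is granted, the equivalence of crystal categories is formal, since $\FM_A$ is already a derived equivalence by point~\autoref{itm:equiv_cat} and the vanishing lets the heart-to-heart functor $\cM \mapsto \cH^0\FM_A(\cM)$ inherit the equivalence, with quasi-inverse $\cN \mapsto \cH^0\FM_{\bighat{A}}(\cN)$. To prove the vanishing I would (i) reduce to the case where $(\cM, \theta)$ is a \emph{unit} Cartier module, which is legitimate since every Cartier crystal admits a unit representative (take the stable image of $\theta^{\flat, e}$ for $e \gg 0$); (ii) transport the question to constructible étale $\bF_{p^s}$-sheaves on $A$ via the duality of \cite[Theorem 5.2.7]{Baudin_Duality_between_perverse_sheaves_and_Cartier_crystals}; and (iii) use Step~1 (which at the étale level matches Verschiebung with the dual of geometric Frobenius) to identify the transported functor with the Fourier--Mukai transform for constructible $\bF_{p^s}$-sheaves on abelian varieties. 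Since the latter is $t$-exact on the heart (up to the dimensional shift absorbed into the duality $D_A$), the nilpotent vanishing follows.

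\medskip

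\emph{Step 3: the cohomology identity.} With Steps 1 and 2 in hand, the identity is obtained by assembling Serre duality, the symmetric Fourier--Mukai, and local duality. Using $\omega_A \cong \cO_A$, Serre duality gives
\[ H^i(A, F^{es}_*\cM \otimes \alpha)^\vee \;\cong\; \Ext^{g-i}_A(F^{es}_*\cM, \alpha^\vee). \]
The contravariant equivalence $\FM_A$ converts this into $\Ext^{g-i}_{\bighat{A}}\!\big(\FM_A(\alpha^\vee),\, \FM_A(F^{es}_*\cM)\big)$; point~\autoref{itm:equiv_cat_unipotent} and the translation identity~\autoref{itm:behaviour_under_translation} give $\FM_A(\alpha^\vee) = k(-\alpha)$, and Step~1 gives $\FM_A(F^{es}_*\cM) = V^{es,*}\FM_A(\cM)$. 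Local Koszul duality at the regular point $-\alpha \in \bighat{A}$ then rewrites this $\Ext$-group as $\Tor_i(k(-\alpha), V^{es,*}\FM_A(\cM))$ --- the canonical trivialization of $\det T_{-\alpha}\bighat{A}$ on an abelian variety kills the usual det-twist. Step~2 allows the replacement of $\FM_A(\cM)$ by $\cN = \cH^0\FM_A(\cM)$ once we pass to $\colim_e V^{es,*}$, because the higher cohomology sheaves of $\FM_A(\cM)$ are nilpotent $V$-modules and so are annihilated in the colimit. Applying $\varprojlim(\cdot)^\vee = (\colim(\cdot))^\vee$ on finite-dimensional $k$-vector spaces yields the stated formula. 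The final assertion (that $\cN$ may be replaced by $\cN_{\injj}$) is immediate from \autoref{canonical_injective_V_module}: $\cN \surj \cN_{\injj}$ is an isomorphism of $V$-crystals, hence induces an isomorphism on $\colim V^{es,*}$.

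\medskip

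The hard part is Step~2: the nilpotent $t$-exactness of the Fourier--Mukai transform on Cartier crystals over an abelian variety. This is the positive-characteristic generic vanishing theorem in its crystalline formulation, and is the only genuinely non-formal input; everything else is formal manipulation of Grothendieck/Serre duality, local duality, and the Frobenius--Verschiebung interchange.
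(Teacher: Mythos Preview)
The paper does not prove this statement; it simply cites \cite[Proposition 3.2.5, Theorems 3.1.2 and 3.2.1, Lemma 3.3.9]{Baudin_Positive_characteristic_generic_vanishing_theory} and \cite{Hacon_Pat_GV_Characterization_Ordinary_AV, Hacon_Pat_GV_Geom_Theta_Divs}. Your Steps~1 and~3 are essentially correct formal manipulations (up to careful tracking of shifts in Step~3), and your identification of Step~2 as the hard input is right. However, the argument you give for Step~2 has two genuine gaps.

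First, the reduction (i) is false: not every Cartier crystal on $A$ admits a \emph{unit} coherent representative. Take for instance $\cM = k(0)$ with its canonical Cartier structure (corresponding to $\omega_A$ under the very equivalence being proved). Since $F^{s,\flat} \cong F^{s,*}$ on an abelian variety, $F^{s,\flat}k(0)$ has length $p^{sg} > 1$, so the adjoint map $k(0) \to F^{s,\flat}k(0)$ cannot be an isomorphism. The stabilization procedure you describe does not terminate in the coherent category here.

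Second, and more seriously, step (iii) is circular. There is no independently established ``Fourier--Mukai transform for constructible $\bF_{p^s}$-sheaves on abelian varieties'' whose $t$-exactness you can invoke: the Poincar\'e bundle is a coherent object, not an \'etale one, so the only way to define such a functor is to transport $\FM_A$ through the Riemann--Hilbert equivalence of \cite[Theorem 5.2.7]{Baudin_Duality_between_perverse_sheaves_and_Cartier_crystals}---and then its $t$-exactness is precisely the statement $\cH^i\FM_A(\cM) \sim_V 0$ for $i \neq 0$ that you are trying to prove. (Note also that this equivalence lands in \emph{perverse} sheaves, not sheaves in the standard $t$-structure, which further obstructs any direct exactness argument on the \'etale side.)

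The actual proofs in the cited references proceed differently: Hacon--Patakfalvi argue directly on $\bighat{A}$, showing via a dimension/support analysis that repeated Verschiebung pullback of $\cH^i\FM_A(\cM)$ eventually kills the higher cohomology sheaves, without any detour through \'etale sheaves. Baudin's paper refines this into the categorical equivalence. You have correctly located where the content lies, but the mechanism you propose for extracting it does not work.
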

\begin{proof}
	See \cite[Proposition 3.2.5, Theorems 3.1.2 and 3.2.1 and Lemma 3.3.9]{Baudin_Positive_characteristic_generic_vanishing_theory} (see also \cite[Theorem 1.3.1]{Hacon_Pat_GV_Characterization_Ordinary_AV} and \cite[Theorem 5.2]{Hacon_Pat_GV_Geom_Theta_Divs}).
\end{proof}

Let us now setup some notations in order to phrase (a version of) the next important result.

\begin{notation}
	Let $\cM$ be a Cartier module on $A$, let $\cN \coloneqq \cH^0\FM_A(\cM)$, and let $i \geq 0$. We set \[ W^i_F(\cM) \coloneqq \set{\alpha \in \Pic^0(A)}{\lim H^i(A, F^{es}_*\cM \otimes \alpha) \neq 0} \subseteq \Pic^0(A), \] and \[ V^i_{\injj}(\cM) \coloneqq \set{\alpha \in \bighat{A}}{\Tor_i(\cN_{\injj}, k(-\alpha)) \neq 0} \inc \bighat{A}. \]
	Given any coherent sheaf $\cM'$ on $A$, we also define $V^0(\cM') \coloneqq -\Supp \cH^0\FM_A(\cM') \inc \bighat{A}$ (see \autoref{properties_Fourier_Mukai_transform}.\autoref{itm:support_H0}).
\end{notation}

The idea is that the closed subsets $V^i_{\injj}(\cM)$ are supposed to approximate well the $W^i_F(\cM)$. Let us make this precise (see \cite[Theorem 3.3.5 and Proposition 3.3.17]{Baudin_Positive_characteristic_generic_vanishing_theory} for a more complete version of this theorem and Section 3.4 from \emph{loc. cit.} for examples):

\begin{thm}\label{generic vanishing}
	Let $\cM$ be a Cartier module on $A$, and let $i \geq 0$. Then the following holds:
	\begin{enumerate}[topsep=1ex, itemsep=1ex]
		\item\label{itm:gv_inclusions} $V^{i + 1}_{\injj}(\cM) \inc V^i_{\injj}(\cM)$;
		\item\label{itm:gv_codim} $\codim V^i_{\injj}(\cM) \geq i$;
		\item\label{itm:gv_p_closed} if $S^i(\cM)$ denotes the union of irreducible components of $V^i_{\injj}(\cM)$ of codimension $i$, then $p^s(S^i(\cM)) \inc S^i(\cM)$. In particular, prime--to--$p$ torsion points are dense in $S^i(\cM)$ and if $A$ has no supersingular functor, then every irreducible component of $S^i(\cM)$ is a translate of an abelian subvariety by a prime--to--$p$ torsion point;
		\item\label{itm:gv_link_with_section}
		we have the inclusion $V^0_{\injj}(\cM) \inc V^0(\cM)$;
		\item\label{van_and_non_van} a very general closed point of $S^i(\cM)$ is in $W^i_F(\cM)$, and for any $\alpha \in W^i_F(\cM)$, there exists $e \geq 0$ such that $p^{es}(\alpha) \in V^i_{\injj}(\cM)$.
	\end{enumerate}
\end{thm}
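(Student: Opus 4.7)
All five items concern properties of the Tor sheaves of $\cN_{\injj}$, where $\cN \coloneqq \cH^0\FM_A(\cM)$. My plan is to exploit both the limit formula from \autoref{equivalence_V_crystals_and_Cartier_crystals}, namely
\[ \varprojlim H^i(A, F^{es}_*\cM \otimes \alpha) \cong \left(\colim_e \Tor_i(V^{es,*}\cN_{\injj}, k(-\alpha))\right)^\vee, \]
and the injection $\cN_{\injj} \hookrightarrow V^{s,*}\cN_{\injj}$ coming from the injectivity of $\cN_{\injj}$ as a $V$-module (see \autoref{canonical_injective_V_module}). Since under the identification $\bighat{A^{(p^s)}} \cong \bighat{A}$ the pullback $V^{s,*}$ corresponds to $[p^s]^*$ (up to a Frobenius twist on the base, which is trivial as $k$ is perfect), this injection is what makes $V^i_{\injj}(\cM)$ behave well under multiplication by $p^s$.

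I would first dispatch the two easier parts. For \autoref{itm:gv_link_with_section}, the surjection $\cN \twoheadrightarrow \cN_{\injj}$ gives $\Supp(\cN_{\injj}) \subseteq \Supp(\cN)$, whence the inclusion by \autoref{properties_Fourier_Mukai_transform}.\autoref{itm:support_H0}. For \autoref{van_and_non_van}, the limit formula combined with Noetherianity shows that $\alpha \in W^i_F(\cM)$ iff some finite stage $\Tor_i(V^{es,*}\cN_{\injj}, k(-\alpha))$ is non-zero, which via $V^{s,*} \cong [p^s]^*$ translates to $p^{es}\alpha \in V^i_{\injj}(\cM)$; the ``very general'' direction uses that Tor modules are non-zero on a dense open of each component of $V^i_{\injj}(\cM)$ of maximal codimension. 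For \autoref{itm:gv_p_closed}, one combines the injection $\cN_{\injj} \hookrightarrow V^{s,*}\cN_{\injj}$ with the identification $V^{s,*} \cong [p^s]^*$ to deduce $p^s(V^i_{\injj}(\cM)) \subseteq V^i_{\injj}(\cM)$, which forces equality (and hence $p^s$-invariance) on the finitely many components of maximal codimension $i$; the structural consequences (density of prime-to-$p$ torsion and, under the no-supersingular-factor assumption, that each component is a translate of an abelian subvariety by a prime-to-$p$ torsion point) are then classical facts about $p^s$-invariant closed subsets of abelian varieties.

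The main obstacle is the codimension bound \autoref{itm:gv_codim}, which is the substantive generic vanishing input. My approach would be to apply the Auslander--Buchsbaum formula on the smooth variety $\bighat{A}$, reducing the estimate to a control on the projective dimension of (some iterated pullback of) $\cN_{\injj}$. Since $\cN$ is far from being a GV-sheaf in general, this cannot be done directly; instead, one exploits the $V$-module structure by replacing $\cN_{\injj}$ with $V^{es,*}\cN_{\injj}$ for $e \gg 0$, which enlarges the Tor loci but keeps them ``$p$-compatible'' and eventually makes the required projective dimension bound visible. The inclusion \autoref{itm:gv_inclusions} would follow from the same spreading argument, since the Tor sheaves of $\cN_{\injj}$ and those of $V^{es,*}\cN_{\injj}$ share a compatible filtration by their supports. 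For this part of the argument I would rely on \cite[Theorem 3.3.5 and Proposition 3.3.17]{Baudin_Positive_characteristic_generic_vanishing_theory}, where the required statements are established in the categorified generic vanishing framework.
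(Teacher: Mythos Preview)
Your plan is broadly aligned with the paper's own treatment, which is essentially a list of citations (to \cite[Theorems 3.3.5 and 3.3.7, Proposition 3.3.17]{Baudin_Positive_characteristic_generic_vanishing_theory} and \cite[Proposition 6.1]{Pink_Roessler_Manin_Mumford_conjecture}); you are in fact giving more detail than the paper does, and you end up leaning on the same references for items \autoref{itm:gv_inclusions} and \autoref{itm:gv_codim}. That said, a few of your intermediate assertions are imprecise and would not survive as written.

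First, the claimed identification ``$V^{s,*} \cong [p^s]^*$'' is not an isomorphism of functors: one has $F \circ V = [p]$, so $V^{s,*}$ and $[p^s]^*$ differ by an absolute Frobenius pullback. For your purposes this is harmless, since Frobenius is a universal homeomorphism and hence $V^s$ agrees with $[p^s]$ set--theoretically on closed points (the paper uses exactly this fact), but you should say so explicitly rather than invoke a functor isomorphism. Second, the injection $\cN_{\injj} \hookrightarrow V^{s,*}\cN_{\injj}$ does \emph{not} directly yield $p^s(V^i_{\injj}(\cM)) \subseteq V^i_{\injj}(\cM)$ for $i>0$: a short exact sequence only gives a long exact sequence in $\Tor$, so nonvanishing of $\Tor_i(\cN_{\injj},k(x))$ need not force nonvanishing of $\Tor_i(V^{s,*}\cN_{\injj},k(x))$. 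What \emph{does} work is restricting to the codimension--$i$ components: by Auslander--Buchsbaum on the regular local rings of $\bighat{A}$, a codimension--$i$ point lies in $V^i_{\injj}(\cM)$ iff it is an associated point of $\cN_{\injj}$, and associated points are preserved under the injection $\cN_{\injj} \hookrightarrow V^{s,*}\cN_{\injj}$ and under flat pullback. This gives $p^s(S^i(\cM)) \subseteq S^i(\cM)$ directly, which is all the theorem asserts. Third, for the ``very general'' direction of \autoref{van_and_non_van} you need the \emph{colimit} $\colim_e \Tor_i(V^{es,*}\cN_{\injj}, k(-\alpha))$ to be nonzero, not merely some individual term; your sentence about Tor being nonzero on a dense open does not address the transition maps. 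This is where the argument genuinely requires the finer analysis carried out in \cite[Proposition 3.3.17]{Baudin_Positive_characteristic_generic_vanishing_theory}, so your instinct to cite it there is correct, but your one--line gloss does not capture the actual content.
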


\begin{proof}
	See \cite[Theorems 3.3.5 and 3.3.7, Proposition 3.3.17]{Baudin_Positive_characteristic_generic_vanishing_theory} and \cite[Proposition 6.1]{Pink_Roessler_Manin_Mumford_conjecture} (with their terminology, $\bighat{A}$ is positive since $p^{es} \colon \bighat{A} \to \bighat{A}$ is never an isomorphism on a non--trivial abelian variety). See also \cite[Theorem 1.1]{Hacon_Pat_GV_Geom_Theta_Divs}.
\end{proof}

\begin{rem}\label{exists_V^i_with_codim_i}
	Note that for any Cartier module $\cM$, there exists $i \geq 0$ such that $V^i_{\injj}(\cM)$ has an irreducible component of codimension $i$. Indeed, if $y \in \bighat{A}$ is an associated point of $\cH^0(\FM_A(\cM))_{\injj}$ of codimension $i$, then $-y \in V^i_{\injj}(\cM)$ by definition.
\end{rem}

From now on, we will rather use the shorter notation $H^i_{ss}(A, \cM \otimes \alpha)$ instead of $\lim H^i(A, F^{es}_*\cM \otimes \alpha)$ (see \autoref{easy_semistable_when_k_perfect}). We will need the following in the main proof.

\begin{lem}\label{very_gen_pts_in_V0_implies_in_V0}
	Assume that $k$ is uncountable, let $r \geq 0$, let $\cM$ be a Cartier module on $A$ and let $Z \inc \bighat{A}$ be an irreducible closed subset such that for very general $\alpha \in Z$, \[ H^r_{ss}(A, \cM \otimes \alpha) \neq 0. \] Then $p^{es}(Z) \inc V^r_{\injj}(\cM)$ for some $e \geq 0$. In particular, $p^{es}(Z) \inc V^0_{\injj}(\cM)$.
\end{lem}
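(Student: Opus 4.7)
The plan is to combine the two directions of \autoref{generic vanishing}.\autoref{van_and_non_van} with a Baire-category argument on $Z$ enabled by the uncountability of $k$.

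By definition, the hypothesis rephrases as: very general points of $Z$ lie in $W^r_F(\cM)$. Unpacking ``very general'', there exist dense open subsets $U_n \subseteq Z$ ($n \in \bN$) such that every $\alpha \in \bigcap_n U_n$ satisfies $H^r_{ss}(A,\cM \otimes \alpha) \neq 0$. For each integer $e \geq 0$ define the closed subset
\[ Z_e \coloneqq Z \cap (p^{es})^{-1}\bigl(V^r_{\injj}(\cM)\bigr) \subseteq Z, \]
which is closed because $V^r_{\injj}(\cM) \inc \bighat{A}$ is closed and $p^{es}\colon \bighat{A} \to \bighat{A}$ is a morphism. By the non-vanishing half of \autoref{generic vanishing}.\autoref{van_and_non_van}, for every $\alpha \in \bigcap_n U_n$ there exists some $e = e(\alpha) \geq 0$ with $p^{es}(\alpha) \in V^r_{\injj}(\cM)$, i.e.\ $\alpha \in Z_e$. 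Hence
\[ \bigcap_{n} U_n \;\subseteq\; \bigcup_{e \geq 0} Z_e. \]

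Rewriting $Z = \left(\bigcup_n (Z \setminus U_n)\right) \cup \left(\bigcup_e Z_e\right)$ expresses the irreducible variety $Z$ as a countable union of closed subsets. Since $k$ is uncountable and algebraically closed and $Z$ is an irreducible variety over $k$, $Z$ cannot be written as a countable union of \emph{proper} closed subvarieties (standard fact, e.g.\ induct on $\dim Z$ using that a positive-dimensional variety has uncountably many closed points). Each $Z \setminus U_n$ is proper in $Z$ because $U_n$ is open dense, so some $Z_e$ must equal $Z$. This gives $p^{es}(Z) \inc V^r_{\injj}(\cM)$, proving the first assertion. The ``in particular'' clause then follows immediately by iterating the inclusion $V^{i+1}_{\injj}(\cM) \inc V^i_{\injj}(\cM)$ from \autoref{generic vanishing}.\autoref{itm:gv_inclusions}.

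The main (minor) subtlety is the uncountability step: one has to ensure that the ``very general'' assumption, which only provides a countable intersection of dense opens, together with the \emph{a priori} unbounded exponent $e(\alpha)$ produced by \autoref{generic vanishing}.\autoref{van_and_non_van}, still forces a uniform $e$; this is precisely where the uncountability of $k$ is used, and it is the only place. Everything else is a direct unpacking of definitions and an invocation of the generic vanishing results already stated in the paper.
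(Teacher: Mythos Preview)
Your proof is correct and follows essentially the same approach as the paper: define $Z_e = Z \cap (p^{es})^{-1}(V^r_{\injj}(\cM))$, use \autoref{generic vanishing}.\autoref{van_and_non_van} to cover the very general points of $Z$ by the countable union $\bigcup_e Z_e$, and invoke uncountability of $k$ together with irreducibility of $Z$ to force $Z_e = Z$ for some $e$. The only difference is that you make the Baire-type step more explicit by also listing the complements $Z\setminus U_n$, whereas the paper leaves this implicit.
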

\begin{proof}
	Given $e \geq 0$, let $Z^e \coloneqq (p^{es})^{-1}(V^r_{\injj}(\cM)) \cap Z$. We then obtain that $\bigcup_{e \geq 0} Z^e$ contains a very general point of $Z$ by assumption and \autoref{generic vanishing}.\autoref{van_and_non_van}. Note that $\bigcup_{e \geq 0} Z^e$ is a countable union of closed subsets, so we must have that $Z^e = Z$ for some $e \geq 0$ (recall that $Z$ is irreducible). In particular, $p^{es}(Z) \inc V^r_{\injj}(\cM)$. The last statement follows, since $V^r_{\injj}(\cM) \inc V^0_{\injj}(\cM)$ (see \autoref{generic vanishing}.\autoref{itm:gv_inclusions}).
\end{proof}

It follows from the definitions that if $f \colon A \to B$ is a morphism of abelian varieties and $\cN$ is a $V$--module on $B$, then $f^*\cN$ is a $V$--module on $A$. Let us discuss how to take pushforwards and upper shriek functors too, and what this corresponds to for Cartier modules.

\begin{lem}\label{restriction_of_a_V_module_to_subschemes}
	There exists an isomorphism of functors $V^\flat \cong V^*$. In particular, given a $V$--module $\cM$, the following holds:
	\begin{itemize}
		\item there is an induced adjoint structural morphism $V^s_*\cM \to \cM$, so one can take pushforwards of $V$--modules;
		\item given a finite morphism of abelian varieties $i \colon A \to B$ and a $V$--module $\cM$ on $A$, the object $i^{\flat}\cM$ is a $V$--module on $B$.
		\item we have the usual adjunction $i_* \dashv i^{\flat}$ for $V$--modules.
	\end{itemize}
\end{lem}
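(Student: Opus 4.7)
The plan rests on a single key isomorphism, from which the three bullets follow formally.

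\textbf{Step 1 (the key isomorphism $V^{s,\flat} \cong V^{s,*}$).} Since $V \colon \bighat{A} \to \bighat{A}$ is an isogeny, it is finite and flat. For any finite flat morphism $f$, one has $f^{\flat}(-) \cong f^{*}(-) \otimes \omega_f$ with $\omega_f \coloneqq f^{\flat}\cO$. Since $\omega_{\bighat{A}}$ is trivial, the relative dualizing sheaf $\omega_V \cong \omega_{\bighat{A}} \otimes V^{*}\omega_{\bighat{A}}^{-1}$ is trivial too, so $V^{\flat} \cong V^{*}$. A more canonical formulation: factor the absolute Verschiebung through the relative one $V_{\bighat{A}/k} \colon \bighat{A}^{(p)} \to \bighat{A}$ and use compatibility of $\flat$ with the flat base change $\bighat{A}^{(p)} = \bighat{A} \times_{\Spec k, F_k} \Spec k$, together with the triviality of $(F_k^{-1})^{\flat} \cong (F_k^{-1})^{*}$ on $\Spec k$. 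Iterating yields $V^{s,\flat} \cong V^{s,*}$.

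\textbf{Step 2 (the three bullets).} Given Step 1 and the adjunction $V^{s}_{*} \dashv V^{s,\flat}$, a $V$-module structure $\theta \colon \cM \to V^{s,*}\cM$ corresponds under adjunction to an adjoint morphism $V^{s}_{*}\cM \to \cM$, proving the first bullet. For the second, naturality of Verschiebung — obtained by dualizing the naturality square $F_B \circ i = i^{(p)} \circ F_A$ of the relative Frobenius and invoking perfectness of $k$ to identify $\bighat{A}^{(p)} \cong \bighat{A}$ and $\bighat{B}^{(p)} \cong \bighat{B}$ — gives a commutative square of the form $V \circ j = j \circ V$ for the relevant (dual) morphism $j$. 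Applying $j^{\flat}$ to the structural map of $\cM$ and invoking the base-change identification $j^{\flat} V^{s,\flat} \cong V^{s,\flat} j^{\flat}$ for finite morphisms, combined with Step 1, furnishes a $V$-module structure on $j^{\flat}\cM$. The third bullet is then formal: the adjunction $j_{*} \dashv j^{\flat}$ on coherent sheaves lifts to $V$-modules because the unit and counit commute with the transported structural maps, by the same naturality of Verschiebung.

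\textbf{Step 3 (main obstacle).} All genuine content sits in Step 1; everything else is diagram chasing. The delicate point is ensuring that the natural isomorphism $V^{\flat} \cong V^{*}$, when combined with the base-change identification $j^{\flat} V^{\flat} \cong V^{\flat} j^{\flat}$ used in the second bullet, gives a genuinely canonical $V$-module structure on $j^{\flat}\cM$, rather than one depending on auxiliary choices. Once this compatibility is set up cleanly, the closure of these structures under composition and the adjunction of the third bullet follow immediately.
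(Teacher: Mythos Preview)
Your proof is correct and follows essentially the same approach as the paper. The paper reduces to showing $V^{\flat}\cO_{\bighat{A}} \cong \cO_{\bighat{A}}$ (citing Stacks tag 0E4K) and deduces this from the commutative square of $V$ over $F_k^{-1}$ together with $\pi^!\cO_{\Spec k} = \cO_{\bighat{A}}[g]$ and $(F_k^{-1})^!\cO_{\Spec k} = \cO_{\Spec k}$; this is exactly your ``more canonical formulation'', and the paper then dismisses the three bullets as immediate consequences, which you spell out.

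One small caution: your first formulation in Step~1, invoking the formula $\omega_V \cong \omega_{\bighat{A}} \otimes V^{*}\omega_{\bighat{A}}^{-1}$, is not self-evidently valid because $V$ is not a $k$-morphism; that identity rests on $V^{\flat}\omega_{\bighat{A}} \cong \omega_{\bighat{A}}$, which is precisely what the commutative square establishes. Since you immediately supply that square as the ``more canonical formulation'', there is no gap, but the first sentence alone would be circular. Your Step~3 observation about non-canonicity of the isomorphism $V^{\flat} \cong V^{*}$ is apt: the paper raises exactly this point right after the lemma and resolves it in the subsequent Lemmas~\ref{natural_transfo_pullback} and~\ref{multiplication_by_cst_does_not_change_Cartier_or_V}.
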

\begin{proof}
	By \stacksproj{0E4K}, it is enough to show that $V^\flat\cO_A \cong \cO_A$ to prove that $V^{\flat} \cong V^*$. This former fact follows
	from the commutativity of the diagram 
	\[ \begin{tikzcd}
		A \arrow[r, "V"] \arrow[d, "\pi"'] & A \arrow[d, "\pi"] \\
		\Spec k \arrow[r, "F^{-1}"]        & \Spec k,        
	\end{tikzcd} \] together with the facts that $\pi^!\cO_{\Spec k} = \cO_A[g]$ and that $(F_k^{-1})^!\cO_{\Spec k} = \cO_{\Spec k}$. The statements after ``In particular'' are immediate consequences.
\end{proof}
\begin{rem}\label{rem:upper_shriek_and_pullback_are_equal_for_isogeny}
	A similar argument shows that for any isogeny $\phi \colon A \to B$ of abelian varieties, there is an isomorphism $\phi^* \cong \phi^{\flat}$. The same also holds for the Frobenius, so we can also take pullbacks of Cartier modules under morphisms between abelian varieties.
\end{rem}

	Note that there is no preferred isomorphism $V^{\flat} \cong V^*$ a priori, so our construction above seems to depend on choices. We will discuss this subtelty after the proof of \autoref{basic facts about AV_not_so_basic}, but for now let us draw some properties and consequences from this construction.

\begin{lem}\label{support_of_pullback}
	Let $\pi \colon A \to B$ be a surjective morphism of abelian varieties, and let $\cM$ be a Cartier module on $B$. Then $\Supp_{\crys}(\pi^*\cM) = \pi^{-1}(\Supp_{\crys}(\cM))$.
\end{lem}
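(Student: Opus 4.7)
The plan is to prove both inclusions of the claimed set equality separately. Both $\Supp_{\crys}(\pi^*\cM)$ and $\pi^{-1}(\Supp_{\crys}(\cM))$ are closed subsets of $A$ (the former by \autoref{rem_support_crystal}, the latter by continuity of $\pi$), so it is enough to check containment pointwise. The key technical input will be flatness: $\pi$ is smooth (being a surjective morphism of abelian varieties), so it is flat and of finite type, hence open, and its restriction $\pi|_V \colon V \to \pi(V)$ to any open $V \subseteq A$ is faithfully flat onto its image.

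For $\Supp_{\crys}(\pi^*\cM) \subseteq \pi^{-1}(\Supp_{\crys}(\cM))$, I would argue by contraposition: if $\pi(x) \notin \Supp_{\crys}(\cM)$, there exist an open neighborhood $U$ of $\pi(x)$ in $B$ and $N \geq 1$ with $\theta_{\cM}^N \colon F^{Ns}_*\cM|_U \to \cM|_U$ equal to zero. Under the construction of the Cartier pullback from \autoref{rem:upper_shriek_and_pullback_are_equal_for_isogeny} (which uses the identification $F^* \cong F^\flat$ on abelian varieties to convert the Cartier structure into a form that pulls back along $\pi$), the iterated structural morphism $\theta_{\pi^*\cM}^N$ identifies canonically with $\pi^*(\theta_{\cM}^N)$, and so vanishes on the open set $\pi^{-1}(U) \ni x$. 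Hence $x \notin \Supp_{\crys}(\pi^*\cM)$.

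For the reverse inclusion $\pi^{-1}(\Supp_{\crys}(\cM)) \subseteq \Supp_{\crys}(\pi^*\cM)$, I would again contrapose. If $x \notin \Supp_{\crys}(\pi^*\cM)$, choose an open $V \ni x$ in $A$ and $N \geq 1$ with $\theta_{\pi^*\cM}^N|_V = 0$. Openness of $\pi$ makes $\pi(V)$ an open neighborhood of $\pi(x)$ and $\pi|_V \colon V \to \pi(V)$ faithfully flat; combining this with the identification $\theta_{\pi^*\cM}^N \cong \pi^*(\theta_{\cM}^N)$ forces $\theta_{\cM}^N|_{\pi(V)} = 0$, so $\pi(x) \notin \Supp_{\crys}(\cM)$ and $x \notin \pi^{-1}(\Supp_{\crys}(\cM))$.

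The main obstacle is establishing that the Cartier structure on $\pi^*\cM$ can be set up so that $\theta_{\pi^*\cM}^N$ truly identifies with $\pi^*(\theta_{\cM}^N)$, letting faithful flatness do its job. For isogenies this is immediate from $\pi^* \cong \pi^\flat$; for a general surjective morphism, one either factors $\pi$ through the quotient by the identity component of its kernel (reducing to an isogeny composed with a smooth abelian-scheme fibration) or works uniformly with the adjoint form of the structural morphism afforded by the identification $F^* \cong F^\flat$, whose pullback is functorial by construction.
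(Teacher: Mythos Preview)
Your proposal is correct and follows essentially the same route as the paper, which dispatches the lemma in a single line: ``This follows by faithful flatness of $\pi$.'' Your argument is a careful unpacking of precisely this, working with open neighborhoods and the adjoint form of the structural morphism so that pullback commutes. One minor slip: a surjective morphism of abelian varieties need not be smooth in positive characteristic (the relative Frobenius is a counterexample), but it is always flat by miracle flatness (regular source and target, equidimensional fibers), and flatness plus surjectivity is all you actually use.
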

\begin{proof}
	This follows by faithful flatness of $\pi$.
\end{proof}

As an application of taking the functor $i^{\flat}$ on $V$--modules, we have the following:

\begin{lem}\label{filtration_for_V_modules_supp_at_0}
	Let $\cN$ be a $V$--module on $\bighat{A}$ supported at $0$. Then there is a filtration $0 = \cN_0 \inc \cN_1 \inc \dots \inc \cN_n = \cN$ of $V$--modules, such that for all $0 \leq i \leq n - 1$, $\cN_{i + 1}/\cN_i \cong k(0)$, endowed with either the zero $V$--module structure or with its canonical $V$--module structure (coming from the Cartier structure of $\omega_A$ via $\FM_A$).
\end{lem}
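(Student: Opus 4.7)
The plan is to proceed by induction on the length $n$ of $\cN$ as a module over $R \coloneqq \cO_{\bighat{A}, 0}$. The case $n = 0$ is vacuous; for $n \geq 1$, the strategy is to produce a length-one sub-$V$-module $\cN_1 \subseteq \cN$ whose induced $V$-structure is either zero or canonical, and then to conclude by applying the induction hypothesis to the shorter $V$-module $\cN/\cN_1$ and lifting the resulting filtration back to $\cN$.

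First I would show that $\operatorname{Soc}(\cN) \coloneqq \{x \in \cN : \fm_0 x = 0\}$ is a sub-$V$-module of $\cN$. For $x \in \operatorname{Soc}(\cN)$, $\cO_{\bighat{A}}$-linearity of $\theta$ yields $\fm_0 \theta(x) = 0$, hence $\theta(x) \in \operatorname{Soc}(V^{s,*}\cN)$, so it suffices to verify $\operatorname{Soc}(V^{s,*}\cN) \subseteq V^{s,*}\operatorname{Soc}(\cN)$. Using the adjunction $V^s_* \dashv V^{s,\flat} \cong V^{s,*}$ from \autoref{restriction_of_a_V_module_to_subschemes} together with the identification $V^s_*\, k(0) \cong k(0)$ (which holds because $V$ covers $F^{-1}$ on $\Spec k$, making $x \mapsto x^{p^s}$ into an $R$-linear isomorphism once $k$ is algebraically closed), one computes
\[ \operatorname{Soc}(V^{s,*}\cN) \cong \Hom_R(k(0), V^{s,*}\cN) \cong \Hom_R(V^s_*\, k(0), \cN) \cong \operatorname{Soc}(\cN). \]
Meanwhile faithful flatness of $V$ gives an injection $\operatorname{Soc}(\cN) \hookrightarrow V^{s,*}\operatorname{Soc}(\cN) \subseteq V^{s,*}\cN$ via $n \mapsto 1 \otimes n$, whose image lies inside $\operatorname{Soc}(V^{s,*}\cN)$; the dimension count above forces this image to equal $\operatorname{Soc}(V^{s,*}\cN)$, yielding the desired inclusion.

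Next I would produce a length-one sub-$V$-module inside $\operatorname{Soc}(\cN)$. Tracking the tensor relation $c(1 \otimes n) = c \otimes n = 1 \otimes c^{p^s}n$ through the identification $\operatorname{Soc}(V^{s,*}\operatorname{Soc}(\cN)) \cong \operatorname{Soc}(\cN)$ established above, the restriction $\theta|_{\operatorname{Soc}(\cN)}$ is encoded as a Frobenius-semilinear operator $\widetilde A$ on the finite-dimensional $k$-vector space $\operatorname{Soc}(\cN)$, i.e.\ $\widetilde A(cv) = c^{p^s}\widetilde A(v)$. A length-one sub-$V$-module of $\operatorname{Soc}(\cN)$ is exactly an $\widetilde A$-stable line, i.e.\ a nonzero $v$ with $\widetilde A(v) = \lambda v$ for some $\lambda \in k$. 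Such a $v$ always exists over the algebraically closed field $k$: if $\ker \widetilde A \neq 0$, take any nonzero vector there (with $\lambda = 0$); otherwise $\widetilde A$ is bijective and the system $Av^{(p^s)} = v$ (in any chosen basis) defines a finite étale subscheme of $\mathbb{A}^d_k$ of cardinality $p^{sd}$, which therefore contains a nonzero $k$-point.

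Finally, one checks that $\cN_1 \cong k(0)$ carries either the zero or canonical $V$-structure. The same adjunction yields $\Hom_R(k(0), V^{s,*}k(0)) \cong \Hom_R(k(0), k(0)) = k$, so $V$-structures on $k(0)$ form a one-dimensional $k$-vector space. The group $\Aut_{\cO}(k(0)) = k^\times$ acts on this line by a Frobenius-twisted rescaling which, by algebraic closedness of $k$, is transitive on nonzero elements; hence there are exactly two isomorphism classes of $V$-structures, and the nonzero one coincides with the canonical one (the canonical structure is itself nonzero, being the image under $\FM_A$ of the nonzero Grothendieck trace $F^s_*\omega_A \to \omega_A$). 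The hard part will be the first step: the inclusion $\operatorname{Soc}(V^{s,*}\cN) \subseteq V^{s,*}\operatorname{Soc}(\cN)$ is not automatic for an arbitrary finite flat morphism, and its validity here depends crucially on the identification $V^{s,*} \cong V^{s,\flat}$ from \autoref{restriction_of_a_V_module_to_subschemes} to obtain a dimension-preserving adjunction.
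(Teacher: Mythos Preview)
Your overall strategy---pass to the socle, find a stable line, and induct---is essentially the paper's, only phrased by hand rather than through the formalism of $i_0^\flat$. The paper observes that the socle is literally $i_{0,*}i_0^\flat\cN$, so its $V$--module structure comes for free from \autoref{restriction_of_a_V_module_to_subschemes}; restricting to $\Spec k(0)$ turns the problem into one about Frobenius modules on a field, where the structure theorem (Mumford, Corollary p.~143) decomposes the whole socle at once. Your Steps~2 and~3 amount to reproving the relevant piece of that structure theorem.

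There is, however, a concrete error in your Step~1. The claim that the image of $n \mapsto 1\otimes n$ lies in $\operatorname{Soc}(V^{s,*}\cN)$ is false whenever $V$ is not \'etale at~$0$, i.e.\ whenever $A$ is not ordinary. For instance, take $g=1$ with $A$ supersingular, so locally $\varphi(t)=ut^p$; with $\cN=R/t^2$ one finds $V^{s,*}\cN\cong R/t^{2p}$, and $1\otimes t$ corresponds to $ut^p$, which is not in the socle $t^{2p-1}R/t^{2p}$. (Relatedly, the tensor identity $c(1\otimes n)=1\otimes c^{p^s}n$ you invoke only holds modulo $\varphi(\fm)R$, not modulo $\fm$.) Your desired inclusion $\operatorname{Soc}(V^{s,*}\cN)\subseteq V^{s,*}\operatorname{Soc}(\cN)$ is nevertheless true: apply your adjunction identity to the inclusion $\operatorname{Soc}(\cN)\hookrightarrow\cN$ and use \emph{naturality} to see that $\operatorname{Soc}(V^{s,*}\operatorname{Soc}(\cN))\to\operatorname{Soc}(V^{s,*}\cN)$ is already an isomorphism, which forces the containment by the dimension count you already have. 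The same naturality is what makes your Step~2 work (it identifies $\widetilde A$--stable lines with length--one sub--$V$--modules); once this repair is made, the rest of your argument goes through.
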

\begin{proof}
	Throughout, we will use \autoref{restriction_of_a_V_module_to_subschemes} without further mention. Recakk that $i_0 \colon \Spec k(0) \to \bighat{A}$ denote the inclusion of 
	$0 \in \bighat{A}$. In order to avoid confusions, for this argument only, we shall write $i_{0, *}k(0)$ to denote the module on $\bighat{A}$, while $k(0)$ stands for the module on the field $k(0) = k$.
	
	We show this by induction on the length of $\cN$. We have an exact sequence \[ 0 \to i_{0, *}i_0^{\flat}\cN \to \cN \to \cC \to 0, \] where $\length(\cC) < \length(\cN)$. The $V$--module $i_{0, *}i_0^{\flat}\cN$ is the pushforward of a $V$--module on $k(0)$, i.e. a Frobenius module on $k(0)$. By the structure theorem of Frobenius modules over an algebraically closed field (see \cite[Corollary p.143]{Mumford_Abelian_Varieties}), we know that $i_0^{\flat}\cN \cong (k(0), \phi)^{\oplus n_1} \oplus (k(0)^{\oplus n_2}, \psi)$, where $\phi$ is the canonical Frobenius module structure on $k(0)$, and $\psi$ is nilpotent. Note that $(k(0)^{\oplus n_2}, \psi)$ has a filtration by sub--Frobenius modules, where each quotient is the trivial Frobenius module $(k(0), 0)$. 
	
	Thus, the result holds for $i_{0, *}i_0^{\flat}\cN$. Since it also holds for $\cC$ by induction, the proof is complete.
\end{proof}

The following lemma will be used in the main proof to prove that certain abelian varieties are ordinary. Recall that the $p$--rank of $A$ is denoted $r(A)$.

\begin{lem}\label{basic facts about AV_not_so_basic}
	For any integer $0 \leq j \leq g$, the following are equivalent:
	\begin{enumerate}
		\item\label{basic_facts_AV_itm:there_exists} there exists a $V$--module $\cN$ such that $\Supp(\cN) = \{0\}$ and \[ \colim \Tor_j(V^{es, *} \cN, k(0)) \neq 0; \] 
		\item\label{basic_facts_AV_itm:for_all} for any non-nilpotent $V$--module $\cN$ such that $\Supp(\cN) = \{0\}$, we have \[ \colim \Tor_j(V^{es, *} \cN, k(0)) \neq 0; \] 
		\item\label{basic_facts_AV_itm:p_rank} $r(A) \geq g - j$.
	\end{enumerate}
\end{lem}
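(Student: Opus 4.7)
The approach is to translate everything via the Fourier--Mukai equivalence from \autoref{equivalence_V_crystals_and_Cartier_crystals}. A $V$-module $\cN$ on $\bighat{A}$ with $\Supp(\cN) = \{0\}$ corresponds under Fourier--Mukai to a Cartier module $\cM$ on $A$ which is a unipotent vector bundle (an iterated extension of $\omega_A$; cf.\ \autoref{properties_Fourier_Mukai_transform}.\autoref{itm:equiv_cat_unipotent}), with $k(0)$ endowed with its canonical $V$-module structure matching $\omega_A$ endowed with its canonical Cartier structure. Non-nilpotence of $\cN$ corresponds to $\cM \not\sim_C 0$. Specializing the cohomology formula to $\alpha = \cO_A$ yields
\[
(\colim \Tor_j(V^{es,*}\cN, k(0)))^\vee \cong H^j_{ss}(A, \cM).
\]
Moreover, Serre duality together with the exterior-algebra decomposition $H^*(A, \cO_A) = \bigwedge^* H^1(A, \cO_A)$ gives $\dim H^j_{ss}(A, \omega_A) = \binom{r(A)}{g - j}$, so condition $(3)$ is precisely $H^j_{ss}(A, \omega_A) \neq 0$. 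With this translation, $(3) \Rightarrow (1)$ follows by taking $\cN = k(0)^{\mathrm{canonical}}$ (hence $\cM = \omega_A$), and $(2) \Rightarrow (1)$ is immediate since this $\cN$ is non-nilpotent.

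For $(1) \Rightarrow (3)$ I would argue by contrapositive. If $r(A) < g - j$, then both $\binom{r(A)}{g - j}$ and $\binom{r(A)}{g - j + 1}$ vanish, so $H^j_{ss}$ and $H^{j - 1}_{ss}$ are zero on $\omega_A$ with either its canonical or zero Cartier structure. Applying the filtration of \autoref{filtration_for_V_modules_supp_at_0} to $\cN$ and transporting it via Fourier--Mukai to a filtration of $\cM$ whose successive quotients are of these two types, and using that the ``unit part'' functor on Cartier modules over the perfect field $k$ is exact by Mumford's structure theorem (see \cite[Corollary p.143]{Mumford_Abelian_Varieties}), I get an exact long exact sequence in $H^*_{ss}$ attached to each extension. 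A straightforward induction on the length of the filtration then yields $H^j_{ss}(A, \cM) = 0$ for every such $\cM$, contradicting $(1)$.

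For $(3) \Rightarrow (2)$ the same filtration is the starting point: let $i_0$ be the first step with $\cM_{i_0}/\cM_{i_0 - 1} = \omega_A^{\mathrm{canonical}}$, which exists by non-nilpotence of $\cM$. All earlier quotients are zero-Cartier, so $\cM_{i_0 - 1}$ is nilpotent, and the LES gives $H^j_{ss}(A, \cM_{i_0}) \cong H^j_{ss}(A, \omega_A) \neq 0$ by $(3)$. The delicate point is propagation through the later steps: zero-Cartier quotients give isomorphisms in $H^*_{ss}$, but a canonical quotient could a priori introduce a connecting morphism that annihilates the non-vanishing. The cleanest way to rule this out is to pass to the dual constructible $\bF_{p^s}$-\'etale sheaf picture via \cite[Theorem 5.2.7]{Baudin_Duality_between_perverse_sheaves_and_Cartier_crystals} and Serre duality: the question becomes showing that $H^{g - j}_{\et}(A, \cF) \neq 0$ whenever $\cF$ is a non-zero successive extension of the constant $\bF_{p^s}$-sheaf on $A$, and this follows by induction on the length of the filtration of $\cF$ together with the Koszul structure of $H^*_{\et}(A, \bF_{p^s}) \cong \bigwedge^* \bF_{p^s}^{r(A)}$ controlling cup products with extension classes.

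The main obstacle is this propagation step in $(3) \Rightarrow (2)$: without the \'etale-sheaf reformulation, one must carefully verify that the connecting morphisms at each canonical-quotient extension step do not cancel the non-vanishing established at step $i_0$. Once the Serre-type duality $\dim H^j_{ss}(A, \cM) = \dim H^{g - j}_{\et}(A, \cF)$ is in place, the problem reduces to a transparent Koszul computation in the exterior algebra $\bigwedge^* \bF_{p^s}^{r(A)}$.
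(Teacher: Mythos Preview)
Your route is different from the paper's: you translate everything to the Cartier side via \autoref{equivalence_V_crystals_and_Cartier_crystals} and work with $H^j_{ss}(A,\cM)$, whereas the paper stays on the $\Tor$/$V$-module side throughout and simply invokes the filtration of \autoref{filtration_for_V_modules_supp_at_0} together with the long exact sequence in $\Tor$, reducing both non-trivial implications to the single computation $\colim\Tor_j(V^{es,*}k(0)^{\mathrm{can}},k(0))\neq 0 \Longleftrightarrow r(A)\ge g-j$ from \cite[Example~3.4.1]{Baudin_Positive_characteristic_generic_vanishing_theory}. For $(2)\Rightarrow(1)$ and $(1)\Rightarrow(3)$ the two approaches coincide in substance (and your detour through $H^{j-1}_{ss}$ is unnecessary: vanishing of $H^j_{ss}$ on each graded piece already forces $H^j_{ss}(\cM)=0$ by the exact sequence).

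You are right to single out $(3)\Rightarrow(2)$ as the delicate direction. The paper's proof here is quite terse: it asserts that the filtration and the long exact sequence reduce the claim to $\cN=k(0)^{\mathrm{can}}$, but as you observe, a connecting homomorphism coming from a canonical-quotient step could in principle kill a nonzero class, so the d\'evissage is not symmetric in the two directions; the paper is implicitly relying on the cited Example (or on an argument of the sort you outline) to close this. Your proposed remedy is correct in spirit but also not complete as written: the identification $\dim H^j_{ss}(A,\cM)=\dim H^{g-j}_{\et}(A,\cF)$ is asserted rather than derived from the references (you would need to check that the equivalence of \cite[Theorem~5.2.7]{Baudin_Duality_between_perverse_sheaves_and_Cartier_crystals} intertwines $R\Gamma$ in the expected shifted way), and the final ``Koszul structure controlling cup products with extension classes'' is a slogan rather than a proof. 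A clean way to finish your \'etale reformulation is to note that a nonzero unipotent $\bF_{p^s}$-local system $\cF$ on $A$ is the same as a nonzero finite-length module over $\bF_{p^s}[[T_1,\dots,T_{r(A)}]]$ via $\pi_1^{\et}(A)^{(p)}\cong\bZ_p^{r(A)}$, and then $H^i_{\et}(A,\cF)=\Tor_i(\bF_{p^s},\cF)$ over this regular local ring is nonzero for every $0\le i\le r(A)$ by rigidity of $\Tor$ (Auslander--Buchsbaum: an Artinian module has projective dimension exactly $r(A)$, and over a regular local ring $\Tor_i=0$ forces $\Tor_{i+1}=0$). This is really what your ``Koszul'' sentence is pointing at.
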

\begin{proof}
	The implication \autoref{basic_facts_AV_itm:for_all} $\implies$ \autoref{basic_facts_AV_itm:there_exists} is immediate. Let us show that \autoref{basic_facts_AV_itm:p_rank} $\implies$ \autoref{basic_facts_AV_itm:for_all}. Thanks to \autoref{filtration_for_V_modules_supp_at_0} and long exact sequences in homology, it is enough to show the result when $\cN = k(0)$, together with its canonical $V$--module structure. This is contained in \cite[Example 3.4.1]{Baudin_Positive_characteristic_generic_vanishing_theory}.
	
	We are left to show that \autoref{basic_facts_AV_itm:there_exists} $\implies$ \autoref{basic_facts_AV_itm:p_rank}. Consider a filtration as in \autoref{filtration_for_V_modules_supp_at_0}. Again by the long exact sequence in homology, we obtain that \[ \colim \Tor_j(V^{es, *}k(0), k(0)) \neq 0, \] where the $V$-structure on $k(0)$ corresponds to the canonical Cartier module structure on $\omega_A$. We conclude again by  \cite[Example 3.4.1]{Baudin_Positive_characteristic_generic_vanishing_theory}.
\end{proof}

Our final goal in this section is to show certain compatibilities related with the constructions from \autoref{restriction_of_a_V_module_to_subschemes} and \autoref{rem:upper_shriek_and_pullback_are_equal_for_isogeny}. However, we face an issue: there does not seem to be a canonical choice for an isomorphism of functors $V^{\flat} \cong V^*$ (or $F$ or any isogeny). The way we obtained it in the proof of \autoref{restriction_of_a_V_module_to_subschemes} was through the choice of an isomorphism $\cO_A \to \omega_A$ of coherent sheaves.

The reason why this is problematic is that given a morphism $f \colon A \to B$ of abelian varieties and a $V$--module $\cN$ on $A$, the induced $V$--module $f_*\cN$ on $B$ might have different structural morphisms depending on our choice of trivializations of $\omega_A$ and $\omega_B$. In order to show certain compatibilities, we will need to be extra careful with this.

First of all, note that two different isomorphisms $V^{\flat} \cong V^*$ differ by an isomorphism $V^* \to V^*$. Let us study these.

\begin{lem}\label{natural_transfo_pullback}
	Let $X$ be a Noetherian $\bF_p$--scheme, and let $F^{s, *} \to F^{s, *}$ be a natural transformation on $\Coh_X$. Then it is given by multiplication by an element of $H^0(X, \cO_X)$. The same holds if we replace $F^s$ by $V^s$ on $X = \bighat{A}$.
\end{lem}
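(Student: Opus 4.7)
The plan is to argue by a Yoneda-style reduction. Since $F^{s,*}\cO_X = \cO_X$ canonically, evaluating a natural transformation $\eta \colon F^{s,*} \to F^{s,*}$ at the structure sheaf gives an $\cO_X$-linear endomorphism of $\cO_X$ (we use that morphisms in $\Coh_X$ are $\cO_X$-linear, so $\eta_\cF$ is $\cO_X$-linear for every $\cF$). Any such endomorphism is multiplication by a unique element $a \in H^0(X, \cO_X)$. The claim to prove is then that $\eta_\cF = a \cdot \id$ for every coherent $\cF$.

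First, I would handle the case of free sheaves $\cO_X^n$. For each $i$, naturality of $\eta$ with respect to the inclusion $\iota_i \colon \cO_X \to \cO_X^n$ gives $\eta_{\cO_X^n} \circ F^{s,*}\iota_i = F^{s,*}\iota_i \circ \eta_{\cO_X} = F^{s,*}\iota_i \circ (a \cdot \id)$. Similarly, naturality with respect to the projections $\pi_i \colon \cO_X^n \to \cO_X$ gives $\pi_i \circ \eta_{\cO_X^n} = a \cdot \pi_i$. Combining these across all $i$ forces $\eta_{\cO_X^n} = a \cdot \id$. For a general coherent sheaf $\cF$, the question is local on $X$, so after restricting to a sufficiently small open $U$ we may choose a presentation $\cO_U^m \to \cO_U^n \to \cF|_U \to 0$. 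Since $F^{s,*}$ is right exact, naturality of $\eta$ with respect to the surjection $\cO_U^n \to \cF|_U$ together with the previous step forces $\eta_{\cF|_U} = a|_U \cdot \id$. These local identifications patch (again by naturality, or by uniqueness of $a$), yielding the claim globally.

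There is really no serious obstacle: the argument is the standard ``natural self-transformation of a right exact $\cO_X$-linear functor is multiplication by a global section'' trick. The only thing to check in the $V^s$ case is that it uses nothing specific to $F$: we need only that $V$ is a morphism of schemes (so that $V^{s,*}\cO_{\bighat{A}} = \cO_{\bighat{A}}$) and that $V^{s,*}$ is a right exact $\cO_{\bighat{A}}$-linear functor on $\Coh_{\bighat{A}}$. Both hold, so the identical argument applies verbatim.
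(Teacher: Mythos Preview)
Your overall strategy---extract $a \in H^0(X,\cO_X)$ from $\eta_{\cO_X}$, propagate to free sheaves by naturality, then use right exactness and presentations---is the same as the paper's. There is, however, a real gap in your localization step. You invoke ``naturality of $\eta$ with respect to the surjection $\cO_U^n \to \cF|_U$'', but this is a morphism in $\Coh_U$, not in $\Coh_X$, and $\eta$ is only assumed natural on $\Coh_X$; you are not entitled to apply it. Checking $\eta_\cF = a\cdot\id$ is indeed local, but producing the input to that check requires naturality for local morphisms, which you have not justified. The paper fills this by first extending $\eta$ to $\QCoh_X$ (possible since $F^{s,*}$ preserves colimits and every quasi-coherent sheaf on a Noetherian scheme is a filtered colimit of coherent ones), and then, for an open immersion $j\colon U \hookrightarrow X$ and $\cM \in \QCoh_U$, applying $\eta$ to $j_*\cM$ and restricting back; since $F$ is the identity on the underlying space, $j^*F^{s,*}j_*\cM = F^{s,*}\cM$, so this yields a compatible family of natural transformations on each $\Coh_U$. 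After that, your argument and the paper's are identical.

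For the $V^s$ case, your claim that ``the identical argument applies verbatim'' is too quick. The paper does \emph{not} rerun the Frobenius argument; it reduces to it via the Fourier--Mukai equivalence (flatness of $V^s$ plus \autoref{properties_Fourier_Mukai_transform}.\autoref{itm:equiv_cat} and \autoref{properties_Fourier_Mukai_transform}.\autoref{itm:behaviour_pushforwards_and_pullbacks}). The reason is exactly the localization issue: the $j_*$ trick above uses that $F$ is the identity on points, which fails for $V$, so one cannot transport $\eta$ to $\Coh_U$ the same way. A direct argument for $V^s$ is still possible---for instance, since $V^s$ is flat, $V^{s,*}$ is exact, and on the projective variety $\bighat{A}$ one can use injections $\cO(-n)\hookrightarrow \cO$ and global resolutions by sums of $\cO(-n)$ to propagate $\eta_{\cO}=a$ to all of $\Coh_{\bighat{A}}$ without ever localizing---but this is not the same argument, and you should make it explicit rather than asserting the Frobenius proof carries over.
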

\begin{proof}
	By flatness of $V^s$ and points \autoref{itm:equiv_cat} and \autoref{itm:behaviour_pushforwards_and_pullbacks} of \autoref{properties_Fourier_Mukai_transform}, it is enough to prove the first statement. Since $F^{s, *}$ preserves colimits, we automatically obtain an induced natural transformation $F^{s, *} \to F^{s, *}$ on $\QCoh_X$. If $j \colon U \inj X$ is an open immersion and $\cM \in \QCoh_U$, we then obtain a map $F^{s, *}(j_*\cM) \to F^{s, *}(j_*\cM)$, and therefore by restriction a map $F^{s, *}\cM \to F^{s, *}\cM$. We thus obtain such a natural transformation for all open $U \inc X$ as well, compatible with restrictions to open subsets.
	
	Since there is a canonical isomorphism $F^{s, *}\cO_X \cong \cO_X$, the map $F^{s, *}\cO_X \to F^{s, *}\cO_X$ corresponds to an element $\lambda_X \in H^0(X, \cO_X)$. The same is also true for any open $U \inc X$, and by naturality and compatibility, $\lambda_X|_U = \lambda_U$ (we will denote this element by $\lambda$). We will show that $F^{s, *} \to F^{s, *}$ is given by multiplication by $\lambda$. This is a local question, and we may therefore assume that $X = \Spec R$ is affine. Note that if $\cM \surj \cN$ is a surjection and we already know that $F^{s, *}\cM \to F^{s, *}\cM$ is given by multiplication by $\lambda$, then the same holds for $F^{s, *}\cN \to F^{s, *}\cN$. Since we know the result for $\cO_X$, we know it for $\cO_X^{\oplus n}$ for all $n \geq 1$ (a natural transformation must preserve splittings by definition), and therefore we know it for any globally generated module by the previous sentence. Since $X$ is affine, the proof is complete.
\end{proof}

\begin{lem}\label{multiplication_by_cst_does_not_change_Cartier_or_V}
	Let $(\cM, \theta)$ be a Cartier module on a Noetherian scheme $X$ over $k$. Given $\lambda \in k^{\times}$, consider the Cartier module $(\cM, \theta')$ where $\theta'(m) \coloneqq \theta(\lambda m)$. Then there is a natural isomorphism $(\cM, \theta) \to (\cM, \theta')$.
	
	The same holds for $V$--modules on $\bighat{A}$.
\end{lem}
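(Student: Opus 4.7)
The plan is to take the candidate isomorphism $\phi \colon (\cM, \theta) \to (\cM, \theta')$ to be multiplication by a suitable constant $\mu \in k^{\times}$, leveraging that $k$ is algebraically closed in order to find such a $\mu$. Concretely, I would set $\phi \coloneqq \mu \cdot \id_{\cM}$ and then ask for which $\mu$ the required compatibility $\phi \circ \theta = \theta' \circ F^s_*\phi$ of maps $F^s_*\cM \to \cM$ holds.

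Evaluating both sides on a local section $m$ of $F^s_*\cM$ yields $\mu\,\theta(m)$ on the left and $\theta(\lambda\mu\, m)$ on the right. Now $\theta$ is $\cO_X$-linear as a morphism $F^s_*\cM \to \cM$, which concretely means $\theta(a^{p^s} x) = a\,\theta(x)$ for every local section $a$ of $\cO_X$. Since $k$ is algebraically closed, I may write $\lambda\mu = c^{p^s}$ for some $c = (\lambda\mu)^{1/p^s} \in k$, so the right--hand side becomes $(\lambda\mu)^{1/p^s}\,\theta(m)$. Matching the two sides thus reduces to the single scalar condition
\[ \mu^{p^s - 1} = \lambda, \]
which has a solution in $k^{\times}$ by algebraic closedness. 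For such a $\mu$, the map $\phi = \mu \cdot \id_{\cM}$ is visibly an $\cO_X$-linear automorphism (with inverse $\mu^{-1} \cdot \id$), and by construction it intertwines $\theta$ with $\theta'$, yielding the desired isomorphism of Cartier modules.

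For the $V$-module case I would apply exactly the same strategy: try $\phi = \mu \cdot \id_{\cN}$ and impose $V^{s, *}\phi \circ \theta = \theta' \circ \phi$. The subtlety to track is that pulling back multiplication by $\mu \in k^\times$ along $V^s$ gives multiplication by $\mu^{1/p^s}$ on $V^{s, *}\cN$, because $V$ sits over $F^{-1}$ on $\Spec k$. The resulting compatibility once again reduces to an explicit scalar equation in $\mu$ and $\lambda$, solvable in $k^{\times}$ by algebraic closedness.

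The only real obstacle is the semi--linearity bookkeeping, which has no conceptual content. Morally, the lemma is the observation that rescaling the structural map of a Cartier (or $V$-) module by an element of $k^{\times}$ is absorbed by an automorphism of the underlying coherent sheaf — precisely the kind of ambiguity that the preceding \autoref{natural_transfo_pullback} was designed to isolate, and which the authors need to keep under control before discussing the compatibilities of $V^{\flat} \cong V^*$.
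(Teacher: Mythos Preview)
Your argument for the Cartier module case is correct and essentially identical to the paper's: both take $\mu$ a $(p^s-1)$'th root of $\lambda$ and check that multiplication by $\mu$ intertwines $\theta$ with $\theta'$.

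For the $V$--module case your route diverges from the paper's. You redo the computation directly, tracking that $V$ lies over $F^{-1}$ so that $V^{s,*}(\mu\cdot\id) = \mu^{1/p^s}\cdot\id$; the resulting scalar equation (which works out to $\mu^{p^s-1} = \lambda^{-p^s}$) is indeed solvable in $k^\times$. The paper instead sidesteps this bookkeeping entirely: it applies $\FM_{\bighat{A}}$ to the $V$--module $\cN$, invokes the already--proven Cartier case on the resulting Cartier module on $A$, and then applies $\FM_A$ to transport the isomorphism back. The paper's approach is cleaner in that it avoids the Frobenius--twist arithmetic you flag as ``the only real obstacle'', at the price of invoking the Fourier--Mukai equivalence; your direct approach is more elementary and self--contained, which is arguably preferable for such a basic lemma.
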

\begin{proof}
	If $\mu$ denotes a $(p^s - 1)$'th root of $\lambda$ (recall that $k$ is separably closed), then multiplication by $\mu$ is an isomorphism of Cartier modules $(\cM, \theta) \to (\cM, \theta')$. For a $V$--module $\cN$, apply the same proof as above to $\FM_{\bighat{A}}(\cN)$ and then apply $\FM_A$.
\end{proof}

Combining \autoref{natural_transfo_pullback} and \autoref{multiplication_by_cst_does_not_change_Cartier_or_V}, we see that the functors we defined on Cartier modules and $V$--modules in \autoref{restriction_of_a_V_module_to_subschemes} and \autoref{rem:upper_shriek_and_pullback_are_equal_for_isogeny} do not depend on any choice. Let us use it to our advantage to obtain certain compatibilities which we will need later.

\begin{lem}\label{annyoing_compatibilities}
	Let $\pi \colon A \to B$ be a morphism of abelian varieties with dual $\bighat{\pi} \colon \bighat{B} \to \bighat{A}$, and let $\cN$ be a $V$--module on $\bighat{B}$. Then there is an isomorphism of Cartier modules \[ \pi^*\cH^0\FM_{\bighat{B}}(\cN) \cong \cH^0\FM_{\bighat{A}}(\bighat{\pi}_*\cN). \]
\end{lem}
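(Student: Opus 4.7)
The plan is to derive the desired isomorphism from the Fourier--Mukai base change formula (\autoref{properties_Fourier_Mukai_transform}.\autoref{itm:behaviour_pushforwards_and_pullbacks}) applied to $\bighat{\pi}\colon \bighat{B} \to \bighat{A}$. Under the canonical double-duality identification, the dual of $\bighat{\pi}$ is $\pi$, so the formula yields a natural isomorphism in $D^b_{\coh}(A)$:
\[
\FM_{\bighat{A}}(R\bighat{\pi}_*\cN) \;\cong\; L\pi^*\FM_{\bighat{B}}(\cN).
\]
By naturality in $\cN$, this intertwines the Cartier module structures on both sides induced by the $V$-module structure on $\cN$.

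Next, I take $\cH^0$ of both sides. On the right, since $\FM_{\bighat{B}}(\cN)$ is concentrated in cohomological degrees $[-\dim B, 0]$ by \autoref{properties_Fourier_Mukai_transform}.\autoref{itm:support}, and $L\pi^*$ lives in non-positive cohomological degrees, the hypercohomology spectral sequence has only the term $(p,q)=(0,0)$ contributing to total degree $0$, yielding $\pi^*\cH^0\FM_{\bighat{B}}(\cN)$. On the left, the analogous spectral sequence
\[
E_2^{p,q} = \cH^p\FM_{\bighat{A}}(R^q\bighat{\pi}_*\cN) \;\Rightarrow\; \cH^{p+q}\FM_{\bighat{A}}(R\bighat{\pi}_*\cN)
\]
has $E_2^{0,0} = \cH^0\FM_{\bighat{A}}(\bighat{\pi}_*\cN)$, which is the object I ultimately wish to produce.

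The main obstacle will be to verify that the natural edge map $\cH^0\FM_{\bighat{A}}(\bighat{\pi}_*\cN)\to\cH^0\FM_{\bighat{A}}(R\bighat{\pi}_*\cN)$ is an isomorphism, equivalently that the potential contributions $E_2^{-q,q}=\cH^{-q}\FM_{\bighat{A}}(R^q\bighat{\pi}_*\cN)$ for $q\geq 1$ are killed in $E_\infty$. I plan to handle this by factoring $\pi$ as $A\twoheadrightarrow A/\ker(\pi)\hookrightarrow B$, whose dual $\bighat{B}\twoheadrightarrow\widehat{A/\ker(\pi)}\hookrightarrow\bighat{A}$ decomposes $\bighat{\pi}$ into a surjection followed by a closed immersion. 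The closed immersion piece is finite and thus has no higher direct images, reducing the verification to the surjective piece, which one can analyze by a direct cohomological comparison with the spectral sequence for $L\pi^*$ (whose abutment already gives the clean answer $\pi^*\cH^0\FM_{\bighat{B}}(\cN)$ as noted above). Combining this with the base change identification yields the required isomorphism of underlying $\cO_A$-modules, and the compatibility with the Cartier module structures is automatic by the naturality of each step in $\cN$.
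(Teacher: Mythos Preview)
Your approach to the underlying sheaf isomorphism is on the right track but more laborious than needed. Once you have the derived isomorphism $L\pi^*\FM_{\bighat{B}}(\cN)\cong\FM_{\bighat{A}}(R\bighat{\pi}_*\cN)$, the passage to $\cH^0$ follows immediately from \autoref{properties_Fourier_Mukai_transform}.\autoref{itm:support} alone: since $\FM$ sends a sheaf to something in degrees $[-g,0]$ and satisfies $\FM(X[-q])=\FM(X)[q]$, the object $\FM_{\bighat{A}}(\tau_{\geq 1}R\bighat{\pi}_*\cN)$ lives entirely in degrees $\leq -1$. The truncation triangle then gives $\cH^0\FM_{\bighat{A}}(R\bighat{\pi}_*\cN)\cong\cH^0\FM_{\bighat{A}}(\bighat{\pi}_*\cN)$ directly; no factorization of $\pi$ is required. (Note also that your spectral sequence is not quite well-posed: $\FM$ is a contravariant equivalence, not the derived functor of a left-exact functor, so the standard hypercohomology indexing does not apply verbatim.)

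The real gap is in your final sentence. The Cartier structure on $\pi^*(\cH^0\FM_{\bighat{B}}(\cN))$ is defined using an identification $F^{\flat}\cong F^*$ on $A$ (see \autoref{rem:upper_shriek_and_pullback_are_equal_for_isogeny}), while the $V$-structure on $\bighat{\pi}_*\cN$ is defined using an identification $V^{\flat}\cong V^*$ on $\bighat{A}$ (see \autoref{restriction_of_a_V_module_to_subschemes}). Naturality of the Fourier--Mukai base change in $\cN$ does \emph{not} tell you these two choices interact correctly: applying naturality to the structural map $\theta\colon\cN\to V^{s,*}\cN$ produces a square involving $L\pi^*F^s_{B,*}$ and $R\bighat{\pi}_*V_{\bighat{B}}^{s,*}$, neither of which is literally $F^s_{A,*}L\pi^*$ or $V_{\bighat{A}}^{s,*}R\bighat{\pi}_*$. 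The paper handles this by first invoking \autoref{natural_transfo_pullback} and \autoref{multiplication_by_cst_does_not_change_Cartier_or_V} to show the answer is independent of the choices, and then making a \emph{specific} compatible choice (transporting the isomorphism $V^{\flat}\cong V^*$ through $\FM$ to obtain $F^{\flat}\cong F^*$) so that the adjoint structural maps match on the nose. You need an argument of this kind; ``automatic by naturality'' does not suffice.
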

\begin{proof}
	By \autoref{natural_transfo_pullback} and \autoref{multiplication_by_cst_does_not_change_Cartier_or_V}, we see that it is enough to show this result by fixing a specific isomorphism $F^{\flat} \cong F^*$ on $A$ and $B$. Let us choose one that makes our problem easier.
	
	We first fix an isomorphism $V^{\flat} \cong V^*$ on $\bighat{A}$, so that this gives $V^*$ the structure of a right adjoint of $V_*$. Applying $\FM_{\bighat{A}}$ and \autoref{properties_Fourier_Mukai_transform}.\autoref{itm:behaviour_pushforwards_and_pullbacks}, this gives $F_*$ the structure of a left adjoint of $F^*$, and therefore an isomorphism $F^{\flat} \cong F^*$ on $A$. The benefit of this construction is the following: if $\cN' \to V^{s, *}\cN'$ denotes the structural morphism of a $V$--module $\cN'$ and we apply $\FM_{\bighat{A}}$ to obtain a morphism $F^s_*\FM_{\bighat{A}}(\cN') \to \FM_{\bighat{A}}(\cN')$, then the associated morphism $\FM_{\bighat{A}}(\cN') \to F^{s, *}\FM_{\bighat{A}}(\cN')$ is \emph{precisely} the image by $\FM_{\bighat{A}}$ of the associated map $V^s_*\cN' \to \cN'$ (under the isomorphism from \autoref{properties_Fourier_Mukai_transform}.\autoref{itm:behaviour_pushforwards_and_pullbacks}). We do the same procedure on $B$ and $\bighat{B}$.
	
	It now follows from our construction above, together with naturality and functoriality of the isomorphism from \emph{loc. cit.}, that the diagram \[ \begin{tikzcd}
		L\pi^*\FM_{\bighat{B}}(\cN) \arrow[d] \arrow[rr, "\cong"] &  & \FM_A(R\bighat{\pi}_*\cN) \arrow[d] \\
		{F^{s, *}L\pi^*\FM_{\bighat{B}}(\cN)} \arrow[rr, "F^{s, *}(\cong)"] &  & {F^{s, *}\FM_A(R\bighat{\pi}_*\cN)}
	\end{tikzcd} \] commutes. Applying $\cH^0$ and \autoref{properties_Fourier_Mukai_transform}.\autoref{itm:support} concludes the proof.
\end{proof}

\begin{lem}\label{proj_formula_Cartier}
	Let $\pi \colon A \to B$ be a fibration of relative dimension $j$ between abelian varieties, and let $\cM$ be a Cartier module on $B$. Then there is an isomorphism of Cartier modules $R^j\pi_*(\pi^*\cM) \cong \cM$.
\end{lem}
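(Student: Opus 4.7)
The strategy is to realize the desired isomorphism as the Grothendieck trace, and exploit the fact that the relative dualizing sheaf of $\pi$ is trivial. A fibration between abelian varieties is automatically smooth of relative dimension $j$; since $\omega_A \cong \cO_A$ and $\omega_B \cong \cO_B$, the relative dualizing sheaf satisfies $\omega_{A/B} \cong \pi^*\omega_B^{-1} \otimes \omega_A \cong \cO_A$. Consequently $\pi^!\cM \cong \pi^*\cM \otimes \omega_{A/B}[j] \cong \pi^*\cM[j]$ for every coherent sheaf $\cM$ on $B$. The counit of the adjunction $R\pi_* \dashv \pi^!$ furnishes a trace map $R\pi_*\pi^!\cM \to \cM$ which, after shifting and taking $\cH^0$, yields a natural morphism
\[
	R^j\pi_*\pi^*\cM \;\longrightarrow\; \cM.
\]

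To verify that this is an isomorphism of underlying $\cO_B$-modules, I would invoke the projection formula $R\pi_*\pi^*\cM \cong R\pi_*\cO_A \otimes^L \cM$. Since $R\pi_*\cO_A$ is supported in cohomological degrees $[0,j]$, the spectral sequence for the derived tensor product identifies the top cohomology as $R^j\pi_*\pi^*\cM \cong R^j\pi_*\cO_A \otimes \cM$. Then, the key input $R^j\pi_*\cO_A \cong \cO_B$ follows from Grothendieck duality: the trace morphism $R^j\pi_*\omega_{A/B} \to \cO_B$ is an isomorphism for a smooth proper morphism, and $\omega_{A/B} \cong \cO_A$.

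For the compatibility with Cartier structures, both $R\pi_*$ and $\pi^!$ are functors on (complexes of) Cartier modules (as discussed in the preliminaries: $\pi_*$ takes Cartier modules to Cartier modules, while $\pi^!$ is the right adjoint/enhancement of $\pi^{\flat}$), and the trace is a natural transformation between such functors; it therefore automatically commutes with the structural morphisms, and taking $\cH^0$ preserves this. The only subtlety is that the identification $\pi^!\cM \cong \pi^*\cM[j]$ requires a trivialization $\omega_{A/B} \cong \cO_A$ compatible with the Cartier structures; this is possible because both are unit Cartier modules, and by \autoref{natural_transfo_pullback} together with \autoref{multiplication_by_cst_does_not_change_Cartier_or_V} the remaining scalar ambiguity does not affect the underlying Cartier module up to isomorphism. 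I expect the principal obstacle to be precisely this bookkeeping rather than any deeper geometric difficulty.
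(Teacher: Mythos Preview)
Your approach via the Grothendieck trace is a genuinely different route from the paper's. The paper does not use $\pi^!$ or the trace at all: it starts from the projection formula isomorphism $R^j\pi_*\pi^*\cM \cong \cM$ of underlying sheaves and then \emph{constructs} a compatible identification $F_B^{s,*} \cong F_B^{s,\flat}$ out of a fixed one on $A$, by pushing the counit $\lambda_A \colon F^s_{A,*}F_A^{s,*} \to \id$ through $R^j\pi_*$. The nontrivial step there is to check that the resulting natural transformation $F_B^{s,*} \to F_B^{s,\flat}$ is an isomorphism rather than zero, which the paper does by observing that $\lambda_A$ is surjective (this is $F$-purity of regular local rings) and that $R^j\pi_*$ preserves surjections. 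With these compatible choices in hand, Cartier equivariance of the projection formula map is a direct diagram chase.

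Your argument trades this construction for the duality counit $R\pi_*\pi^! \to \id$, which is Cartier equivariant automatically, and this is a clean idea. The cost is that you must show the identification $\pi^!\cM \cong \pi^*\cM[j]$ is Cartier equivariant \emph{naturally in $\cM$}, and your justification of this step is too compressed. The phrase ``both are unit Cartier modules'' for $\omega_{A/B}$ and $\cO_A$ needs to be unpacked (presumably you mean $\omega_{A/B} = \pi^!\omega_B[-j]$ with its induced structure), and even granting that $\omega_{A/B} \cong \cO_A$ as Cartier modules, this only treats $\cM = \omega_B$; one still has to argue that the two Cartier structures on $\pi^*\cM$ (the one transported from $\pi^!$ and the one defined via the chosen $F^\flat \cong F^*$ on $A$ and $B$) differ by a scalar independent of $\cM$, so that \autoref{natural_transfo_pullback} and \autoref{multiplication_by_cst_does_not_change_Cartier_or_V} actually apply. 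That verification is exactly the content the paper's more hands-on construction supplies. So your strategy is sound, but the sentence you flag as ``the only subtlety'' is doing real work and would need a paragraph, not a clause.
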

\begin{proof}
	Throughout this proof, we let $F_A$ (resp. $F_B$) denote the Frobenius endomorphism on $A$ (resp. $B$). Since $R^j\pi_*\cO_A \cong \cO_B$ (we fix such a choice of isomorphism), we obtain by the projection formula that for all coherent sheaves $\cP$ on $B$, there is an isomorphism $R^j\pi_*(\pi^*\cP) \cong \cP$. Our objective is to make this Cartier equivariant (whenever $\cP$ is a Cartier module). By \autoref{natural_transfo_pullback} and \autoref{multiplication_by_cst_does_not_change_Cartier_or_V}, we may choose whichever isomorphisms $F^{\flat} \cong F^*$ on $A$ and $B$ to prove the statement. 
	
	Fix an isomorphism $F_A^{\flat} \cong F_A^*$, giving us a natural transformation $\lambda_A \colon F^s_{A, *}F_A^{s, *} \to \id$ by adjunction. Given a coherent sheaf $\cP$ on $B$, we therefore have a map $F^s_{A, *}F^{s, *}_A(\pi^*\cP) \to \pi^*\cP$. Since $F^s_{A, *}F^{s, *}_A(\pi^*\cP) = F^s_{A, *}\pi^*F^{s, *}_B(\cP)$, we can therefore apply $R^j\pi_*$ and the projection formula to obtain a natural transformation $\lambda_B \colon F^s_{B, *}F^{s, *}_B(\cP) \to \cP$. By adjunction, this gives a natural transformation $F^{s, *}_B \to F^{s, \flat}_B$. 
	
	Let us show that it is an isomorphism of functors. Since $F^{s, \flat}_B \cong F^{s, *}_B$, we know by \autoref{natural_transfo_pullback} that this natural transformation is either an isomorphism or zero. We will show that it is never zero (except on the zero sheaf). By adjunction, this is equivalent to showing that for all coherent $\cP \neq 0$, the map $F^s_{B, *}F^{s, *}_B(\cP) \to \cP$ constructed before is non--zero. We will show that this map is actually surjective. Since $R^j\pi_*$ preserves surjections (all fibers have dimension $j$, so $R^{> j}\pi_* = 0$), it is enough to show by construction that the natural transformation $F^{s}_{A, *}F^{s, \flat}_A \to \id$ is surjective. This amounts to showing that for any coherent module $R$ on a regular local ring and $m \in M$, there exists $f \colon F^s_*R \to M$ such that $f(1) = m$. This is equivalent to $R$ being $F$--pure, so this natural transformation is indeed surjective. We have therefore proven that $F^{s, *}_B \to F^{s, \flat}_B$ is an isomorphism.
	
	Let us now prove the statement. Consider the map $\theta_{\cM} \colon \cM \to F_B^{s, *}\cM$ associated to $F^s_*\cM \to \cM$ via the isomorphism $F^{s, *}_B \to F^{s, \flat}_B$ constructed above. We then pullback by $\pi$ to obtain a map $\theta_{\pi^*\cM} \colon \pi^*\cM \to \pi^*F_B^{s, *}\cM \cong F_A^{s, *}(\pi^*\cM)$. We then obtain by definition of an adjunction that the Cartier module structure on $\pi^*\cM$ is given by the composition 
	\[ \begin{tikzcd}
		{F^s_{A, *}(\pi^*\cM)} \arrow[rr, "F^s_{A, *}(\theta_{\pi^*\cM})"] &  & {F^s_{A, *}F^{s, *}_A(\pi^*\cM)} \arrow[rr, "\lambda_A"] &  & \pi^*\cM.
	\end{tikzcd} \]
	Applying $R^j\pi_*$, we obtain a diagram 
	\[ \begin{tikzcd}
		{R^j\pi_*F^s_{A, *}(\pi^*\cM)} \arrow[rrr, "R^j\pi_*F^s_{A, *}(\theta_{\pi^*\cM})"] \arrow[d, "\cong"'] & & & {R^j\pi_*F^s_{A, *}F^{s, *}_A(\pi^*\cM)} \arrow[rr, "R^j\pi_*(\lambda_A)"] \arrow[d, "\cong"] &  & R^j\pi_*\pi^*\cM \arrow[d, "\cong"] \\
		{F^{s}_{B, *}\cM} \arrow[rrr, "F^s_{B, *}(\theta_{\cM})"]                                 & &  & {F^s_{B, *}F^{s, *}_B(\cM)} \arrow[rr, "\lambda_B"]                                 &  & \cM.                                
	\end{tikzcd} \] The left square commutes by naturality of the projection formula, while the right square commutes by definition of $\lambda_B$.
\end{proof}

\section{The main theorems}

Here we will show all our main results (except the very last part of \autoref{intro:thm_A} about $P_4$). Throughout this section, fix a normal proper variety $X$, and let $a \colon X \to A$ denote its Albanese morphism. We also set $d = \dim(X)$ and $g = \dim(A)$. Recall that we work over an algebraically closed field $k$.

\subsection{Separability}

Let us generalize \cite[Proposition 1.4]{Hacon_Patakfalvi_Zhang_Bir_char_of_AVs}.
\begin{lem}\label{separability}
	Let $\Delta \geq 0$ be a $\bZ_{(p)}$--divisor on $X$, and let $m > 0$ such that $m\Delta$ is integral. Assume that we are in either of the following situations:
	
	\begin{enumerate}
		\item $a$ is generically finite, $\Delta = 0$ and $P_p(X) \leq p - 1$;
		\item $m = 1$, $h^0_{ss}(X, \cO_X(K_X + \Delta)) \neq 0$ and $P_p(X, \Delta) \leq p - 1$;
		\item $\kappa(K_X + \Delta) = 0$ and there exists a Cartier structure on $\cO_X(m(K_X + \Delta))$ for which $a_*\cO_X(m(K_X + \Delta)) \not\sim_C 0$.
	\end{enumerate}
	
	Then $a \colon X \to A$ is separable.
\end{lem}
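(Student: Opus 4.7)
The proof goes by contradiction: assume $a\colon X\to A$ is not separable, so the function--field extension $K(A)\hookrightarrow K(X)$ has positive inseparable degree. The unifying structural step is to factor $a$ through an intermediate abelian variety. Take the separable closure $L$ of $K(A)$ in $K(X)$ and let $Y$ be the normalization of $A$ in $L$; then $X\to Y$ is purely inseparable and $Y\to A$ is finite separable. After an étale base change making $Y\to A$ étale, $Y$ becomes (a translate of) an abelian variety $A'$, and the universal property of the Albanese then yields a factorization $a=\pi\circ a'$ with $\pi\colon A'\to A$ a purely inseparable isogeny of some degree $p^r$, $r\geq 1$. In particular there is $y\in K(X)\setminus K(A')$ with $y^p\in K(A')$.

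Case $(2)$ is the main case. Fix a non--zero stable section $s\in H^0_{ss,\theta_t}(X,\cO_X(K_X+\Delta))$, where $\theta_t$ is the Cartier structure from \autoref{Cartier_structure_on_pluricanonical_bundles} (we may take $t=s$ since $m=1$). The idea is that the inseparability of $a$ yields locally on $X$ a variable $y$ with $y^p\in a'^*\cO_{A'}$ but $y\notin a'^*\cO_{A'}$; the $p$ rational sections $s^p,\,y\cdot s^p,\ldots,y^{p-1}\cdot s^p$ of $\cO_X(p(K_X+\Delta))$ can be turned into $k$--linearly independent global sections by clearing denominators through the iterated trace $\theta_t^e$ for $e$ large enough (using that $s$ is Cartier--stable and $y^p\in K(A')\subset K(X)$). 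Their linear independence follows from the linear independence of $1,y,\ldots,y^{p-1}$ over $K(A')$. This yields $P_p(X,\Delta)\geq p$, contradicting $P_p(X,\Delta)\leq p-1$.

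Case $(1)$ ($\Delta=0$, $a$ generically finite) reduces to Case $(2)$: Grothendieck duality gives $a_*\omega_X\cong\sHom(a_*\cO_X,\omega_A)\cong(a_*\cO_X)^{\vee}$, and the natural inclusion $\cO_A\hookrightarrow a_*\cO_X$ dualizes to a non--zero trace $a_*\omega_X\to\omega_A$, which by Cartier--equivariance (functoriality of $a_*$ on Cartier modules) yields a non--zero stable section of $\omega_X$. Case $(3)$ is handled via \autoref{basic_props_var_kod_zero}: the existence of a Cartier structure $\theta$ with $a_*\cO_X(m(K_X+\Delta))\not\sim_C 0$ forces $\kappa_S(X,\Delta;m(K_X+\Delta))\neq-\infty$, so together with $\kappa(K_X+\Delta)=0$ and \autoref{K_S(X)_not_-infty_implies_K_S(X)_equals_K(X)} we get $H^0_{ss,\theta}(X,\cO_X(m(K_X+\Delta)))=H^0(X,\cO_X(m(K_X+\Delta)))\neq 0$. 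Replicating the Case $(2)$ construction at each multiple then produces sections of $\cO_X(mk(K_X+\Delta))$ growing at rate $\Omega(p^{ck})$ for some $c>0$, contradicting $\kappa(K_X+\Delta)=0$.

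The hardest step is Case $(2)$, specifically the descent of the rational sections $y^is^p$ to $k$--linearly independent \emph{global} sections of $\cO_X(p(K_X+\Delta))$. The delicate point is that, using the explicit formula $\theta_t=\Tr^s_\Delta\circ F^s_*(t^{(m-\delta)(p^s-1)/r})$, the Cartier trace must both hit all the candidate sections $y^is^p$ and preserve their $k$--linear independence after pushforward to $A'$. Making this precise requires a careful local analysis at the inseparable locus of $a$, invoking the normality of $X$ and the interaction between $\theta_t$ and the purely inseparable factor $\pi$.
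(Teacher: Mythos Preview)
Your proposal has several genuine gaps, and the overall strategy diverges from what actually works.

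\textbf{The structural step is not valid.} Your ``unifying'' factorization through a purely inseparable isogeny $\pi\colon A'\to A$ is not justified. Taking the separable closure of $K(A)$ in $K(X)$ only makes sense when $a$ is generically finite, which is assumed only in Case~(1); in Cases~(2) and~(3) the extension $K(A)\subset K(X)$ may be transcendental. Even in Case~(1), the normalization $Y$ of $A$ in the separable closure is just some normal variety finite over $A$; it has no reason to become an abelian variety after an \'etale base change, and no such $A'$ arises. The correct structural input (from \cite{Hacon_Patakfalvi_Zhang_Bir_char_of_AVs} and \cite{Zhang_Abundance_for_non_uniruled_3_folds_with_non_trivial_albanese_in_pos_char}) is different: the kernel of the differential $da\colon T_X\to a^*T_A$ defines a height--one foliation, whose quotient $f\colon X\to Y$ satisfies the canonical bundle formula $pK_X\sim f^*K_Y+(p-1)E$ with $h^0(X,\cO_X(E))\geq 2$. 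This $Y$ is not an abelian variety, and the key divisor $E$ (which supplies the $p$ sections) is a geometric object, not a rational function pulled out of a field extension.

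\textbf{The heart of Case~(2) is missing.} You yourself flag that turning the rational sections $y^is^p$ into honest global sections of $\cO_X(p(K_X+\Delta))$ is ``the hardest step'' and requires a ``careful local analysis'' that you do not supply. In fact the sketch does not work: the iterated trace $\theta_t^e$ lands in $H^0(X,\cO_X(K_X+\Delta))$, not in $H^0(X,\cO_X(p(K_X+\Delta)))$, so it cannot ``clear denominators'' of $y^is^p$ in the way you suggest. The paper instead uses the foliation quotient: since $f$ has height one there is $g\colon Y\to X$ with $g\circ f=F$, and the stable section of $K_X+\Delta$ transports via \autoref{universal_homeo_induce_equivalence_of_Cartier_crystals} to show $K_Y+g^*\Delta\geq 0$; then $p(K_X+\Delta)\sim f^*(K_Y+g^*\Delta)+(p-1)E$ gives $P_p(X,\Delta)\geq h^0(\cO_X((p-1)E))\geq p$ directly.

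\textbf{Cases~(1) and~(3) do not reduce as claimed.} In Case~(1), the trace $a_*\omega_X\to\omega_A$ produces a section of $\omega_A$, not of $\omega_X$, so it does not yield $h^0_{ss}(X,\omega_X)\neq 0$. The paper instead observes that the foliation quotient $Y\to A$ is still generically finite, so $h^0(Y,\omega_Y)\geq 1$ by \cite[Theorem~4.3]{Baudin_Positive_characteristic_generic_vanishing_theory}, and then the canonical bundle formula finishes. In Case~(3), \autoref{basic_props_var_kod_zero} applies only to the specific structures $\theta_t$ of \autoref{Cartier_structure_on_pluricanonical_bundles}, not to an arbitrary Cartier structure as in the hypothesis; the paper instead pushes the given Cartier structure down to $Y$ and uses generic vanishing (\autoref{generic vanishing}.\autoref{itm:gv_p_closed}) to find a torsion point in $V^0_{\injj}$, forcing some $n(K_Y+g^*D)\geq 0$, which combined with $E$ contradicts $\kappa(K_X+\Delta)=0$.
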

\begin{proof}
	Assume by contradiction that $a$ is not separable. Going through the exact same lines as in the proof of \cite[Proposition 1.4]{Hacon_Patakfalvi_Zhang_Bir_char_of_AVs}, we obtain the existence of a finite purely inseparable morphism $f \colon X \to Y$ over $A$ of height one such that 
	\begin{equation}\label{equation_insep}
		pK_X \sim f^*K_Y + (p - 1)E
	\end{equation}  for some divisor $E$ on $X$ such that $h^0(X, \cO_X(E)) \geq 2$ (see \cite[Lemma 4.2]{Zhang_Abundance_for_non_uniruled_3_folds_with_non_trivial_albanese_in_pos_char}). In particular, $h^0(X, \cO_X(pE)) \geq p$. Let us now finish the proof in each case.
	\begin{enumerate}
		\item In this case, $Y \to A$ is also generically finite, meaning that $h^0(Y, \omega_Y) \geq 1$ by \cite[Theorem 4.3]{Baudin_Positive_characteristic_generic_vanishing_theory}. Thanks to \autoref{equation_insep}, we obtain a contradiction with the assumption that $P_p(X) \leq p - 1$.
		\item Since $f$ has height one, there exists a commutative diagram \[ \begin{tikzcd}
			X \arrow[r, "f"] \arrow[rr, "F"', bend right] & Y \arrow[r, "g"] & X \arrow[r, "f"] & Y.
		\end{tikzcd} \]
		Given that $h^0_{ss}(X, \cO_X(K_X + \Delta)) \neq 0$ and that $g^{\flat}\cO_X(K_X + \Delta) \cong \cO_Y(K_Y + g^*\Delta)$ (this holds on the regular locus, and one readily verifies that $g^{\flat}$ preserves reflexivity), we obtain by \autoref{universal_homeo_induce_equivalence_of_Cartier_crystals} that $h^0_{ss}(Y, \cO_Y(K_Y + g^*\Delta)) \neq 0$, so in particular $K_Y + g^*\Delta \geq 0$. Since \[ p(K_X + \Delta) \sim f^*(K_Y + g^*\Delta) + (p - 1)E  \] by \autoref{equation_insep}, we again obtain a contradiction since $P_p(X, \Delta) \leq p - 1$.
		\item We use the same notations as in the previous point. Write $m(K_X + \Delta) = K_X + D$. Then again by \autoref{universal_homeo_induce_equivalence_of_Cartier_crystals}, there exists a Cartier module structure on $\cO_Y(K_Y + g^*D)$ such that $a_*g_*\cO_Y(K_Y + g^*D) \not\sim_C 0$. By \autoref{generic vanishing}.\autoref{itm:gv_p_closed} and \autoref{exists_V^i_with_codim_i}, there exists a (prime--to--$p$) torsion point in $V^0_{\injj}(a_*g_*\cO_Y(K_Y + g^*D))$, so in particular there exists $n > 0$ such that $n(K_Y + g^*D)$ is effective. We obtain from \autoref{equation_insep} that \[ nmp(K_X + \Delta) \sim f^*(n(K_Y + g^*D)) + n(p - 1)E, \] contradicting that $\kappa(K_X + \Delta) = 0$. \qedhere
	\end{enumerate}
\end{proof}

\subsection{A positive characteristic analogue of Pareschi's argument}

Pareschi proved in \cite{Pareschi_Basic_results_on_irr_vars_via_FM_methods} that if $X$ is a smooth projective complex variety such that $V^0(a_*\omega_X)$ has positive dimension, then there exists a positive--dimensional closed subset $Z \inc V^0(a_*\omega_X)$ such that also $-Z \inc V^0(a_*\omega_X)$ (see \cite{Chen_Jiang_Positivity_in_varieties_of_maximal_Albanese_dimension} for an improvement). Our objective is to present a positive characteristic version of this statement and argument:

\begin{prop}\label{Pareschi_char_p}
	Assume that $a \colon X \to A$ is finite and that $A$ has no supersingular factor. If $\dim(V^0_{\injj}(a_*\tau(\omega_X))) > 0$, then there exists a positive--dimensional closed subset $Z \inc V^0_{\injj}(a_*\tau(\omega_X))$ such that also $-Z \inc V^0_{\injj}(a_*\tau(\omega_X))$.
\end{prop}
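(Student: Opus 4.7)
The plan is to follow Pareschi's approach from \cite[Lemma~4.2]{Pareschi_Basic_results_on_irr_vars_via_FM_methods}, replacing his use of Kollár's decomposition theorem and Grauert--Riemenschneider (both of which are known to fail in positive characteristic even up to nilpotence by \cite{Baudin_Bernasconi_Kawakami_Frobenius_GR_fails}) with Cartier-crystal substitutes built from the categorification of \cite{Baudin_Positive_characteristic_generic_vanishing_theory}. Throughout, I will use the shorthand $\cM \coloneqq a_*\tau(\omega_X)$, viewed as a Cartier crystal on $A$.

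First, I would extract from the hypothesis a positive-dimensional irreducible component $W$ of $V^0_{\injj}(\cM)$. By \autoref{generic vanishing}.\autoref{itm:gv_p_closed} combined with the no-supersingular-factor hypothesis on $A$, one may write $W = \beta + \widehat{B}$ with $\widehat{B} \inc \widehat{A}$ an abelian subvariety of positive dimension and $\beta$ a prime-to-$p$ torsion point. Dualising the inclusion $\widehat{B} \inj \widehat{A}$ gives a quotient $\pi \colon A \surj C$ with $\widehat{C} = \widehat{B}$. Tensoring $\cM$ by $\beta^{-1}$ shifts $W$ to $\widehat{B}$ (using \autoref{properties_Fourier_Mukai_transform}.\autoref{itm:behaviour_under_translation}), and \autoref{annyoing_compatibilities} then converts the containment $\widehat{B} \inc V^0_{\injj}(\cM \otimes \beta^{-1})$ into non-triviality of the Cartier crystal $\pi_*(\cM \otimes \beta^{-1})$ on $C$.

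Next I would exploit Grothendieck--Serre duality on $X$ (in the Cartier-crystal formulation of \cite[Corollary~5.1.7]{Baudin_Duality_between_perverse_sheaves_and_Cartier_crystals}) to relate the twist by $\beta^{-1}$ to that by $\beta$. The dual of $\omega_X$ is (up to shift) $\cO_X$, so dualising $R\pi_*(\cM \otimes \beta^{-1})$ on $C$ should yield, up to shift and the crystal duality functor, a complex whose cohomology is computed by the twist $R\pi_*(\cM \otimes \beta)$. Combined with the $V$-crystal equivalence of \autoref{equivalence_V_crystals_and_Cartier_crystals}, the projection formula of \autoref{proj_formula_Cartier}, and the codimension bound of \autoref{generic vanishing}.\autoref{itm:gv_codim} (which forces the $V^i$-loci for $i \geq 1$ to be proper in $\widehat{C}$), this should upgrade non-triviality of $\pi_*(\cM \otimes \beta^{-1})$ to non-triviality of $\pi_*(\cM \otimes \beta)$, and hence give the inclusion $-\beta + \widehat{B} \inc V^0_{\injj}(\cM)$. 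Setting $Z \coloneqq -\beta + \widehat{B}$ and using that $\widehat{B} = -\widehat{B}$ as a subgroup, one obtains $-Z = \beta + \widehat{B} = W \inc V^0_{\injj}(\cM)$, as required.

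The main obstacle I anticipate is step three: the naive transfer of Serre duality to the ``dual'' twist by $\beta^{\pm 1}$ hides a real subtlety, since in characteristic $p$ there is no direct Kollár or Chen--Jiang decomposition of $a_*\omega_X$ to fall back on. Rather than trying to split $a_*\omega_X$, the plan is to carry out the argument entirely in $\Crys^{C^s}_C$, leveraging the adjunction $\pi_* \dashv \pi^{\flat}$ for Cartier crystals, the projection formula of \autoref{proj_formula_Cartier}, and the duality between Cartier and Frobenius crystals to implement the required $\beta \leftrightarrow \beta^{-1}$ symmetry through a single duality computation on $X$ composed with pushforward to $C$.
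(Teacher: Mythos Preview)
Your outline follows Pareschi's template correctly, but the crucial third step contains a genuine gap. Serre duality on $X$ sends $\tau(\omega_X)\otimes a^*\beta^{-1}$ to (a Frobenius crystal built from) $\cO_X\otimes a^*\beta$, not to $\tau(\omega_X)\otimes a^*\beta$. So the ``single duality computation on $X$'' you propose lands you in the wrong category: you obtain information about $R\pi_*(a_*\cO_X\otimes\beta)$ as a Frobenius crystal, not about $R\pi_*(a_*\tau(\omega_X)\otimes\beta)$ as a Cartier crystal, and there is no general mechanism to convert one into the other. The adjunction $\pi_*\dashv\pi^{\flat}$ and \autoref{proj_formula_Cartier} do not bridge this; neither does the codimension bound of \autoref{generic vanishing}.\autoref{itm:gv_codim}.

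The paper's fix is geometric and is precisely the step you are missing. One first proves (\autoref{fibered_ab_vars}) that $a(X)$ is \emph{fibered by the kernel of $\pi$}, using simplicity of $\tau(\omega_X)$ as a Cartier crystal. This has two consequences. First, the correct object to study is $R^j\pi_*$ with $j=\dim\ker\pi$ (not $\pi_*$): \autoref{higher_pushforward_supported_everywhere} shows $R^j\pi_*(a_*\tau(\omega_X)\otimes\beta)$ is a non-nilpotent Cartier crystal with $V^0_{\injj}=\widehat{B}$. Second, and this is the heart of the matter, the generic fiber $X_\eta$ of $X\to\pi(a(X))$ has dimension exactly $j$, so one can apply Serre duality \emph{there}: non-nilpotence of $H^j(X_\eta,\omega_{X_\eta}\otimes\beta|_{X_\eta})$ gives non-nilpotence of the Frobenius module $H^0(X_\eta,\beta^{-1}|_{X_\eta})$, hence a Frobenius-fixed non-zero section. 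Because $\beta|_{X_\eta}\in\Pic^0(X_\eta)$, such a section is an \emph{isomorphism} $\cO_{X_\eta}\cong\beta^{-1}|_{X_\eta}$ of Frobenius modules. This triviality on the generic fiber is what lets one swap $\beta$ and $\beta^{-1}$ and run the argument backwards (\autoref{twist_by_beta_inverse_also_good}); global Serre duality on $X$ alone cannot produce this. Finally, the passage from $R^j\pi_*(a_*\tau(\omega_X)\otimes\beta^{-1})\not\sim_C 0$ back to a positive-dimensional subset of $V^0_{\injj}(a_*\tau(\omega_X))$ is not automatic either: it needs a Leray spectral sequence argument controlling the lower $R^i\pi_*$ via \autoref{generic vanishing}.\autoref{itm:gv_codim} (\autoref{final_bit_of_the_proof_of_Pareschi}), and yields only $-\beta+p^{es}(Z')$ for some component $Z'$, not all of $-\beta+\widehat{B}$.
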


\begin{rem}\label{remark_test_module}
	\begin{enumerate}
		\item In \autoref{more_general_Pareschi}, we will generalize the above statement.
		\item\label{rem_test} We recall that $\tau(\omega_X) \inc \omega_X$ denotes the test submodule of $\omega_X$, namely the minimal non--zero sub--Cartier module of $\omega_X$ (see \cite[End of Section 4]{Smith_Test_ideals_in_local_rings}). A direct way to see its (global) existence is by using that Cartier crystals are Artinian (\cite{Blickle_Bockle_Cartier_modules_finiteness_results}) and \cite[Lemma 13.1]{Gabber_notes_on_some_t_structures} to go from crystals to modules. Note that whenever $X$ is regular, we have that $\tau(\omega_X) = \omega_X$. In particular, $\tau(\omega_X)|_{X_{\reg}} = \omega_{X_{\reg}}$.
	\end{enumerate}
	
\end{rem}

The proof will be split into several parts. Throughout, we assume that $X$ satisfies the hypotheses of \autoref{Pareschi_char_p}. If $V^0_{\injj}(a_*\tau(\omega_X)) = \bighat{A}$, then there is nothing to show, so we may and will assume that $V^0_{\injj}(a_*\tau(\omega_X)) \neq \bighat{A}$. We also let $\cN$ denote the injective $V$--module associated to $a_*\tau(\omega_X)$.

\begin{lem}\label{fibered_ab_vars}
	The image of $a \colon X \to A$ is fibered by abelian subvarieties.
\end{lem}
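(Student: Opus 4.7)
I want to produce a positive-dimensional abelian subvariety $B \subseteq A$ with $a(X) + B = a(X)$, which is exactly the statement that $a(X)$ is fibered by translates of $B$. The strategy follows Pareschi's philosophy: extract a positive-dimensional abelian subvariety from a component of $V^0_{\injj}(a_*\tau(\omega_X))$, dualize it to $A$, and then argue via the Fourier--Mukai machinery that the dual subvariety stabilizes the image.

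\emph{Producing the abelian subvariety.} Since $V^0_{\injj}(a_*\tau(\omega_X))$ has positive dimension but is not all of $\bighat{A}$, the injective $V$-crystal $\cN$ has an associated point $y$ whose closure $-\overline{\{y\}}$ is an irreducible component $W$ of $V^0_{\injj}$ of codimension $c$ with $0 < c < g$. By \autoref{exists_V^i_with_codim_i} we have $-y \in V^c_{\injj}(a_*\tau(\omega_X))$; combined with the codimension bound \autoref{generic vanishing}.\autoref{itm:gv_codim}, the component of $V^c_{\injj}$ through $-y$ has codimension exactly $c$ and, since it must contain the whole closure $W$, actually equals $W$. Applying \autoref{generic vanishing}.\autoref{itm:gv_p_closed} together with the no-supersingular-factor hypothesis on $A$, $W$ is a translate $\hat B + \alpha$ of a positive-dimensional abelian subvariety $\hat B \subseteq \bighat{A}$ by a prime-to-$p$ torsion point $\alpha$. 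Let $\pi \colon A \to A/B$ be the quotient whose dual is the inclusion $\widehat{A/B} = \hat B \hookrightarrow \bighat{A}$; then $B \subseteq A$ is a positive-dimensional abelian subvariety.

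\emph{Reduction to $\alpha = 0$ and computing the pushforward.} Twisting by a suitable power of $\alpha$ (a unit Frobenius module by \autoref{pullback_of_Cartier_modules}.\autoref{itm:tensor_product_of_Cartier_module_and_unit_F_module}, since $\alpha$ is prime-to-$p$ torsion, so the tensor product inherits a Cartier-module structure), I may pass to a Cartier module $\cM$ with $\hat B \subseteq V^0_{\injj}(\cM)$ via \autoref{properties_Fourier_Mukai_transform}.\autoref{itm:behaviour_under_translation}. Then \autoref{annyoing_compatibilities} together with \autoref{properties_Fourier_Mukai_transform}.\autoref{itm:behaviour_pushforwards_and_pullbacks} computes the Fourier--Mukai transform of $\pi_*\cM$ on $A/B$ as $\bighat{\pi}^*\cH^0\FM_A(\cM)$, whose support on $\widehat{A/B}$ is $\bighat{\pi}^{-1}(\Supp\cH^0\FM_A(\cM)) \supseteq \bighat{\pi}^{-1}(\hat B) = \widehat{A/B}$. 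Hence $V^0_{\injj}(\pi_*\cM) = \widehat{A/B}$, and \autoref{generic vanishing}.\autoref{itm:gv_link_with_section} gives $H^0(A/B, \pi_*\cM \otimes \beta) \neq 0$ for every $\beta \in \widehat{A/B}$.

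\emph{Main obstacle: conversion into fiberedness.} It remains to pass from this full non-vanishing to the geometric statement $a(X) + B = a(X)$. I would argue by contradiction: if $a(X) + B \neq a(X)$, then the composition $\pi \circ a \colon X \to A/B$ factors through the proper image $Y \coloneqq \pi(a(X)) \subsetneq A/B$, and the Cartier crystal $\pi_*\cM$ is supported on $Y$. An appropriate rank computation via \autoref{rank_of_pushforward}, combined with the observation that a coherent sheaf supported on a proper subvariety of an abelian variety cannot have cohomology support locus equal to the whole dual, will contradict the non-vanishing established above. This last step is where the characteristic-zero proof would invoke the Kollár decomposition theorem; in positive characteristic we instead remain entirely within the $V$-crystal / Fourier--Mukai formalism, which is the most delicate part of the argument.
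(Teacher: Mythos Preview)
Your final step (``Main obstacle'') contains a genuine gap, and the argument as sketched cannot be completed along those lines.

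First, the implication ``$a(X) + B \neq a(X)$ implies $Y = \pi(a(X)) \subsetneq A/B$'' is false. Being fibered by $B$ means $a(X) = \pi^{-1}(\pi(a(X)))$; its failure is entirely compatible with $\pi|_{a(X)}$ being surjective (e.g.\ the diagonal in $E \times E$ with $B = E \times \{0\}$). Second, even if $Y$ were proper, the assertion that a Cartier module supported on a proper subvariety cannot have $V^0_{\injj}$ equal to the whole dual is false in general: think of $\omega_D$ for $D$ an ample divisor on an abelian variety. So neither half of your proposed contradiction holds.

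The ingredient you are missing is the \emph{simplicity of $\tau(\omega_X)$ as a Cartier crystal}. The paper's argument runs in the opposite direction from yours: rather than pushing forward along $\pi$, it works on the $\bighat{A}$--side via the counit $i_{\bighat{B},*}i_{\bighat{B}}^{\flat}\cN' \hookrightarrow \cN'$ (here $\cN' = T_{-\beta}^*\cN$). Under the equivalence $\Crys_A^{C^s} \simeq (\Crys_{\bighat{A}}^{V^s})^{op}$ this becomes a \emph{surjection} of Cartier crystals
\[
a_*\tau(\omega_X) \otimes \beta \dashrightarrow \pi^*\cH^0\FM_{\bighat{B}}(i_{\bighat{B}}^{\flat}\cN'),
\]
so the right--hand side has support of the form $\pi^{-1}(\,\cdot\,)$, automatically fibered by $\ker\pi$. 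The crucial point is then to show this support equals all of $a(X)$: by adjunction one gets a nonzero morphism $\tau(\omega_X) \dashrightarrow a^{\flat}(\,\cdot\,)$, and since $\tau(\omega_X)$ is a simple crystal this morphism is injective, forcing the support of the target to be all of $X$. Your proposal never invokes simplicity, and without it there is no mechanism to control supports.

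A smaller issue: in your second paragraph you assert $\cH^0\FM_{A/B}(\pi_*\cM) = \bighat{\pi}^*\cH^0\FM_A(\cM)$, but \autoref{properties_Fourier_Mukai_transform}.\autoref{itm:behaviour_pushforwards_and_pullbacks} is a derived statement relating $R\pi_*$ and $L\bighat{\pi}^*$, and taking $\cH^0$ does not commute with these on the nose; you would need the nilpotence statements in \autoref{equivalence_V_crystals_and_Cartier_crystals} to justify this at the level of crystals.
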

\begin{proof}
	By assumption and \autoref{generic vanishing}.\autoref{itm:gv_p_closed}, there exists an integer $s > 0$, a line bundle $\beta \in \bighat{A}$ such that $(p^s - 1)(\beta) = 0$ and a strict abelian subvariety $\bighat{B} \inc \bighat{A}$ such that $\beta + \bighat{B}$ is a positive--dimensional irreducible component of $V^0_{\injj}(a_*\tau(\omega_X))$ (so $-\beta + \bighat{B}$ is a component of $\Supp(\cN)$). We may see $a_*\tau(\omega_X)$ as an $s$--Cartier module, and therefore consider the Cartier module $a_*\tau(\omega_X) \otimes \beta$ (see \autoref{pullback_of_Cartier_modules}.\autoref{itm:tensor_product_of_Cartier_module_and_unit_F_module}), with associated injective $V$--module $\cN' \coloneqq T_{-\beta}^*\cN$ (see \autoref{properties_Fourier_Mukai_transform}.\autoref{itm:behaviour_under_translation}). Let $\pi \colon A \surj B$ denote the fibration dual to $\bighat{B} \inc \bighat{A}$.
	
	%
	%
	
	
	By construction, there is an injection $i_{\bighat{B}, *}i_{\bighat{B}}^{\flat}(\cN') \inj \cN'$, so by \autoref{equivalence_V_crystals_and_Cartier_crystals} there is a non--zero surjection of Cartier crystals \[ a_*\tau(\omega_X) \otimes \beta \dashrightarrow \cH^0\FM_{\bighat{A}}(i_{\bighat{B}, *}i_{\bighat{B}}^{\flat}(\cN')) \expl{\cong}{\autoref{annyoing_compatibilities}} \pi^*\cH^0\FM_{\bighat{B}}(i_{\bighat{B}}^{\flat}\cN'). \]  Thus, \[ a(X) \supseteq \Supp_{\crys}(\pi^*\cH^0\FM_{\bighat{B}}(i_{\bighat{B}}^{\flat}\cN')) \expl{=}{\autoref{support_of_pullback}} \pi^{-1}(\Supp_{\crys}(\cH^0\FM_{\bighat{B}}(i_{\bighat{B}}^{\flat}\cN'))). \] To conclude the proof, it is then enough to show that $\Supp_{\crys}(\cH^0\FM_{\bighat{A}}(i_{\bighat{B}, *}i_{\bighat{B}}^{\flat}(\cN'))) = a(X)$. Since we have a non--zero morphism of crystals $a_*\tau(\omega_X) \otimes \beta \dashrightarrow \cH^0\FM_{\bighat{A}}(i_{\bighat{B}, *}i_{\bighat{B}}^{\flat}(\cN'))$, there is also a non--zero morphism of crystals $a_*\tau(\omega_X) \dashrightarrow \cH^0\FM_{\bighat{A}}(i_{\bighat{B}, *}i_{\bighat{B}}^{\flat}(\cN')) \otimes \beta^{-1}$. By adjunction (recall that $a$ is finite), we obtain a non--zero morphism of crystals \[ \tau(\omega_X) \dashrightarrow a^{\flat}(\cH^0\FM_{\bighat{A}}(i_{\bighat{B}, *}i_{\bighat{B}}^{\flat}(\cN')) \otimes \beta^{-1}). \] Since $\tau(\omega_X)$ is simple by definition, this morphism must be injective. In particular, we obtain that $\Supp_{\crys}(a^{\flat}(\cH^0\FM_{\bighat{A}}(i_{\bighat{B}, *}i_{\bighat{B}}^{\flat}(\cN')) \otimes \beta^{-1})) = X$, so $\Supp_{\crys}(\cH^0\FM_{\bighat{A}}(i_{\bighat{B}, *}i_{\bighat{B}}^{\flat}(\cN')) \otimes \beta^{-1}) = a(X)$. \qedhere
	
	%
	%
	%
	%
	%
\end{proof}

\begin{rem}
	Note that the proof above shows something more general. Namely it shows that if $X$ satisfies the assumptions of \autoref{Pareschi_char_p} and $\cN$ is not torsion--free, then $a(X)$ is fibered by abelian subvarieties (although we may have $V^0_{\injj}(\cN) = \bighat{A}$ in this case). 
\end{rem}

We will keep the notations of the proof of \autoref{fibered_ab_vars}. Let $j > 0$ denote the relative dimension of $\pi \colon A \surj B$.

\begin{lem}\label{higher_pushforward_supported_everywhere}
	The Cartier crystal $R^j\pi_*(a_*\tau(\omega_X) \otimes \beta)$ is supported everywhere on $\pi(a(X))$, and $V^0_{\injj}(R^j\pi_*(a_*\tau(\omega_X) \otimes \beta)) = \bighat{B}$. 
\end{lem}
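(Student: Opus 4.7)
Here is the plan I would follow.

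\smallskip

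\textit{Support statement.} The proof of \autoref{fibered_ab_vars} produced a non-zero crystal surjection $a_*\tau(\omega_X) \otimes \beta \dashrightarrow \pi^*\cM$. Since $\pi$ has relative dimension $j$, the functor $R^j\pi_*$ is right exact on coherent sheaves, hence sends crystal surjections to crystal surjections. Applying it and using the projection formula \autoref{proj_formula_Cartier}, which yields $R^j\pi_*(\pi^*\cM) \cong \cM$ as Cartier modules, gives a crystal surjection
\[ R^j\pi_*(a_*\tau(\omega_X) \otimes \beta) \dashrightarrow \cM. \]
Therefore $\Supp_{\crys}(R^j\pi_*(a_*\tau(\omega_X) \otimes \beta)) \supseteq \Supp_{\crys}(\cM) = \pi(a(X))$. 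The reverse inclusion is immediate since $R^j\pi_*$ of a sheaf supported on $a(X)$ is supported on $\pi(a(X))$.

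\smallskip

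\textit{Reduction of the $V^0_{\injj}$ statement.} Write $\cL \coloneqq a_*\tau(\omega_X) \otimes \beta$ and $\cN' \coloneqq \cH^0\FM_A(\cL) = T^*_{-\beta}\cN$. The inclusion $V^0_{\injj}(R^j\pi_*\cL) \subseteq \bighat{B}$ is automatic since $R^j\pi_*\cL$ lives on $B$. For the reverse inclusion, by \autoref{very_gen_pts_in_V0_implies_in_V0} applied to the Cartier module $R^j\pi_*\cL$ with $Z = \bighat{B}$ and $r = 0$, it is enough to show that $H^0_{ss}(B, R^j\pi_*\cL \otimes \alpha) \neq 0$ for very general $\alpha \in \bighat{B}$, since $p^{es}\bighat{B} = \bighat{B}$.

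The first key input is a Tor computation showing $\bighat{B} \subseteq V^j_{\injj}(\cL)$. By hypothesis, $\bighat{B}$ is an irreducible component of $V^0_{\injj}(\cL) = -\Supp(\cN'_{\injj})$ of codimension $j$. At the generic point $\eta_{\bighat{B}}$ of $\bighat{B}$ in $\bighat{A}$, no other component of $\Supp(\cN'_{\injj})$ passes through (otherwise $\bighat{B}$ would not be a component); hence $\cN'_{\injj,\eta_{\bighat{B}}}$ is a finite length module over the regular local ring $R = \cO_{\bighat{A},\eta_{\bighat{B}}}$ of dimension $j$. By Auslander--Buchsbaum it has projective dimension exactly $j$, so $\Tor_j^R(\cN'_{\injj,\eta_{\bighat{B}}}, K(\bighat{B})) \neq 0$. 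Upper semicontinuity of the Tor sheaves then forces $\Tor_j^{\cO_{\bighat{A}}}(\cN'_{\injj}, k(y)) \neq 0$ for all $y \in \bighat{B}$, giving $\bighat{B} \subseteq V^j_{\injj}(\cL)$. Since $V^j_{\injj}(\cL)$ has codimension $\geq j$ by \autoref{generic vanishing}.\autoref{itm:gv_codim}, $\bighat{B}$ is a codimension-$j$ irreducible component, i.e., $\bighat{B} \subseteq S^j(\cL)$.

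\smallskip

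\textit{From $H^j_{ss}(A,\cdot)$ to $H^0_{ss}(B,\cdot)$, and conclusion.} By \autoref{generic vanishing}.\autoref{van_and_non_van}, a very general $\alpha \in \bighat{B}$ lies in $W^j_F(\cL)$, so $H^j_{ss}(A, \cL \otimes \pi^*\alpha) \neq 0$, where we view $\alpha$ in $\bighat{A}$ via $\pi^* = \hat\pi$. To transfer this to $H^0_{ss}(B, R^j\pi_*\cL \otimes \alpha)$, I use the Leray spectral sequence
\[ E_2^{p,q} = H^p(B, R^q\pi_*\cL \otimes \alpha^{p^{es}}) \Longrightarrow H^{p+q}(A, \cL \otimes \pi^*\alpha^{p^{es}}), \]
combined with generic vanishing for the Cartier modules $R^q\pi_*\cL$ on $B$. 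Since $V^p_{\injj}(R^q\pi_*\cL)$ has codimension $\geq p$ in $\bighat{B}$, for very general $\alpha$ one has $\lim_e H^p(B, R^q\pi_*\cL \otimes \alpha^{p^{es}}) = 0$ whenever $p \geq 1$. After passing to the limit along the Cartier structure, all terms $E_\infty^{p,j-p}$ with $p \geq 1$ vanish, so the edge map yields an injection $H^j_{ss}(A, \cL \otimes \pi^*\alpha) \hookrightarrow H^0_{ss}(B, R^j\pi_*\cL \otimes \alpha)$. Combining with the non-vanishing above concludes the proof.

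\smallskip

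The main obstacle is the last step: justifying that taking the Frobenius-stable limit along the Leray spectral sequence kills the $E_\infty^{p,j-p}$ for $p \geq 1$ and only leaves the $(0,j)$ edge. This needs a careful interplay of limits with the filtration on $H^j$, using that $V^p_{\injj}(R^q\pi_*\cL)$ has codimension $\geq p$ so that very general $\alpha$ avoids the countable union of the preimages of these loci under the isogenies $p^{es}$.
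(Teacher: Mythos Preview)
Your proof of the support statement is the same as the paper's. For the equality $V^0_{\injj}(R^j\pi_*\cL) = \bighat{B}$, however, you take a genuinely different route.

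The paper argues directly through the Fourier--Mukai equivalence (\autoref{equivalence_V_crystals_and_Cartier_crystals}): the crystal surjection $R^j\pi_*\cL \dashrightarrow \cH^0\FM_{\bighat{B}}(i^{\flat}_{\bighat{B}}\cN')$ dualizes to an injection of $V$--crystals $i^{\flat}_{\bighat{B}}\cN' \dashrightarrow \cR$, which by \cite[Proposition 3.4.6]{Bockle_Pink_Cohomological_Theory_of_crystals_over_function_fields} can be realized as an honest injection of $V$--modules $i^{\flat}_{\bighat{B}}\cN' \hookrightarrow V^{es,*}\cR$. Since $\bighat{B}$ is a component of $\Supp(\cN')$, the module $i^{\flat}_{\bighat{B}}\cN'$ is supported on all of $\bighat{B}$, and hence so is $\cR$. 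This avoids cohomology computations and spectral sequences entirely.

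Your approach --- the Auslander--Buchsbaum/semicontinuity argument giving $\bighat{B} \subseteq S^j(\cL)$, then \autoref{generic vanishing}.\autoref{van_and_non_van}, then a Leray spectral sequence transfer, then \autoref{very_gen_pts_in_V0_implies_in_V0} --- is also correct, and in fact mirrors the strategy the paper uses later in \autoref{final_bit_of_the_proof_of_Pareschi}. The ``main obstacle'' you flag is not a genuine obstruction: inverse limits of finite--dimensional $k$--vector spaces are automatically Mittag--Leffler, so $\varprojlim$ is exact on such systems; hence $\varprojlim E_2^{p,j-p}(e) = 0$ forces $\varprojlim E_\infty^{p,j-p}(e) = 0$ and then $\varprojlim F^1(e) = 0$ by descending induction. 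One small extra cost of your route is that \autoref{very_gen_pts_in_V0_implies_in_V0} requires $k$ uncountable, so you would need a base change at the outset; the paper's categorical argument does not need this.
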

\begin{proof} 
	Recall that by the proof of \autoref{fibered_ab_vars}, we have a surjection of Cartier crystals \[ a_*\tau(\omega_X) \otimes \beta \dashrightarrow \pi^*\cH^0\FM_{\bighat{B}}(i_{\bighat{B}}^{\flat}\cN'). \] Since fibers of $\pi$ are all of dimension $j$, there is no $R^{j + 1}\pi_*$ and we therefore obtain a surjection \[ R^j\pi_*(a_*\tau(\omega_X) \otimes \beta) \dashrightarrow R^j\pi_*(\pi^*\cH^0\FM_{\bighat{B}}(i_{\bighat{B}}^{\flat}\cN')) \expl{\sim_C}{\autoref{proj_formula_Cartier}} \cH^0\FM_{\bighat{B}}(i_{\bighat{B}}^{\flat}\cN').  \] In particular, the injective $V$--module $\cR$ associated to $R^j\pi_*(a_*\tau(\omega_X) \otimes \beta)$ contains $i_{\bighat{B}}^{\flat}\cN'$ as a sub--$V$--crystal. By the $V$--module analogue of \cite[Proposition 3.4.6]{Bockle_Pink_Cohomological_Theory_of_crystals_over_function_fields}, this shows the existence of a morphism of $V$--modules $i^{\flat}_{\bighat{B}}(\cN') \to V^{es, *}\cR$ for some $e \geq 0$, which is injective at the level of crystals (hence its kernel is nilpotent). Since $i_{\bighat{B}}^{\flat}\cN'$ is an injective $V$--module, this morphism is actually injective. Given that $i^{\flat}_{\bighat{B}}(\cN')$ is supported everywhere on $\bighat{B}$, we deduce that so is $\cR$, i.e. $V^0_{\injj}(R^j\pi_*(a_*\tau(\omega_X) \otimes \beta)) = \bighat{B}$. The surjection above also gives that \[ \pi(a(X)) \cni \Supp_{\crys}(R^j\pi_*(a_*\tau(\omega_X) \otimes \beta)) \cni \Supp_{\crys}(\cH^0\FM_{\bighat{B}}(i_{\bighat{B}}^{\flat}\cN')). \] Recall also from the proof of \autoref{fibered_ab_vars} that \[ \pi^{-1}(\Supp_{\crys}(\cH^0\FM_{\bighat{B}}(i_{\bighat{B}}^{\flat}\cN'))) = a(X), \] so \[ \Supp_{\crys}(\cH^0\FM_{\bighat{B}}(i_{\bighat{B}}^{\flat}\cN')) = \pi(a(X)). \qedhere \] 
\end{proof}

Note that if $\beta \in \bighat{B}$, then $\bighat{B}$ is a positive--dimensional component of $V^0_{\injj}(a_*\tau(\omega_X))$. Since $\bighat{B} = -\bighat{B}$, \autoref{Pareschi_char_p} would follow automatically. Therefore, we will assume from now on that $\beta \notin \bighat{B}$.

\begin{lem}\label{twist_by_beta_inverse_also_good}
	We have that $R^j\pi_*(a_*\tau(\omega_X) \otimes \beta^{-1}) \not\sim_C 0$, and that $V^0_{\injj}(R^j\pi_*(a_*\tau(\omega_X) \otimes \beta^{-1}))$ has no isolated point (so it is positive--dimensional).
\end{lem}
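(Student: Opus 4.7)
The strategy is to derive the statement for the $\beta^{-1}$-twist directly from \autoref{higher_pushforward_supported_everywhere} (which established the stronger assertion $V^0_{\injj}(R^j\pi_*(a_*\tau(\omega_X) \otimes \beta)) = \bighat B$) by exploiting a symmetry. The natural candidate is the $(-1)$-involution on abelian varieties, combined with a Cartier-crystal duality for $a_*\tau(\omega_X)$.

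First, since $\pi \colon A \to B$ is a morphism of abelian varieties, one has $\pi \circ (-1)_A = (-1)_B \circ \pi$, which yields a canonical isomorphism of coherent sheaves on $B$:
\[
(-1)_B^* R^j\pi_*(a_*\tau(\omega_X) \otimes \beta) \;\cong\; R^j\pi_*\bigl((-1)_A^*(a_*\tau(\omega_X)) \otimes \beta^{-1}\bigr).
\]
The left-hand side has $V^0_{\injj}$ equal to $(-1)_{\bighat B}^*(\bighat B) = \bighat B$ (as $\bighat B$ is an abelian subvariety, invariant under $(-1)$), which is positive-dimensional; in particular it is non-zero as a Cartier crystal and contains no isolated point. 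Thus, to conclude the lemma, it would suffice to identify the right-hand side with $R^j\pi_*(a_*\tau(\omega_X) \otimes \beta^{-1})$ at the level of Cartier crystals, or at least at the level of their associated injective $V$-crystals on $\bighat B$ via \autoref{equivalence_V_crystals_and_Cartier_crystals}.

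Although $(-1)_A^*(a_*\tau(\omega_X))$ and $a_*\tau(\omega_X)$ need not be isomorphic as sheaves (the image $a(X)$ is not in general $(-1)$-invariant), the required identification up to Cartier nilpotence should follow from a duality. One option is to invoke the Cartier-crystal/constructible $\bF_p$-sheaf duality of \cite[Corollary 5.1.7]{Baudin_Duality_between_perverse_sheaves_and_Cartier_crystals}, combined with the characterisation of $\tau(\omega_X)$ as the minimal nonzero sub-Cartier-module of $\omega_X$ (\autoref{remark_test_module}.\autoref{rem_test}); this should imply that the associated Fourier-Mukai $V$-crystals have $(-1)$-symmetric supports up to nilpotence. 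Alternatively, one can dispense with the $(-1)$ entirely and apply relative Grothendieck-Serre duality to the smooth fibration $\pi$ (which has trivial relative dualizing sheaf, $\omega_{A/B} \cong \cO_A$), giving a direct isomorphism between $R^j\pi_*(a_*\tau(\omega_X) \otimes \beta^{-1})$ and the $\cO_B$-dual of $\pi_*$ of a Grothendieck dual of $a_*\tau(\omega_X)$ twisted by $\beta$, again reducing to the $\beta$-case of \autoref{higher_pushforward_supported_everywhere}.

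The main obstacle is to carry out either duality in a Cartier-equivariant fashion, since the whole argument lives modulo nilpotence. This requires tracking the Cartier structure on $\tau(\omega_X)$ through Grothendieck duality on $X$ (where $\tau(\omega_X)$ is a priori only a sub-sheaf of $\omega_X$, not reflexive), and verifying compatibility with the Fourier-Mukai/$V$-module correspondence of \autoref{equivalence_V_crystals_and_Cartier_crystals} and with the translation/pullback formulae \autoref{properties_Fourier_Mukai_transform}.\autoref{itm:behaviour_under_translation} and \autoref{annyoing_compatibilities}. Once this Cartier-equivariant duality is in place, the non-vanishing and the absence of isolated points both descend from the corresponding statements for $R^j\pi_*(\cdot \otimes \beta)$, since the symmetries involved preserve positive-dimensionality of components.
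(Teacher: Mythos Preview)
Your plan has a genuine gap in both proposed routes, and neither can be completed as stated.

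\emph{The $(-1)$-approach.} The identification $(-1)_A^*(a_*\tau(\omega_X)) \sim_C a_*\tau(\omega_X)$ you hope for is false in general: crystal support is well--defined (see \autoref{rem_support_crystal}), and $(-1)_A^*(a_*\tau(\omega_X))$ is supported on $-a(X)$ while $a_*\tau(\omega_X)$ is supported on $a(X)$. There is no reason for $a(X)$ to be $(-1)$--invariant; at this stage of the argument $a(X)$ is only known to be fibered by translates of $\ker(\pi)$ (\autoref{fibered_ab_vars}). No amount of Cartier--equivariance of the duality functor can repair a mismatch of supports.

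\emph{The relative duality approach.} Relative Grothendieck duality for $\pi$ (with $\omega_{A/B}\cong\cO_A$) relates $R^j\pi_*(\cM\otimes\beta^{-1})$ to the $\cO_B$--dual of $R^0\pi_*(\cM^{\vee}\otimes\beta)$, not to $R^j\pi_*(\cM\otimes\beta)$. So you would need control over $R^0\pi_*$ of the \emph{Grothendieck dual} of $a_*\tau(\omega_X)$, whereas \autoref{higher_pushforward_supported_everywhere} only tells you about $R^j\pi_*$ of $a_*\tau(\omega_X)$ itself. There is no self--duality of $a_*\tau(\omega_X)$ to close this gap: $X$ is only normal, $\tau(\omega_X)\subsetneq\omega_X$ in general, and $a_*\tau(\omega_X)$ may even be supported on a proper subvariety of $A$.

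\emph{What the paper actually does.} The symmetry in $\beta\leftrightarrow\beta^{-1}$ is found not on $A$ but on the \emph{generic fibre} $X_\eta$ of the Stein factorisation $X\to Y\to\pi(a(X))$. From \autoref{higher_pushforward_supported_everywhere} one extracts $H^j(X_\eta,\omega_{X_\eta}\otimes\beta|_{X_\eta})\not\sim_C 0$ (using that $\tau(\omega_X)=\omega_X$ in codimension one and $j=\dim X_\eta$). Serre duality on $X_\eta$ then gives $H^0(X_\eta,\beta^{-1}|_{X_\eta})\not\sim_F 0$, which produces a Frobenius--fixed section and hence an isomorphism of \emph{Frobenius modules} $\cO_{X_\eta}\cong\beta^{-1}|_{X_\eta}$. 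This triviality is manifestly symmetric, so reversing the argument gives the non--nilpotence of $R^j\pi_*(a_*\tau(\omega_X)\otimes\beta^{-1})$. The absence of isolated points is proven by a separate contradiction: an isolated $\gamma\in V^0_{\injj}$ forces $\gamma\in V^{g-j}_{\injj}$, hence $H^g(X,\tau(\omega_X)\otimes a^*(\beta^{-1}\otimes\pi^*\gamma^{p^{es}}))\neq 0$, hence $a^*(\beta^{-1}\otimes\pi^*\gamma^{p^{es}})\cong\cO_X$; injectivity of $a^*$ (\autoref{existence_of_albanese}) then yields $\beta\in\bighat B$, contradicting the standing assumption $\beta\notin\bighat B$. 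Note that this second part genuinely uses that $a$ is the Albanese morphism, which your symmetry approach does not exploit at all.
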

\begin{proof}
	Let \[ \begin{tikzcd}
		X \arrow[r, "f"] & Y \arrow[r, "b"] & \pi(a(X))
	\end{tikzcd} \] denote the Stein factorization of $\pi \circ a \colon X \to \pi(a(X)) \inc B$, and let $X_{\eta}$ denote the generic fiber of $f \colon X \to Y$. By \autoref{higher_pushforward_supported_everywhere}, we know that \[ R^j\pi_*(a_*\tau(\omega_X) \otimes \beta) \expl{\cong}{$a$ is finite} R^j(\pi \circ a)_*(\tau(\omega_X) \otimes a^*\beta) \cong b_*R^jf_*(\tau(\omega_X) \otimes a^*\beta) \] is supported everywhere on $\pi(a(X))$ as a crystal, so \[ H^j(X_{\eta}, (\tau(\omega_X) \otimes \beta)|_{X_{\eta}}) \not\sim_C 0. \] Note that by \autoref{fibered_ab_vars}, we know that $j = \dim(X_{\eta})$. Since $X_{\eta}$ is normal (hence regular in codimension $1$), we know by \autoref{remark_test_module}.\autoref{rem_test} that the cokernel of $(\tau(\omega_X) \otimes \beta)|_{X_{\eta}} \inc \omega_{X_{\eta}} \otimes \beta|_{X_{\eta}}$ is supported in codimension at least two, so \[ H^j(X_{\eta}, \omega_{X_{\eta}} \otimes \beta|_{X_{\eta}}) \not\sim_C 0. \] By Serre duality and \cite[Proposition 2.4]{Patakfalvi_Zdanowicz_Ordinary_varieties_with_trivial_canonical_bundle_are_not_uniruled}, we deduce that \[ H^0(X_{\eta}, \beta^{-1}|_{X_{\eta}}) \not\sim_F 0, \] which implies that there exists a non--zero section in global section of $\beta^{-1}_{X_{\eta}}$ fixed by the Frobenius structure. This global section induces a non--zero morphism of \emph{Frobenius modules} $\cO_{X_{\eta}} \to \beta^{-1}|_{X_{\eta}}$, which is automatically an isomorphism since $\beta^{-1}|_{X_{\eta}} \in \Pic^0(X_{\eta}$). The same then holds for $\beta|_{X_{\eta}}$, so reversing the above argument shows that \[ R^j\pi_*(a_*\tau(\omega_X) \otimes \beta^{-1}) \not\sim_C 0. \]

	Assume now by contradiction that $V^0_{\injj}(R^j\pi_*(a_*\tau(\omega_X) \otimes \beta^{-1}))$ has an isolated point $\gamma \in \bighat{B}$. It must then be in $V^{g - j}_{\injj}(R^j\pi_*(a_*\tau(\omega_X) \otimes \beta^{-1}))$ (recall that $g - j = \dim(B))$. We then obtain by \autoref{generic vanishing}.\autoref{van_and_non_van} that $\gamma \in W^{g - j}_F(R^j\pi_*(a_*\tau(\omega_X) \otimes \beta^{-1}))$, so in particular  \[ H^{g - j}(B, R^j\pi_*(a_*\tau(\omega_X) \otimes \beta^{-1}) \otimes \gamma^{p^{es}}) \neq 0 \] for $e \gg 0$. Since
	\begin{align*}
		H^{g - j}(B, R^j\pi_*(a_*\tau(\omega_X) \otimes \beta^{-1}) \otimes \gamma^{p^{es}}) & \cong H^g(A, a_*\tau(\omega_X) \otimes \beta^{-1} \otimes \pi^*\gamma^{p^{es}}) \\ & \cong H^g(X, \tau(\omega_X) \otimes a^*(\beta^{-1} \otimes \pi^*\gamma^{p^{es}})),
	\end{align*}
	we automatically deduce that $g = d$ and that, more importantly, $a^*(\beta^{-1} \otimes \pi^*\gamma^{p^{es}}) \cong \cO_X$. By \autoref{existence_of_albanese}, this shows that $\beta \cong \pi^*\gamma^{p^{es}} \in \bighat{B}$, contradicting our assumptions.
\end{proof}

\begin{lem}\label{final_bit_of_the_proof_of_Pareschi}
	Let $Z' \inc V^0_{\injj}(R^j\pi_*(a_*\tau(\omega_X) \otimes \beta^{-1})) \inc \bighat{B}$ be an irreducible component of maximal dimension. Then $-\beta + p^{es}(Z') \inc V^0_{\injj}(a_*\tau(\omega_X))$ for some $e > 0$. In particular, \autoref{Pareschi_char_p} holds.
\end{lem}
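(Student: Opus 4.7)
The plan is to prove the stronger statement that $Z' \subset V^0_{\injj}(\cF)$ already holds, where $\cF := a_*\tau(\omega_X) \otimes \beta^{-1}$. Since by \autoref{properties_Fourier_Mukai_transform}.\autoref{itm:behaviour_under_translation} we have $V^0_{\injj}(\cF) = \beta + V^0_{\injj}(a_*\tau(\omega_X))$, this directly yields $-\beta + Z' \subset V^0_{\injj}(a_*\tau(\omega_X))$, which suffices for the ``In particular'' Pareschi conclusion (and is in fact stronger than the stated form with $p^{es}(Z')$).

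Setting $\cN' := \cH^0\FM_A(\cF)$ and $\cG := \cH^0\FM_B(R^j\pi_*\cF)$, the hypothesis $Z' \subset V^0_{\injj}(R^j\pi_*\cF) = -\Supp\cG_{\injj}$ translates to $-Z' \subset \Supp\cG_{\injj}$, while the goal $Z' \subset V^0_{\injj}(\cF)$ translates to $-Z' \subset \Supp\cN'_{\injj}$. Both reduce to establishing the crystal-equivalence of $V$-modules on $\bighat{B}$:
$$\cG \sim_V \Tor_j^{\cO_{\bighat{A}}}(\cN', \cO_{\bighat{B}}),$$
since the right-hand side is supported inside $\Supp\cN' \cap \bighat{B}$, yielding $\Supp_{\crys}\cG \subset \Supp_{\crys}\cN' \cap \bighat{B}$.

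To prove this crystal-equivalence, I would invoke the derived-functor identification $\FM_B(R\pi_*\cF) \cong L\bighat{\pi}^*\FM_A(\cF)$ from \autoref{properties_Fourier_Mukai_transform}.\autoref{itm:behaviour_pushforwards_and_pullbacks} and compute $\cH^{-j}$ of both sides via the two natural hypercohomology spectral sequences. The key input is \autoref{equivalence_V_crystals_and_Cartier_crystals}: the $V$-crystal-triviality of $\cH^i\FM$ in degrees $i \neq 0$ forces each spectral sequence to have only one row surviving non-nilpotently. On the $L\bighat{\pi}^*\FM_A(\cF)$ side, only the row corresponding to $\cH^0\FM_A(\cF) = \cN'$ contributes, yielding $\cH^{-j}L\bighat{\pi}^*\FM_A(\cF) \sim_V \Tor_j(\cN', \cO_{\bighat{B}})$; on the $\FM_B(R\pi_*\cF)$ side, only the row corresponding to $\cH^0\FM_B(R^j\pi_*\cF) = \cG$ contributes, yielding $\cH^{-j}\FM_B(R\pi_*\cF) \sim_V \cG$. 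The main technical difficulty is the careful spectral-sequence bookkeeping, especially handling the contravariance of $\FM_B$ and propagating crystal-equivalences through derived functors.

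For the ``In particular'' conclusion, set $Z := -\beta + Z'$. It is positive-dimensional by \autoref{twist_by_beta_inverse_also_good} and contained in $V^0_{\injj}(a_*\tau(\omega_X))$ by the main claim. Its negative $-Z = \beta - Z'$ lies in the coset $\beta + \bighat{B}$ (since $\bighat{B}$ is a subgroup containing $Z'$), and $\beta + \bighat{B}$ is a positive-dimensional irreducible component of $V^0_{\injj}(a_*\tau(\omega_X))$ by the choice of $\beta$ made before \autoref{fibered_ab_vars}. Hence $-Z \subset V^0_{\injj}(a_*\tau(\omega_X))$, verifying \autoref{Pareschi_char_p}.
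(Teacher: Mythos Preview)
Your approach is correct and genuinely different from the paper's. The paper argues pointwise via cohomology: after base-changing to an uncountable field, it takes a very general $\alpha \in Z'$, uses \autoref{generic vanishing}.\autoref{van_and_non_van} to produce nonvanishing of $H^r_{ss}(B, R^j\pi_*\cF \otimes \alpha)$ (where $r = \codim Z'$), and then runs the Leray spectral sequence for $\pi$ to lift this to $H^{j+r}_{ss}(A, \cF \otimes \pi^*\alpha) \neq 0$; the unwanted Leray terms $H^l_{ss}(B, R^i\pi_*\cF \otimes \alpha)$ for $l > r$ are killed by the codimension bound \autoref{generic vanishing}.\autoref{itm:gv_codim}, and this is precisely why the paper needs $Z'$ to be of \emph{maximal} dimension. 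The conclusion then comes from \autoref{very_gen_pts_in_V0_implies_in_V0}, which only gives $p^{es}(Z')$ rather than $Z'$ itself.

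Your sheaf-level argument is cleaner: it avoids the uncountable base field, the ``very general'' machinery, and the maximality hypothesis on $Z'$, and it yields the sharper inclusion $Z' \subseteq V^0_{\injj}(\cF)$ (equivalently $V^0_{\injj}(R^j\pi_*\cF) \subseteq V^0_{\injj}(\cF) \cap \bighat{B}$). The price is that you must justify working in the derived category of $V$-crystals and, crucially, that the isomorphism $\FM_B \circ R\pi_* \cong L\bighat{\pi}^* \circ \FM_A$ from \autoref{properties_Fourier_Mukai_transform}.\autoref{itm:behaviour_pushforwards_and_pullbacks} is compatible with the $V$-structures at the derived level, not just on $\cH^0$. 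The paper only records this compatibility for $\cH^0$ (and in the dual direction $\pi^* \leftrightarrow \bighat{\pi}_*$) in \autoref{annyoing_compatibilities}, but the proof there already constructs the relevant commutative square in the derived category before passing to $\cH^0$, so your needed compatibility follows by the same method. Once that is in place, your spectral-sequence collapse is immediate: in the crystal derived category $\FM_B$ is a t-exact (contravariant) equivalence by \autoref{equivalence_V_crystals_and_Cartier_crystals}, so $\cH^{-j}\FM_B(R\pi_*\cF) \sim_V \cH^0\FM_B(R^j\pi_*\cF) = \cG$ and $\cH^{-j}L\bighat{\pi}^*\FM_A(\cF) \sim_V \Tor_j(\cN', \cO_{\bighat{B}})$ with no differentials to worry about. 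One small point: when passing from $\Supp(\Tor_j(\cN', \cO_{\bighat{B}}))$ to $\Supp(\cN'_{\injj})$ you should replace $\cN'$ by $\cN'_{\injj}$ first (the kernel is nilpotent, so $\Tor_j$ changes only by nilpotents, and then $\Supp(\Tor_j(\cN'_{\injj}, \cO_{\bighat{B}})) \subseteq \Supp(\cN'_{\injj})$).
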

\begin{proof}
	The part after ``In particular'' is really an immediate consequence, since $-\beta + p^{es}(Z') \inc -\beta + p^{es}(\bighat{B}) = -\beta + \bighat{B} = -(\beta + \bighat{B})$, while $\beta + \bighat{B} \inc V^0_{\injj}(a_*\tau(\omega_X))$ by construction.
	
	Let us therefore show the first statement. We may base change and assume that $k$ is uncountable. Let $r \geq 0$ denote the codimension of $Z'$ in $\bighat{B}$. By \autoref{very_gen_pts_in_V0_implies_in_V0}, it is enough to check that for very general $\alpha \in Z'$, \[ H^{j + r}_{ss}(A, a_*\tau(\omega_X) \otimes \beta^{-1} \otimes \pi^*\alpha) \neq 0 \] (recall that $p^s(\beta) = \beta$). By \autoref{generic vanishing}.\autoref{van_and_non_van}, we know that for $\alpha \in Z'$ very general, we have \[ H^r_{ss}(B, R^j\pi_*(a_*\tau(\omega_X) \otimes \beta^{-1}) \otimes \alpha) \neq 0. \] Hence, it is enough to show that for $\alpha \in Z'$ very general, \[ H^{j + r}_{ss}(A, a_*\tau(\omega_X) \otimes \beta^{-1} \otimes \pi^*\alpha) \cong H^r_{ss}(B, R^j\pi_*(a_*\tau(\omega_X) \otimes \beta^{-1}) \otimes \alpha). \] By a Leray spectral sequence argument and the fact that $R^{>j}\pi_* = 0$ (all fibers of $\pi$ are $j$--dimensional), it is enough to show that for all $i < j$, $l > r$ and $\alpha \in Z'$ very general, \[ H^l_{ss}(B, R^i\pi_*(a_*\tau(\omega_X) \otimes \beta^{-1}) \otimes \alpha) = 0. \] This follows immediately from the fact that $V^l_{\injj}(R^i\pi_*(a_*\tau(\omega_X) \otimes \beta^{-1})) \inc \bighat{B}$ has codimension at least $l > r$ (see \autoref{generic vanishing}.\autoref{itm:gv_codim}) and \autoref{generic vanishing}.\autoref{van_and_non_van}.
\end{proof}


We are therefore done with the proof of \autoref{Pareschi_char_p}. As a side note, remark that one can generalize \autoref{fibered_ab_vars} as follows. 

\begin{prop}\label{more_general_Albanese_image}
	Assume that $a$ is generically finite, that $A$ has no supersingular factor and that $V^0_{\injj}(a_*\omega) \neq \bighat{A}$ for some non--zero Cartier module $\omega \inc \omega_X$. Then $a(X)$ is fibered by abelian varieties.
\end{prop}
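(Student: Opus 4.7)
The strategy is to mimic the proof of \autoref{fibered_ab_vars}, identifying the two places where it relied on stronger hypotheses and modifying them. Specifically, that proof used (i) the finiteness of $a$ to invoke the adjunction $a_* \dashv a^{\flat}$, and (ii) the simplicity of $\tau(\omega_X)$ to turn a non-zero crystal morphism into an injection with full crystal support. I will split the analysis into a degenerate case and a main case, according to whether $V^0_{\injj}(a_*\omega)$ contains a positive-dimensional component.

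If $V^0_{\injj}(a_*\omega)$ has no positive-dimensional component, it is a finite set of (torsion) points, so the injective $V$--module attached to $a_*\omega$ is of finite length. By \autoref{properties_Fourier_Mukai_transform}.\autoref{itm:equiv_cat_unipotent}, $a_*\omega$ is crystal-equivalent to a homogeneous vector bundle on $A$. Any non-zero Cartier submodule $\omega \subseteq \omega_X$ has generic stalk equal to the unit Cartier module $(\omega_X)_{\eta}$, so $\omega$ has full crystal support $X$, and by properness of $a$ the pushforward $a_*\omega$ is non-nilpotent. A non-zero homogeneous vector bundle has full support $A$, hence $a(X) = A$, and the conclusion holds trivially.

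Assume now that $V^0_{\injj}(a_*\omega)$ contains a positive-dimensional component. As in the opening of \autoref{fibered_ab_vars}, I would use \autoref{generic vanishing}.\autoref{itm:gv_p_closed} and the no-supersingular-factor hypothesis to extract a positive-dimensional torsion translate $\beta + \bighat{B} \subseteq V^0_{\injj}(a_*\omega)$, let $\pi \colon A \to B$ be the dual surjection, and form the Fourier--Mukai surjection of Cartier crystals
\[
a_*\omega \otimes \beta \twoheadrightarrow \pi^*\cH^0\FM_{\bighat{B}}(i_{\bighat{B}}^{\flat}\cN'),
\]
giving $\pi^{-1}\bigl(\Supp_{\crys}(\cH^0\FM_{\bighat{B}}(i_{\bighat{B}}^{\flat}\cN'))\bigr) \subseteq a(X)$. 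To upgrade this to equality (equivalently, to $a(X) = \pi^{-1}(\pi(a(X)))$, i.e.\ to the desired fibration by $\ker \pi$), I would take the Stein factorization $a = b \circ f$ with $f \colon X \to Y$ birational (using generic finiteness of $a$) and $b \colon Y \to A$ finite. By Grothendieck duality $f_*\omega_X = \omega_Y$, so $f_*\omega$ is a non-zero Cartier submodule of $\omega_Y$ containing $\tau(\omega_Y)$ by minimality, with equal generic stalks at $\eta_Y$. The projection formula factors the above surjection through $b_*(f_*\omega \otimes b^*\beta)$, and the finite adjunction $b_* \dashv b^{\flat}$ yields a non-zero crystal morphism $f_*\omega \otimes b^*\beta \dra b^{\flat}\bigl(\pi^*\cH^0\FM_{\bighat{B}}(i_{\bighat{B}}^{\flat}\cN')\bigr)$ on $Y$. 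Restricting to $\tau(\omega_Y) \otimes b^*\beta$ and invoking the simplicity of $\tau(\omega_Y)$ as in \autoref{fibered_ab_vars} would imply that the target has full crystal support $Y$; pushing down along the finite map $b$ would then produce $\Supp_{\crys}(\pi^*\cH^0\FM_{\bighat{B}}(i_{\bighat{B}}^{\flat}\cN')) = a(X)$, as required.

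The main obstacle is verifying that the restricted morphism $\tau(\omega_Y) \otimes b^*\beta \dra b^{\flat}\bigl(\pi^*\cH^0\FM_{\bighat{B}}(i_{\bighat{B}}^{\flat}\cN')\bigr)$ is non-zero as a Cartier crystal, i.e.\ that the nilpotent kernel of the Fourier--Mukai surjection cannot absorb the $1$-dimensional unit Cartier part of $\tau(\omega_Y)_{\eta_Y}$. I would handle this via a stalk-level analysis over the function field $K(Y)$, decomposing the relevant Cartier modules into their unit and nilpotent parts (equivalently, via the equivalence with \'etale $\bF_p$-sheaves used in \autoref{equivalence_V_crystals_and_Cartier_crystals}) and using the rank formula of \autoref{rank_of_pushforward} to track generic ranks through the surjection and its kernel.
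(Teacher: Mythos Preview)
Your proposal takes an unnecessarily complicated route and leaves a genuine gap at the decisive step. The paper's proof is a two-line reduction: take the Stein factorization $X \xrightarrow{f} Y \xrightarrow{b} A$, note that $f_*\omega \subseteq f_*\omega_X \subseteq \omega_Y$ with $f_*\omega$ non-nilpotent, so by minimality $\tau(\omega_Y) \subseteq f_*\omega$. Pushing forward by $b$ gives $b_*\tau(\omega_Y) \subseteq a_*\omega$, hence $V^0_{\injj}(b_*\tau(\omega_Y)) \subseteq V^0_{\injj}(a_*\omega) \neq \bighat{A}$. Now \autoref{fibered_ab_vars} (and the remark following it) applies verbatim to the finite map $b \colon Y \to A$, and $b(Y) = a(X)$. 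You already assemble all of these ingredients; the misstep is what you do with them.

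Instead of invoking \autoref{fibered_ab_vars} on $(Y,b,\tau(\omega_Y))$, you re-run its proof with the original $\omega$, extracting a component $\beta + \bighat{B}$ of $V^0_{\injj}(a_*\omega)$, and only afterwards try to restrict to $\tau(\omega_Y)$. This is what manufactures your ``main obstacle''. The problem is real and your sketch does not resolve it: the component $\beta + \bighat{B}$ was chosen inside $V^0_{\injj}(a_*\omega)$ and need not lie in the smaller set $V^0_{\injj}(b_*\tau(\omega_Y))$, so there is no reason the Fourier--Mukai construction for that particular $\beta,\bighat{B}$ should see $\tau(\omega_Y)$ at all. Concretely, your proposed stalk-level check would need $b^{\flat}\bigl(\pi^*\cH^0\FM_{\bighat{B}}(i_{\bighat{B}}^{\flat}\cN')\bigr)$ to be non-nilpotent at $\eta_Y$, but that is exactly the statement $\eta_{a(X)} \in \Supp_{\crys}(\pi^*\cH^0\FM_{\bighat{B}}(i_{\bighat{B}}^{\flat}\cN'))$ you are trying to prove; the rank formula of \autoref{rank_of_pushforward} does not break this circularity.

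Your degenerate case also has a gap: from $a_*\omega \sim_C \cV$ with $\cV$ a homogeneous vector bundle you only get $\Supp_{\crys}(a_*\omega) = \Supp_{\crys}(\cV)$, not $a(X) = \Supp(\cV) = A$. Crystal equivalence does not preserve scheme-theoretic support, and there is no guarantee the induced Cartier structure on $\cV$ is non-nilpotent everywhere. (Incidentally, $f_*\omega_X \subseteq \omega_Y$ is the correct statement; equality need not hold without further hypotheses on $Y$.) The uniform fix for both cases is the one the paper uses: transport the hypothesis $V^0_{\injj}(\,\cdot\,)\neq\bighat{A}$ along the inclusion $b_*\tau(\omega_Y)\subseteq a_*\omega$ and then cite \autoref{fibered_ab_vars} for $(Y,b)$ rather than reproving it.
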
 
\begin{proof}
	Let \[ \begin{tikzcd}
		X \arrow[r, "f"] & Y \arrow[r, "b"] & A
	\end{tikzcd}\] denote the Stein factorization of $a$. Since $f_*\omega \inc f_*\omega_X \inc \omega_Y$ and $f_*\omega$ is non--nilpotent (it agrees with $f_*\omega_X$ generically by \autoref{remark_test_module}.\autoref{rem_test}), we know by definition $\tau(\omega_Y) \inc f_*\omega$. Hence, we also have that $b_*\tau(\omega_Y) \inc a_*\omega$, so $V^0_{\injj}(b_*\tau(\omega_Y)) \neq \bighat{A}$. We then conclude by \autoref{fibered_ab_vars}.
\end{proof}

\subsection{The case of finite cohomological support locus}

Throughout, we fix an integer $s > 0$. All our Cartier modules will be seen as $s$--Cartier modules. Our objective is to show the following result:

\begin{prop}\label{V0_zero_dim_implies_done}
	Assume that $a$ is generically finite, let $\omega \inc \omega_X$ be a sub--Cartier module, and assume that $V^0_{\injj}(a_*\omega)$ is zero--dimensional. Then $a$ is surjective, and the finite part of $a \colon X \to A$ is purely inseparable.
\end{prop}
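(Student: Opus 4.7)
The plan is to work on the Fourier--Mukai side via \autoref{equivalence_V_crystals_and_Cartier_crystals}: setting $\cN := \cH^0\FM_A(a_*\omega)$, the hypothesis together with \autoref{generic vanishing}.\autoref{itm:gv_p_closed} implies that $\cN_{\injj}$ is a $V$-module of finite length, supported on finitely many prime-to-$p$ torsion points of $\bighat{A}$. Write $V^0_{\injj}(a_*\omega) = \{\beta_1, \dots, \beta_k\}$. Replacing $X$ by the intermediate variety of its Stein factorization (using that $a$ is generically finite, so the fibration part is birational) reduces to the case where $a$ is finite; and since $\tau(\omega_X) \subseteq \omega$ whenever $\omega$ is non-nilpotent (\autoref{remark_test_module}), one may further assume $\omega = \tau(\omega_X)$.

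For surjectivity, combining \autoref{properties_Fourier_Mukai_transform}.\autoref{itm:equiv_cat_unipotent} with the translation behavior \autoref{properties_Fourier_Mukai_transform}.\autoref{itm:behaviour_under_translation} shows that $W := \FM_{\bighat{A}}(\cN_{\injj})$ is a vector bundle of positive rank $\ell$ on $A$ (a direct sum of unipotent vector bundles twisted by torsion line bundles corresponding to the $\beta_i$), and that $a_*\omega \sim_C W$ as Cartier crystals. Suppose for contradiction that $a(X) = Z \subsetneq A$. By definition of the Serre quotient, the crystal isomorphism provides nilpotent sub-Cartier modules $N_1 \subseteq a_*\omega$ and $N_2 \subseteq W$ together with an isomorphism $W/N_2 \cong a_*\omega/N_1$ of Cartier modules. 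Since restriction of a Cartier module to a dense open preserves non-nilpotence, and $W$ is non-nilpotent, any sub-Cartier module of $W$ with the same generic rank as $W$ would have to be non-nilpotent; hence $N_2$ has strictly smaller generic rank than $\ell$, so $W/N_2$ has full support $A$. But $a_*\omega/N_1$ has support contained in $Z \subsetneq A$, a contradiction.

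For the purely inseparable conclusion, \autoref{rank_of_pushforward} gives $\rank_{\crys}(a_*\omega) = [K(X):K(A)]_{\sep}$, and under the FM equivalence this rank equals the total length $\ell = \sum_{i=1}^k \ell_i$ of $\cN_{\injj}$. The formula of \autoref{equivalence_V_crystals_and_Cartier_crystals} combined with exactness of top Tor identifies $\ell_i = h^g_{ss}(X, \tau(\omega_X) \otimes a^*\beta_i)$. Since $\omega_X/\tau(\omega_X)$ has support of codimension at least two (as $X$ is normal), a long exact sequence gives $h^g(X, \tau(\omega_X) \otimes L) = h^g(X, \omega_X \otimes L) = h^0(X, L^{-1})^\vee$ for any $L \in \Pic(X)$ via Serre duality on the normal proper variety $X$. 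For $\beta_i = 0$ this yields $\ell_0 \leq h^0(X, \cO_X) = 1$; for $\beta_i \neq 0$, the pullback $a^*\beta_i^{-1}$ is a non-trivial prime-to-$p$ torsion line bundle on $X$ (by the injectivity of $\Pic^0(A) \to \Pic^0(X)$ from \autoref{existence_of_albanese}), so $h^0 = 0$ by the standard cyclic cover argument, forcing $\ell_i = 0$. Hence $\ell \leq 1$, and combining with $\ell \geq 1$ (from the non-nilpotence of $\omega$) we conclude $\ell = 1$; equivalently, the finite part of $a$ is purely inseparable.

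The main obstacle will be the diagram chase in the surjectivity step, namely to rigorously rule out ``large'' nilpotent sub-Cartier modules $N_2 \subseteq W$ that could concentrate the support of $W$ onto $Z$. The key point is that $W$ is a direct sum of non-nilpotent unipotent twists, hence has non-nilpotent restriction to any dense open, which forces every nilpotent sub-Cartier module of $W$ to have strictly smaller generic rank than $W$ itself.
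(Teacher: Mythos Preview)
There are two genuine gaps.

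\textbf{The points of $V^0_{\injj}(a_*\omega)$ are $p$-power torsion, not prime-to-$p$ torsion.} Your invocation of \autoref{generic vanishing}.\autoref{itm:gv_p_closed} does not give what you claim: that statement concerns the codimension-$i$ locus $S^i$, and even granting $V^0_{\injj} = S^g$ here, the density assertion does not force a finite set of points to consist of prime-to-$p$ torsion. In fact the paper establishes the opposite: every $\beta \in -\Supp(\cN)$ satisfies $p^{es}\beta = 0$ for $e \gg 0$, and the remark following \autoref{V0_zero_dim_implies_done_ordinary_case} confirms that nonzero $p$-power torsion points do occur in $V^0_{\injj}$ in general. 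This destroys your endgame: unwinding the inverse system gives $h^g_{ss}(X,\omega\otimes a^*\beta_i) = \lim_e h^0(X, a^*\beta_i^{-p^{es}})$, and for $\beta_i$ nonzero $p$-power torsion this limit is $1$, not $0$. There is also a separate problem: your identification $\ell_i = h^g_{ss}$ confuses $\dim\Tor_g(M,k)$ with $\length(M)$, and $\length(\cN_{\injj})$ with $\rank_{\crys}(a_*\omega)$; neither equality holds without further argument.

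\textbf{The surjectivity argument is circular.} The object $W = \FM_{\bighat A}(\cN_{\injj})$ is indeed a vector bundle, but its Cartier structure is inherited from the $V$-structure on $\cN_{\injj}$ and need not be non-nilpotent at every point of $A$. Since $\Supp_{\crys}$ is a crystal invariant (\autoref{rem_support_crystal}), one has $\Supp_{\crys}(W) = \Supp_{\crys}(a_*\omega) \subseteq a(X)$; so if $a$ were not surjective, $W$ would simply be nilpotent over $A \setminus a(X)$. Your claim that a nilpotent sub-Cartier module of $W$ must have smaller generic rank than $W$ presupposes that $W$ is generically non-nilpotent, which is exactly surjectivity. (Your description of a crystal isomorphism via two nilpotent subobjects is also not the general shape of a morphism in a Serre quotient.)

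The paper's approach differs at both points. Surjectivity comes from $H^g(A, F^{es}_* a_*\omega \otimes \beta^{-1}) \neq 0$, which forces the \emph{sheaf} support of $a_*\omega$ to be all of $A$. The rank-one conclusion is not a direct length count: one first uses the $p$-power torsion property to produce a \emph{unit} unipotent bundle $\cV' \sim_C a_*\omega$ (via the $V^s_*$-stable submodule $\cN' \subseteq \cN$, which is then supported at $0$), and then shows $\rank(\cV') = 1$ by an \'etale base change along the Verschiebung that trivializes $\cV'$, combined with the universal identity $h^d_{ss}(Y,\omega_Y)=1$ on the resulting connected cover.
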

\begin{proof}
	As in the proof of \autoref{more_general_Albanese_image}, we may assume that $a$ is finite. Let $\cN$ be the injective $V$--module associated to $a_*\omega$. \\
	
	\noindent \textbf{Step 1:} \emph{The Albanese map is surjective, and the closed subset $V^0_{\injj}(a_*\omega) = -\Supp(\cN) \inc \bighat{A}$ consists only of $p$--power torsion line bundles.} \\
	
	Let us start with proving that $\Supp(\cN)$ consists of $p$--power torsion points. Let $\beta \in \Supp(\cN)$. Given that $\cN$ is supported in dimension zero by assumption, there exists an injection $k(\beta) \inj \cN$ of coherent sheaves. In particular, $0 \neq \Tor_g(k(\beta), k(\beta)) \inj \Tor_g(\cN, k(\beta))$ so $-\beta \in V^g_{\injj}(a_*\omega)$. By \autoref{generic vanishing}.\autoref{van_and_non_van}, we deduce that \[ \lim H^g(A, F^{es}_*a_*\omega \otimes \beta^{-1}) \neq 0. \] However, \[ H^g(A, F^{es}_*a_*\omega \otimes \beta^{-1}) \cong H^g(A, a_*\omega \otimes \beta^{-p^{es}}) \expl{\cong}{$a$ is finite} H^g(X, \omega \otimes a^*\beta^{-p^{es}}) \: \expl{\cong}{as in the proof of \autoref{twist_by_beta_inverse_also_good}} \: H^0(X, a^*\beta^{-p^{es}}) \] so given that $a^*$ is injective (\autoref{existence_of_albanese}), we deduce that $p^{es}(\beta)= 0$ for $e \gg 0$. 
	
	Let us finish now prove the surjectivity of $a$. First, note that $a_*\omega \not\sim_C 0$, since $F^s_*(a_*\omega) \to a_*\omega$ is generically surjective on $a(X)$ ($a$ is generically finite). We therefore obtain that $V^0_{\injj}(a_*\omega) \neq \emptyset$, and hence the argument above shows that $H^g(A, F^{es}_*a_*\omega \otimes \beta^{-1}) \neq 0$ for some $\beta \in \bighat{A}$ and $e > 0$. In particular, the support of the coherent sheaf $F^{es}_*a_*\omega$ must be the whole $A$ (otherwise $H^g$ would vanish for dimensionsal reasons). This shows that $a$ is surjective. \\
	
%
%

	\noindent \textbf{Step 2:} \emph{There exists a unipotent vector bundle $\cV'$ on $A$ with the structure of a unit Cartier module such that $\cV' \sim_C a_*\omega$.} \\ 
	
	Consider the adjoint structural morphism $\psi \colon V^s_*\cN \to \cN$ (see \autoref{restriction_of_a_V_module_to_subschemes}), and let $\cN'$ denote the image of $\psi^{e}$ for $e \gg 0$ (these images stabilize since $\cN$ has finite length). By construction, $V^s_*\cN' \to \cN'$ is surjective. Given that $\cN'$ and $V^s_*\cN'$ have the same length, we automatically obtain that $V^s_*\cN' \to \cN'$ is an isomorphism. In particular, \[ \Supp(\cN') = \Supp(V^s_*\cN') = V^s(\Supp(\cN')), \] so given that $V$ is set--theoretically the same as multiplication by $p$ on $\bighat{A}$, we deduce by Step 1 that $\Supp(\cN') = \{0\}$. We then know by \autoref{properties_Fourier_Mukai_transform}.\autoref{itm:equiv_cat_unipotent} that $\cV' \coloneqq \FM_{\bighat{A}}(\cN')$ is a unipotent vector bundle. Furthermore, the fact that $V^s_*\cN' \to \cN'$ is an isomorphism implies that $\cV'$ is unit (use for example \autoref{filtration_for_V_modules_supp_at_0} or the same strategy as in the proof of \autoref{annyoing_compatibilities}). Finally, since the morphism $V^{es}_*\cN \to \cN$ factors through $\cN'$ for $e \gg 0$, we obtain that $\cN' \sim_V \cN$, whence $a_*\omega \sim_C \cV'$ by \autoref{equivalence_V_crystals_and_Cartier_crystals}. \\ 
	
	\noindent \textbf{Step 3:} \emph{Perform some étale base change to show that $\cV'$ has rank $1$. In other words, $a_*\omega \sim_C \omega_A$}. \\
	
	The idea is to use the same étale base change trick as in \cite{Hacon_Patakfalvi_Zhang_Bir_char_of_AVs}. Since $\cV'$ is unit, we obtain that $j \coloneqq \rank(\cV') = \rank_{\crys}(\cV') = \rank_{\crys}(a_*\omega)$. Our goal is to show that $j = 1$.
	 
	Since $\cN'$ is supported at $0$, there exists $N > 0$ such that $F^N_*\cN'$ is a $k(0)$--vector space, hence $F^N_*\cN' \cong k(0)^{\oplus j}$. In other words, $V^{N, *}(\cV') \cong \omega_A^{\oplus j}$ by \autoref{properties_Fourier_Mukai_transform}.\autoref{itm:behaviour_pushforwards_and_pullbacks} (note that here, $V$ denotes the Verschiebung of $A$ and not that of $\bighat{A}$). Write $V^N = \phi \circ \mu$, where $\mu \colon A \to B$ is purely inseparable and $\phi \colon B \to A$ is étale.
	
	Let us show that as coherent sheaves, $\phi^*\cV' \cong \omega_B^{\oplus j}$. Since $\mu$ is purely inseparable, we have a factorization \[ \begin{tikzcd}
		B \arrow[rr, "\ttilde{\mu}"] \arrow[rrrr, "F^M"', bend right] &  & A \arrow[rr, "\mu"] &  & B
	\end{tikzcd}  \] for some $M > 0$. Given that $F^* = F^{\flat}$ on an abelian variety (see \autoref{rem:upper_shriek_and_pullback_are_equal_for_isogeny}) and that $\cV'$ is unit, we have by definition an isomorphism $F^{M, *}\phi^*\cV' \cong \phi^*\cV'$ (up to making $M > 0$ bigger). Hence, \[ \phi^*\cV' \cong F^{M, *}\phi^*\cV' \cong \ttilde{\mu}^*\mu^*\phi^*\cV' \cong \ttilde{\mu}^*\omega_A^{\oplus j} \cong \omega_B^{\oplus j}. \]
	Now, consider the base change square \[
	\begin{tikzcd}
		Y \arrow[r, "\psi"] \arrow[d, "b"'] & X \arrow[d, "a"]              \\
		B \arrow[r, "\phi"']                     & A.               
	\end{tikzcd} 
	\] Since $\phi$ and $\psi$ are étale, we have $\phi^{\flat} = \phi^*$ and $\psi^{\flat} = \psi^*$ (see \stacksproj{0FWI}), so we can pullback Cartier modules (and crystals) by $\phi$ and $\psi$. Let $\omega' \coloneqq \psi^*\omega \inc \omega_Y$. By flat base change, we obtain that \[ \omega_B^{\oplus j} \sim_C \phi^*(a_*\omega) \cong b_*\omega'. \]
	
	By the same argument as in the proof of \cite[Theorem 3.2]{Hacon_Patakfalvi_Zhang_Bir_char_of_AVs}, we know that $Y$ is connected (and hence normal since $b$ is étale). We can therefore argue as in the proof of \autoref{twist_by_beta_inverse_also_good} and obtain that $h^d_{ss}(Y, \omega') = h^d_{ss}(Y, \omega_Y) = 1$. Let $\theta$ denote the induced unit Cartier module structure on $b_*\omega'$. Since $b$ is finite, we see that $h^d(Y, \omega') = h^d_{ss, \theta}(B, \omega_B^{\oplus j})$, so we are left to show that $h^d_{ss, \theta}(B, \omega_B^{\oplus j}) = j$ to finish this step. Given that $h^d(B, \omega_B^{\oplus j}) = j$ (recall that $d = g$ since $a$ is surjective by Step 1), it is enough to show that the induced map $H^d(\theta)$ is bijective. By definition of an adjunction, the morphism $\theta$ is given by the composition \[ \begin{tikzcd}
		F^s_*(\omega_B^{\oplus j}) \arrow[rr, "\theta^{\flat}"] &  & F^s_*F^{s, \flat}(\omega_B^{\oplus j}) \arrow[rr] &  & \omega_B^{\oplus j},
	\end{tikzcd} \] where the right map is the counit of the adjunction (in particular it does not depend on $\theta$). Given that the left map in an isomorphism, we have proven that the quantity $h^d_{ss, \theta}(B, \omega_B^{\oplus j})$ does not depend on the chosen \emph{unit} Cartier module stucture $\theta$ on $\omega_B^{\oplus j}$. Since this quantity is $j$ with the standard Cartier operator (apply Serre duality), we are done with this step. \\


	\noindent \textbf{Step 4:} \emph{Conclude the proof.} \\
	
	\noindent We have that \[ 1 = \rank_{\crys}(\omega_A) \expl{=}{Step 4} \rank_{\crys}(a_*\omega) \expl{=}{\autoref{rank_of_pushforward}} [K(X) : K(A)]_{\sep}\rank_{\crys}(\omega), \] so the extension $K(X)/K(A)$ is purely inseparable.
\end{proof}

Let us also prove a variant of \autoref{V0_zero_dim_implies_done}.

\begin{prop}\label{V0_zero_dim_implies_done_ordinary_case}
	Let $\omega \inc \omega_X$ be a sub--Cartier module. Assume that $a$ is generically finite, that $V^0_{\injj}(a_*\omega) = \{0\}$ and that $A$ is ordinary. Then $a$ is birational.
\end{prop}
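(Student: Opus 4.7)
The plan is to adapt the proof of Proposition~\ref{V0_zero_dim_implies_done}, exploiting the ordinarity of $A$ to sharpen the conclusion from ``finite part is purely inseparable'' to ``$a$ is birational.'' Since $\{0\}$ is in particular zero--dimensional, Proposition~\ref{V0_zero_dim_implies_done} applies and gives that $a$ is surjective and the finite part of its Stein factorization is purely inseparable. Because $a$ is generically finite and surjective, $\dim X = g$, and normality of $X$ together with Stein factorization allows us to replace $X$ by the finite intermediate, reducing to the case where $a \colon X \to A$ is finite and purely inseparable.

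I would then re--run Steps 2 and 3 of the proof of Proposition~\ref{V0_zero_dim_implies_done}, using ordinarity as follows: by \autoref{basic facts about AV}.\autoref{itm:ordinary_iff_V_etale}, the Verschiebung $V \colon \bighat{A} \to \bighat{A}$ is étale, so $V^N$ is étale of degree $p^{Ng}$, and in the factorization $V^N = \phi \circ \mu$ used in Step~3 we may simply take $\phi = V^N$ and $\mu = \id$. The base--change square becomes
\[
\begin{tikzcd}
Y \arrow[r, "\psi"] \arrow[d, "b"'] & X \arrow[d, "a"] \\
A \arrow[r, "V^N"'] & A,
\end{tikzcd}
\]
with $\psi$ étale (so $Y$ is normal). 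Moreover, $Y$ is integral: at the generic point, $K(Y) = K(X) \otimes_{K(A), V^{N,*}} K(A)$, and since $V^{N,*}$ is a separable field extension and separable extensions reflect non--$p$--th powers, tensoring against the purely inseparable extension $K(X)/K(A)$ still produces a field. With $Y$ integral and normal, the $h^d_{ss}$--computation of Step~3 applies verbatim, yielding $j = 1$, so $\cV' \cong \omega_A$ and $a_*\omega \sim_C \omega_A$.

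To upgrade the conclusion to birationality, I would argue that the stable $V$--module $\cN'$ associated to $a_*\omega$ has length $j = 1$ and is supported at $\{0\}$, so by the structure theory of $V$--modules at a point with $V$ étale there (as in \autoref{filtration_for_V_modules_supp_at_0} in the length--one case), $\cN' \cong k(0)$ with its canonical $V$--module structure. Under \autoref{equivalence_V_crystals_and_Cartier_crystals} this identifies the étale $\bF_p$--sheaf associated to $a_*\omega$ with the constant sheaf $\bF_p$ on $A$. Suppose for contradiction $a$ is not birational, i.e. purely inseparable of some degree $p^k > 1$; then $a$ factors a Frobenius on $A$, meaning there exist $N \geq 1$ and $g \colon A \to X$ with $a \circ g = F^N_A$. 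A direct $\FM_A$--computation applied to this factorization (analogous to the computation $\FM_A(F_*\omega_A) = \bigoplus_{q \in \ker V} k(q)$) would then show $V^0_{\injj}(a_*\omega) \supseteq \ker(V^N)$, which strictly contains $\{0\}$ because $A$ is ordinary and $V^N$ is étale of degree $p^{Ng}$. This contradicts our hypothesis, forcing $\deg(a) = 1$.

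The main obstacle will be making the final Fourier--Mukai computation rigorous: one needs to track the Cartier structure on $a_*\omega$ induced from $\omega \subseteq \omega_X$ through the Frobenius factorization of $a$, and verify that the resulting support of $\cH^0\FM_A(a_*\omega)_{\injj}$ indeed captures $\ker V^N$ non--trivially. An alternative route, should that prove too technical, is to argue directly from $a_*\omega \sim_C \omega_A$ by combining the crystal equivalence with the fact that $a$, being a universal homeomorphism, induces an equivalence of étale sites, so that the triviality of the étale $\bF_p$--sheaf of $\omega$ on $X$ together with $[K(X):K(A)]_{\sep} = 1$ and ordinarity of $A$ forces the inseparable degree to collapse as well.
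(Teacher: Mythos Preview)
Your proposal has a genuine gap at the decisive step. Re--running Steps~2 and~3 of Proposition~\ref{V0_zero_dim_implies_done} only recovers the crystal isomorphism $a_*\omega \sim_C \omega_A$, which is already contained in that proposition and makes no use of the stronger hypothesis $V^0_{\injj}(a_*\omega) = \{0\}$ (as opposed to merely zero--dimensional). The work lies entirely in upgrading this crystal statement to birationality, and neither of your two suggested routes accomplishes that.

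For the Frobenius--factorization route, the inclusion $V^0_{\injj}(a_*\omega) \supseteq \ker V^N$ does not follow from $a\circ g = F^N$. The point is that $V^0_{\injj}$ is \emph{not} a Cartier--crystal invariant: when $A$ is ordinary one has $F_*\omega_A \sim_C \omega_A$, yet the injective $V$--module $\cH^0\FM_A(F_*\omega_A) = V^*k(0)$ is supported on all of $\ker V$, so $V^0_{\injj}(F_*\omega_A) = \ker V \neq \{0\}$. Thus one cannot transport support information from $F^N_*\omega_A$ to $a_*\omega$ through crystal equivalences. In fact the factorization $a\circ g = F^N$ only produces a nil--isomorphism $F^N_*\omega_A \to a_*\omega_X$, which after $\cH^0\FM_A$ and injectivizing yields an \emph{inclusion} $(\cH^0\FM_A(a_*\omega_X))_{\injj} \hookrightarrow V^{N,*}k(0)$, hence $V^0_{\injj}(a_*\omega_X) \subseteq \ker V^N$ --- the opposite of what you want. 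Your alternative \'etale--site route suffers from the same defect: the Riemann--Hilbert correspondence and the equivalence of \'etale topoi under a universal homeomorphism only see crystal--level data, which cannot detect the inseparable degree of $a$.

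The paper's argument proceeds quite differently and uses both hypotheses in an essential way. Writing $\cN$ for the injective $V$--module of $a_*\omega$ and $\cN' \subseteq \cN$ for the stable part as in Step~2, ordinarity makes $V$ \'etale on $\bighat{A}$, so $\length_0(V^{es,*}\cN') = \length_0(\cN')$; since the nil--isomorphism $\cN' \hookrightarrow \cN$ forces $\cN \hookrightarrow V^{es,*}\cN$ to factor through $V^{es,*}\cN'$, and since $\cN$ is supported at $\{0\}$ by hypothesis, a length count gives $\cN' = \cN$ on the nose. This equality lets one apply $\FM_{\bighat A}$ to the composite $\FM_A(a_*\omega) \to \cH^0\FM_A(a_*\omega) \to \cN = \cN'$ and obtain an honest morphism of Cartier \emph{modules} $\omega_A \to a_*\omega$, not merely of crystals. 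Its cokernel is nilpotent, but the structural map of $a_*\omega$ is generically surjective (since $\omega$ agrees with $\omega_X$ on the regular locus and $a$ is generically finite), so the cokernel vanishes generically. Hence $a_*\omega$ has generic rank~$1$ and $a$ is birational. The idea you are missing is precisely this passage from a crystal isomorphism to an actual module map via $\cN' = \cN$, which is where both ordinarity and the hypothesis $V^0_{\injj}(a_*\omega)=\{0\}$ enter.
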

\begin{proof}
	By \autoref{V0_zero_dim_implies_done}, we already know that $a$ is surjective and purely inseparable. Let $\cN$ denote the injective $V$--module corresponding to $a_*\omega$, and let $\cN' \inc \cN$ be defined as in Step 2 of the proof of \autoref{V0_zero_dim_implies_done}.
	
	Assume that we already showed that $\cN' = \cN$. We then have a (non--zero) composition of morphisms \[ \begin{tikzcd}
		\FM_A(a_*\omega) \arrow[rr] &  & \cH^0\FM_A(a_*\omega) \arrow[rr] &  & \cN = \cN',
	\end{tikzcd} \] so applying $\FM_{\bighat{A}}$ and \autoref{properties_Fourier_Mukai_transform}.\autoref{itm:equiv_cat}, we obtain a non--zero morphism of Cartier \emph{modules} $\mu \colon \omega_A \to a_*\omega$ (recall that $\FM_{\bighat{A}}(\cN') \cong \omega_A$ by the proof of Step 3 above). Since $a_*\omega \sim_C \omega_A$ and this morphism is non--zero, it must be an isomorphism at the level of crystals. In particular, its cokernel is nilpotent. However, the structural morphism of $a_*\omega$ is generically surjective (since $X$ is generically regular and $a$ is generically finite), so this cokernel is generically trivial, i.e. $\omega_A \to a_*\omega$ is generically surjective. In particular, we deduce that $\rank(a_*\omega) = 1$, so $a$ is birational.

	Thus, we are left to show that $\cN = \cN'$. Since the inclusion $\cN' \inc \cN$ is a nil--isomorphism, there exists some $e > 0$ such that the map $\cN \inj V^{es, *}\cN$ factors through $V^{es, *}\cN'$. In particular, \[ \length_0(V^{es, *}\cN') \geq \length_0(\cN) \expl{=}{$\cN$ is suported at $0 \in \bighat{A}$ by assumption.} \length(\cN). \] Given that $V^{es}$ is étale by \autoref{basic facts about AV}.\autoref{itm:ordinary_iff_V_etale}, we see that $\length_0(V^{es, *}\cN') = \length_0(\cN')$, so we have proven that \[ \length(\cN') \geq  \length(\cN). \] Since $\cN' \inc \cN$, this shows that $\cN' = \cN$.
\end{proof}

\begin{rem}
	Under the assumptions of \autoref{V0_zero_dim_implies_done}, we cannot obtain anything better than pure inseparability of $a$. Indeed, any variety whose Albanese morphism is purely inseparable (but non--necessarily birational) satisfies the hypotheses of \emph{loc. cit.}, even when $A$ is ordinary! One then sees from \autoref{V0_zero_dim_implies_done_ordinary_case} that the key additional hypothesis in this case is the assumption $V^0_{\injj} = \{0\}$ (i.e. it should not contain any non--trivial $p$--torsion line bundles).
\end{rem}

As a corollary of \autoref{V0_zero_dim_implies_done}, we can now give a stronger version of \autoref{Pareschi_char_p}:

\begin{thm}\label{more_general_Pareschi}
	Let $X$ be a normal proper variety of maximal Albanese dimension, and assume that its Albanese variety has no supersingular factor. If $\dim(V^0_{\injj}(a_*\omega))) > 0$ for some Cartier module $\omega \inc \omega_X$, then there exists a positive--dimensional closed subset $Z \inc V^0_{\injj}(a_*\omega)$ such that also $-Z \inc V^0_{\injj}(a_*\omega)$.
\end{thm}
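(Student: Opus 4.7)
The plan is to reduce to \autoref{Pareschi_char_p} via Stein factorization, with the key tool being a monotonicity principle for the loci $V^0_{\injj}$. The first step is to establish: \emph{for any inclusion of Cartier modules $\cM_1 \inc \cM_2$ on $A$, one has $V^0_{\injj}(\cM_1) \inc V^0_{\injj}(\cM_2)$.} Applying $\FM_A$ to the short exact sequence $0 \to \cM_1 \to \cM_2 \to \cM_2/\cM_1 \to 0$ and using that $\cH^i\FM_A = 0$ for $i > 0$ (see \autoref{properties_Fourier_Mukai_transform}.\autoref{itm:support}), the associated long exact sequence produces a surjection of $V$--modules $\cH^0\FM_A(\cM_2) \surj \cH^0\FM_A(\cM_1)$. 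Since $(-)_{\injj}$ is defined as the image in $\colim V^{es,*}(-)$ (see \autoref{canonical_injective_V_module}) and $V^{es,*}$ is flat, this surjection descends to a surjection of injective $V$--modules $\cH^0\FM_A(\cM_2)_{\injj} \surj \cH^0\FM_A(\cM_1)_{\injj}$, and comparing their supports yields the claim.

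With the monotonicity lemma in hand, I would set up the Stein factorization $X \xrightarrow{f} Y \xrightarrow{b} A$. Here $b$ is finite (and $Y$ is normal), while $f$ is birational because $a$ is generically finite and $f$ has connected fibres. The Grothendieck trace map provides an injection $f_*\omega_X \inj \omega_Y$ of Cartier modules, being a generic isomorphism between torsion--free sheaves. Consequently, $f_*\omega \inc \omega_Y$ is a non--zero sub--Cartier module, its generic stalk coinciding with the unit Cartier structure of $\omega_X$ at the function field of $X$, so in particular non--nilpotent. By minimality of the test submodule, $\tau(\omega_Y) \inc f_*\omega$, and pushing forward yields $b_*\tau(\omega_Y) \inc a_*\omega$, whence $V^0_{\injj}(b_*\tau(\omega_Y)) \inc V^0_{\injj}(a_*\omega)$ by the monotonicity lemma.

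If $\dim V^0_{\injj}(b_*\tau(\omega_Y)) > 0$, then I would invoke \autoref{Pareschi_char_p} applied to $b \colon Y \to A$ and $\tau(\omega_Y)$; this produces a positive--dimensional $Z$ with $\pm Z \inc V^0_{\injj}(b_*\tau(\omega_Y)) \inc V^0_{\injj}(a_*\omega)$, concluding the proof. One point to verify carefully is that the proof of \emph{loc.\ cit.}\ only uses the injectivity of $b^{*} \colon \Pic^0(A) \to \Pic^0(Y)$ rather than any Albanese property of $Y$; this injectivity follows from that of $a^{*} = f^{*} \circ b^{*}$ provided by \autoref{existence_of_albanese}.

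It remains to rule out the case $\dim V^0_{\injj}(b_*\tau(\omega_Y)) = 0$. Here I would apply \autoref{V0_zero_dim_implies_done} to the finite morphism $b$ (its proof again going through verbatim thanks to the injectivity of $b^{*}$) to conclude that $b$ is surjective and purely inseparable, hence a universal homeomorphism. Then \autoref{universal_homeo_induce_equivalence_of_Cartier_crystals} yields $b_*\omega_Y \sim_C \omega_A$. Combining the inclusion $a_*\omega \inc b_*\omega_Y$ (from $f_*\omega \inc \omega_Y$ and the exactness of $b_*$) with the monotonicity lemma and the crystal--invariance of $V^0_{\injj}$, we obtain $V^0_{\injj}(a_*\omega) \inc V^0_{\injj}(\omega_A)$; and since $\cH^0\FM_A(\omega_A) = k(0)$ by \autoref{properties_Fourier_Mukai_transform}.\autoref{itm:equiv_cat_unipotent}, the latter locus is contained in $\{0\}$, contradicting positive--dimensionality. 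The main obstacle is to verify the monotonicity lemma cleanly, and in particular to ensure that passing to injective hulls commutes appropriately with surjections of $V$--modules.
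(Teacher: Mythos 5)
Your proposal is correct and takes essentially the same route as the paper: reduce to the finite case via Stein factorization, use minimality of the test module, and split on whether $V^0_{\injj}$ of the pushed-forward test module is positive-dimensional (where \autoref{Pareschi_char_p} applies, with your check that only injectivity of $b^*$ is needed being exactly the right point) or zero-dimensional (where \autoref{V0_zero_dim_implies_done} and \autoref{universal_homeo_induce_equivalence_of_Cartier_crystals} yield a contradiction). The only cosmetic differences are that you isolate the monotonicity of $V^0_{\injj}$ under inclusions as a lemma (the paper proves the same fact inline, by the same Fourier--Mukai surjection argument, in the proof of \autoref{overly_general_result}), and in the zero-dimensional case you conclude via $b_*\omega_Y \sim_C \omega_A$ and $V^0_{\injj}(\omega_A) \inc \{0\}$, whereas the paper notes instead that $\tau(\omega_X) \inc \omega$ becomes a nil--isomorphism, forcing $V^0_{\injj}(a_*\tau(\omega_X)) = V^0_{\injj}(a_*\omega)$.
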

\begin{proof}
	As in the proof of \autoref{more_general_Albanese_image}, we may assume that $a \colon X \to A$ is finite. Note that $\omega$ cannot be trivial, so $\tau(\omega_X) \inc \omega$ by definition. If we could show that $\dim(V^0_{\injj}(a_*\tau(\omega_X))) > 0$, then we would be done by \autoref{Pareschi_char_p}. Hence, assume by contradiction that $\dim(V^0_{\injj}(a_*\tau(\omega_X))) = 0$. Then we know by \autoref{V0_zero_dim_implies_done} that $a \colon X \to A$ is purely inseparable. Since $\omega_A = \tau(\omega_A)$ is simple, we deduce by \autoref{universal_homeo_induce_equivalence_of_Cartier_crystals} that the inclusion $\tau(\omega_X) \to \omega_X$ is a nil--isomorphism. In particular, the inclusion $\tau(\omega_X) \inc \omega$ is a nil--isomorphism, forcing the equality $V^0_{\injj}(a_*\tau(\omega_X)) = V^0_{\injj}(a_*\omega)$. This is a contradiction.
\end{proof}

\subsection{Proof of the main theorems}

Here is the main general result from which we will deduce everything else:

\begin{prop}\label{overly_general_result}
	Assume that there exists a Cartier module $\cM$ on $X$ such that:
	\begin{itemize}
		\item $0 \in V^0_{\injj}(a_*\cM)$; 
		\item there exists no positive--dimensional closed subset $Z \inc V^0_{\injj}(a_*\cM)$ such that $-Z \inc V^0_{\injj}(a_*\cM)$.
		\item $A$ has no supersingular factor.
	\end{itemize}
	Then $a$ is surjective with connected fibers. Furthermore, $A$ is ordinary if and only if $H^0_{ss}(X, \cM) \neq 0$.
\end{prop}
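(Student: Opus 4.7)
The plan is to study the $V$--module $\cN := \cH^0 \FM_A(a_*\cM)$ on $\bighat{A}$ and extract information from its behaviour at $0$. First, I would reduce to the case where $a$ is finite via the Stein factorization $X \xrightarrow{f} Y \xrightarrow{b} A$: since $f_*\cO_X = \cO_Y$ we have $H^0(Y,\cO_Y) = k$, and a short diagram chase with the Albanese universal property identifies $b$ with $\alb_Y$, so $\Alb(Y) = A$. Setting $\cM' := f_*\cM$, one has $a_*\cM = b_*\cM'$ (hence identical $V^0_{\injj}$) and $H^0_{ss}(X,\cM) = H^0_{ss}(Y,\cM')$ (since $F^{es}_*$ commutes with $f_*$). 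As $f$ already has connected fibres, both surjectivity and connectedness of fibres of $a$ reduce to the same statements for $b$, so we may assume $a$ is finite.

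Next I would show that $V^0_{\injj}(a_*\cM)$ is zero--dimensional. If not, by \autoref{generic vanishing}.\autoref{itm:gv_p_closed} (and the no--supersingular hypothesis) it contains a component of the form $\beta + \bighat{B}$ with $\bighat{B} \subseteq \bighat{A}$ a positive--dimensional abelian subvariety and $\beta$ prime--to--$p$ torsion. I would then replay the chain of Lemmas \autoref{fibered_ab_vars} through \autoref{final_bit_of_the_proof_of_Pareschi} with $\cM$ in place of $\tau(\omega_X)$: dualize to a fibration $\pi \colon A \twoheadrightarrow B$ of relative dimension $j > 0$, twist by $\beta^{\pm 1}$, analyze the non--vanishing of $R^j\pi_*(a_*\cM \otimes \beta^{\pm 1})$, and transport the result to a positive--dimensional $Z$ with $\pm Z \subseteq V^0_{\injj}(a_*\cM)$ via \autoref{very_gen_pts_in_V0_implies_in_V0}, contradicting the second hypothesis.

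With $V^0_{\injj}(a_*\cM)$ now a finite set containing $0$, I would follow Step~2 of the proof of \autoref{V0_zero_dim_implies_done}: the stable image $\cN' \subseteq \cN$ of iterated adjoint structural maps $V_*\cN \to \cN$ is $V$--crystal--equivalent to $\cN$ and satisfies $V_*\cN' \xrightarrow{\sim} \cN'$, forcing $V(\Supp(\cN')) = \Supp(\cN')$. Combined with Step~1 of loc.~cit.\ (every element of $\Supp(\cN)$ is $p$--power torsion, eventually killed by $V$), this pins $\Supp(\cN') = \{0\}$. Since $0 \in V^0_{\injj}$, $\cN' \neq 0$, so $\cV' := \FM_{\bighat{A}}(\cN')$ is a non--trivial unit unipotent vector bundle with $a_*\cM \sim_C \cV'$; hence $\Supp_{\crys}(a_*\cM) = A$, giving surjectivity. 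For connected fibres, the Verschiebung base--change argument of Step~3 of \autoref{V0_zero_dim_implies_done}, combined with \autoref{rank_of_pushforward}, should pin $[K(X):K(A)]_{\mathrm{sep}} = 1$, making $a$ purely inseparable and hence a universal homeomorphism.

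Finally, for the ordinarity equivalence, \autoref{equivalence_V_crystals_and_Cartier_crystals} at $\alpha = 0$, $i = 0$ gives
\[ H^0_{ss}(X,\cM) \;\cong\; \bigl( \colim V^{es,*}\cN \otimes k(0) \bigr)^{\vee}, \]
and as nilpotent sub--$V$--modules vanish in the colimit, the right--hand side reduces to $\colim V^{es,*}\cN' \otimes k(0)$ with $\cN'$ non--nilpotent and supported at $\{0\}$. The case $j = 0$ of \autoref{basic facts about AV_not_so_basic} then yields the equivalence with $r(A) = g$, i.e.\ with $A$ ordinary. The hard part will be the zero--dimensionality step: the Pareschi argument uses $\tau(\omega_X) \subseteq \omega_X$ and Serre duality on the generic fibre in an essential way (most acutely in \autoref{twist_by_beta_inverse_also_good}'s swap $\beta \leftrightarrow \beta^{-1}$), and extending it to a general Cartier module $\cM$ will require engineering a substitute from the hypothesis $0 \in V^0_{\injj}(a_*\cM)$ itself.
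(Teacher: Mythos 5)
Your overall skeleton (reduce along the Stein factorization, analyze the $V$--module at $0$, build a unit unipotent bundle, Verschiebung base change, rank count, ordinarity via $\Tor$ at $0$) resembles the paper's, but the load--bearing step of your plan is a genuine gap: the hypotheses do \emph{not} imply that $V^0_{\injj}(a_*\cM)$ is zero--dimensional, and no replay of \autoref{fibered_ab_vars}--\autoref{final_bit_of_the_proof_of_Pareschi} can prove it, because the second hypothesis only excludes \emph{symmetric} positive--dimensional subsets. Concretely, $V^0_{\injj}(a_*\cM) = \{0\} \cup (\beta + \bighat{B})$, with $\bighat{B} \inc \bighat{A}$ a positive--dimensional abelian subvariety and $\beta$ prime--to--$p$ torsion with $2\beta \notin \bighat{B}$, is compatible with all three hypotheses (a positive--dimensional closed $Z$ with $-Z \inc V^0_{\injj}$ would have an irreducible component inside $(\beta + \bighat{B}) \cap (-\beta + \bighat{B}) = \emptyset$), and such $\cM$ exist: take, say, $\omega_X \oplus (a^*\pi^*\omega_B \otimes a^*\beta)$ with $\pi \colon A \to B$ the fibration dual to $\bighat{B}$. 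Moreover, every duality--based step you propose to replay is specific to sub--Cartier modules of $\omega_X$: the $p$--power--torsion claim in Step 1 of \autoref{V0_zero_dim_implies_done}, the swap $\beta \leftrightarrow \beta^{-1}$ in \autoref{twist_by_beta_inverse_also_good}, and the count $h^d_{ss}(Y, \omega') = 1$ in Step 3 all invoke Serre duality for $\omega \inc \omega_X$. Already $\cM = \omega_X^{\oplus 2}$ satisfies the hypotheses of the proposition while $\rank_{\crys}(a_*\cM) = 2$, so your concluding claim that Step 3 plus \autoref{rank_of_pushforward} "should pin $[K(X):K(A)]_{\sep} = 1$" fails as stated. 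You honestly flagged that a substitute is needed, but the substitute is precisely the content of the paper's proof and is absent from your proposal.

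The paper's route avoids finiteness of $V^0_{\injj}(a_*\cM)$ altogether. The hypotheses give only that $0$ is an \emph{isolated point} of $V^0_{\injj}(a_*\cM)$ (by \autoref{generic vanishing}.\autoref{itm:gv_p_closed}, a positive--dimensional component through $0$ would be an abelian subvariety, hence symmetric, contradicting the second hypothesis). From isolation, \autoref{first_step_general_proof} produces, via \autoref{filtration_for_V_modules_supp_at_0} and simplicity of the unit Cartier module $\omega_A$, a \emph{surjection of Cartier modules} $a_*\cM \surj \omega_A$; this already yields surjectivity of $a$, and also the ordinarity equivalence (your last paragraph is essentially correct once isolation of $0$ replaces global finiteness, since then $\colim \Tor_0(V^{es,*}\cN, k(0)) \cong \colim \Tor_0(V^{es,*}i_0^{\flat}\cN, k(0))$ and \autoref{basic facts about AV_not_so_basic} applies). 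The key idea you are missing is the transfer to a dualizing module: adjunction along the finite part $g$ of the Stein factorization $X \xrightarrow{f} S \xrightarrow{g} A$ turns the surjection into a non--zero map $f_*\cM \to g^{\flat}\omega_A \cong \omega_S$, whose image $\omega$ \emph{is} a sub--Cartier module of $\omega_S$; the exact sequence $0 \to g_*\cK \to a_*\cM \to g_*\omega \to 0$ together with \autoref{equivalence_V_crystals_and_Cartier_crystals} gives $V^0_{\injj}(g_*\omega) \inc V^0_{\injj}(a_*\cM)$, so the asymmetry hypothesis is inherited, and now \autoref{more_general_Pareschi} and \autoref{V0_zero_dim_implies_done} apply legitimately --- to $\omega \inc \omega_S$, where Serre duality is available --- yielding that $g$ is purely inseparable and hence that $a$ has connected fibers.
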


\begin{rem}
	Unlike in characteristic zero, we cannot conclude automatically that $a_*\cO_X = \cO_A$ (i.e. that $a$ is a fibration). What we prove here is that the finite part of the Stein factorization of $a$ is purely inseparable. Of course, if $a$ is separable to begin with, then it must be a fibration.
\end{rem}

The first step will be to prove the following:

\begin{lem}\label{first_step_general_proof}
	Assume that there exists a Cartier module $\cM$ on $X$ such that $0 \in V^0_{\injj}(a_*\cM)$ is an isolated point. Then $a$ is surjective and there exists a surjective morphism $a_*\cM \to \omega_A$ of Cartier modules.
	
	Furthermore $H^0_{ss}(X, \cM) \neq 0$ if and only if $A$ is ordinary.
\end{lem}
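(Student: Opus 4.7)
Set $\cN \coloneqq \cH^0 \FM_A(a_*\cM)_{\injj}$, so that $V^0_{\injj}(a_*\cM) = -\Supp(\cN)$ and the hypothesis translates to $\{0\}$ being an isolated point of $\Supp(\cN)$. The plan is to (i) deduce surjectivity of $a$ from non-vanishing of $H^g$, (ii) isolate a $V$-submodule $\cN_0 \subseteq \cN$ concentrated at $0$ which yields the desired surjection to $\omega_A$, and (iii) extract the ordinarity dichotomy via \autoref{basic facts about AV_not_so_basic}.

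For (i), since $0$ is isolated in $\Supp(\cN)$ the ideal $\fm_0$ is an associated prime of $\cN$ (any associated prime contained in $\fm_0$ would cut out an irreducible closed subset of positive dimension meeting both $\{0\}$ and $Z$ below), so there is an injection of coherent sheaves $k(0) \inj \cN$. Combined with $\Tor_g^{\cO_{\bighat{A}, 0}}(k(0), k(0)) \cong k$ and the long exact Tor sequence, this yields $0 \in V^g_{\injj}(a_*\cM)$. As $\{0\}$ is an isolated, hence codimension $g$, component of $S^g(a_*\cM)$, \autoref{generic vanishing}.\autoref{van_and_non_van} gives $0 \in W^g_F$, so $H^g(A, F^{es}_*a_*\cM) \neq 0$ for $e \gg 0$, forcing $\dim a(X) = g$ and $a(X) = A$.

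For (ii), write $\Supp(\cN) = \{0\} \sqcup Z$ with $Z$ closed and not containing $0$, and split $\cN = \cN_0 \oplus \cN_Z$ as coherent sheaves by disjoint support. The key observation is that $\cN_0$ is a $V$-submodule: since $V(0) = 0 \notin Z$, the stalk $(V^{s,*}\cN_Z)_0$ vanishes, so $(V^{s,*}\cN)_0 = (V^{s,*}\cN_0)_0$, and the image of $\cN_0$ under the structural map $\theta \colon \cN \to V^{s,*}\cN$, being supported at $\{0\}$, must land in $V^{s,*}\cN_0$. Being a submodule of the injective $\cN$, the $V$-module $\cN_0$ is itself injective and nonzero, hence non-nilpotent. \autoref{filtration_for_V_modules_supp_at_0} then forces some subquotient in its filtration to be $k(0)$ with canonical $V$-structure; letting $\cN_0^{(j)}$ be the first filtration step reaching such a layer (so $\cN_0^{(j-1)}$ is nilpotent) produces a $V$-submodule with $\cN_0^{(j)} \sim_V k(0)$, and hence a nonzero crystal morphism $k(0) \to \cN$, equivalently via \autoref{equivalence_V_crystals_and_Cartier_crystals} a nonzero class in $\Hom_{\Crys^{C^s}_A}(a_*\cM, \omega_A)$.

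Promoting this to an actual Cartier module surjection is the main obstacle. Unwinding the Serre quotient, and using that $\omega_A$ is unit and therefore has no nonzero nilpotent Cartier submodule, one identifies this Hom with $\Hom_{\Coh^{C^s}_A}(\cM_\infty, \omega_A)$, where $\cM_\infty \coloneqq \theta^e(F^{es}_*a_*\cM)$ is the stable image for $e \gg 0$. For a nonzero such $\alpha$, the formula $\tilde{\alpha}(m) \coloneqq \alpha(\theta^e(F^{es}_*m))$ defines a Cartier module map $a_*\cM \to \omega_A$ (using the iterated relation $\theta^{e+1} = \theta \circ F^s_*\theta^e$), nonzero because $\theta^e$ surjects $F^{es}_*a_*\cM$ onto $\cM_\infty$; its image $I \subseteq \omega_A$ is a Cartier submodule with nilpotent quotient, but the iso $\theta_{\omega_A}^N$ for $N \gg 0$ would then factor through $I$, forcing $I = \omega_A$ and $\tilde\alpha$ surjective. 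Finally for (iii), the second part of \autoref{equivalence_V_crystals_and_Cartier_crystals} identifies $H^0_{ss}(X, \cM) \cong (\colim \Tor_0(V^{es,*}\cN, k(0)))^{\vee}$; the stalk vanishing argument reduces the colimit to $\colim \Tor_0(V^{es,*}\cN_0, k(0))$, and \autoref{basic facts about AV_not_so_basic} applied with $j = 0$ to the non-nilpotent $V$-module $\cN_0$ supported at $\{0\}$ shows this is nonzero if and only if $r(A) = g$, i.e., if and only if $A$ is ordinary.
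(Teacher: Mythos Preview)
Your overall strategy parallels the paper's: work with the injective $V$--module $\cN$, produce a nonzero crystal morphism $k(0) \dashrightarrow \cN$ via \autoref{filtration_for_V_modules_supp_at_0}, translate through \autoref{equivalence_V_crystals_and_Cartier_crystals} to a crystal morphism $a_*\cM \dashrightarrow \omega_A$, promote it to an honest Cartier module surjection, and then read off the ordinarity dichotomy from \autoref{basic facts about AV_not_so_basic}. Two cosmetic differences: you work with the direct summand $\cN_0$ where the paper uses $i_{0,*}i_0^{\flat}(\cN)$ (either is fine), and you prove surjectivity of $a$ up front via a separate $H^g$--argument, whereas the paper gets it as an immediate byproduct of the surjection $a_*\cM \twoheadrightarrow \omega_A$ (the target being supported on all of $A$).

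There is, however, a genuine gap in your promotion step. The formula $\tilde\alpha(m) \coloneqq \alpha(\theta^e(F^{es}_*m))$ does \emph{not} define an $\cO_A$--linear map $a_*\cM \to \omega_A$: the assignment $m \mapsto F^{es}_*m$ is the identity on underlying sets, but the $\cO_A$--action on $F^{es}_*a_*\cM$ is twisted by $p^{es}$--th powers, so your $\tilde\alpha$ is only $p^{es}$--semilinear. What your construction honestly yields is a Cartier module map $\alpha \circ \theta^e \colon F^{es}_*a_*\cM \to \omega_A$, not one out of $a_*\cM$ itself; descending along $F^{es}_*$ is exactly the nontrivial point. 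The paper sidesteps this entirely by invoking \cite[Corollary 3.4.15]{Baudin_Duality_between_perverse_sheaves_and_Cartier_crystals}, which says that any crystal morphism into a \emph{unit} Cartier module is represented by an actual module morphism. Two smaller remarks: your phrase ``the iso $\theta_{\omega_A}^N$'' is off --- $\theta_{\omega_A}^N \colon F^{Ns}_*\omega_A \to \omega_A$ is only a surjection (the source has rank $p^{Ng}$), though surjectivity is all your argument needs; and the paper's route to surjectivity of the map into $\omega_A$ is shorter: since $\omega_A = \tau(\omega_A)$ is a \emph{simple} Cartier module (\autoref{remark_test_module}.\autoref{rem_test}), any nonzero Cartier module map into it is automatically surjective.
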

\begin{proof}
	Let $\cN$ denote the injective $V$--module associated to $a_*\cM$, whence $V^0_{\injj}(a_*\cM) = -\Supp(\cN)$. Since the $V$--module $i_0^{\flat}(\cN)$ is not nilpotent by definition, there exists by \autoref{filtration_for_V_modules_supp_at_0} a non--zero morphism of $V$--crystals $k(0) \dashrightarrow i_0^{\flat}(\cN) \inc \cN$. By \autoref{equivalence_V_crystals_and_Cartier_crystals}, we obtain a non--zero morphism $a_*\cM \dashrightarrow \omega_A$ of Cartier crystals. Since $\omega_A$ is unit, we know by \cite[Corollary 3.4.15]{Baudin_Duality_between_perverse_sheaves_and_Cartier_crystals} that we can represent this morphism of crystals by an actual morphism of Cartier modules. Since $\omega_A$ is a simple Cartier module (\autoref{remark_test_module}.\autoref{rem_test}), this morphism is automatically surjective. In particular, $a_*\cM$ must be supported everywhere, so $a$ is surjective.
	
	Let us show the part after ``Furthermore''. Given that \[ \colim \Tor_0(V^{es, *}\cN, k(0)) \cong \colim \Tor_0(V^{es, *}i_0^{\flat}(\cN), k(0)) \] by assumption, the result follows from \autoref{basic facts about AV_not_so_basic} and the last statement of \autoref{equivalence_V_crystals_and_Cartier_crystals}.
\end{proof}

\begin{proof}[Proof of \autoref{overly_general_result}]
	Since $A$ has no supersingular factor, it follows from the second assumption and \autoref{generic vanishing}.\autoref{itm:gv_p_closed} that $0 \in V^0_{\injj}(a_*\cM)$ is an isolated point. We therefore know by \autoref{first_step_general_proof} that $a$ is surjective and that there is a surjective morphism $a_*\cM \to \omega_A$ of Cartier modules. Let \[ \begin{tikzcd}
		X \arrow[rr, "f"] \arrow[rrrr, "a"', bend right] &  & S \arrow[rr, "g"] &  & A
	\end{tikzcd} \]
	denote the Stein factorization of $a$. By adjunction, the morphism $g_*f_*\cM = a_*\cM \to \omega_A$ gives rise to non--zero morphism $f_*\cM \to g^{\flat}\omega_A \cong \omega_S$ of Cartier modules. Let $\omega$ denote its image, and let $\cK$ denote its kernel. Since $g$ is finite, we obtain a short exact sequence \[ 0 \to g_*\cK \to a_*\cM \to g_*\omega \to 0 \] of Cartier modules on $A$. By \autoref{properties_Fourier_Mukai_transform}.\autoref{itm:support}, applying the Fourier--Mukai transform gives us an exact sequence of $V$--modules \[ \dots \to \cH^0(\FM_A(g_*\omega)) \to \cH^0(\FM_A(a_*\cM)) \to \cH^0(\FM_A(g_*\cK)) \to 0. \] By \autoref{equivalence_V_crystals_and_Cartier_crystals}, the kernel of the first map is a nilpotent $V$--module, so we obtain an injection \[ \cH^0(\FM_A(g_*\omega))_{\injj} \inj \cH^0(\FM_A(a_*\cM)_{\injj} \] (we use the notations from \autoref{canonical_injective_V_module}). This means by definition that $V^0_{\injj}(g_*\omega) \inc V^0_{\injj}(a_*\cM)$. By the second assumption and \autoref{more_general_Pareschi}, we deduce that $V^0_{\injj}(g_*\omega)$ is finite. We therefore conclude by \autoref{V0_zero_dim_implies_done} that $g$ is purely inseparable, so $a$ has connected fibers.
	
	The statement after ``Furthermore'' follows from that of \autoref{first_step_general_proof}.
\end{proof}

We can finally deduce our main theorems:

\begin{thm}\label{main_thm_wo_ss_factor}
	Let $X$ be a normal proper variety of maximal Albanese dimension with $P_2(X) = 1$. Assume that either of the following holds:
	\begin{itemize}
		\item $S^0(X, \omega_X) \neq 0$, or
		\item $\Alb(X)$ has no supersingular factor.
	\end{itemize} 
	Then the Albanese morphism $\alb_X \colon X \to \Alb(X)$ is surjective and purely inseparable. In the first case, $\Alb(X)$ is also ordinary.
	
	If in addition $P_p(X) \leq p - 1$, then $\alb_X$ is birational.
\end{thm}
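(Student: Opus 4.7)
The plan is to combine a multiplication-of-sections argument coming from $P_2(X) = 1$ with the structural results \autoref{V0_zero_dim_implies_done} and \autoref{overly_general_result} established above.

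First, I would establish the following Pareschi-type anti-symmetry from $P_2(X) = 1$: there is no positive-dimensional closed subset $Z \subseteq V^0_{\injj}(a_*\omega_X)$ with $-Z \subseteq V^0_{\injj}(a_*\omega_X)$. Indeed, suppose such $Z$ exists; since $V^0_{\injj} \subseteq V^0$ by \autoref{generic vanishing}.\autoref{itm:gv_link_with_section}, for every $\alpha \in Z$ we may pick non-zero sections $s_\alpha \in H^0(X, \omega_X \otimes a^*\alpha)$ and $t_\alpha \in H^0(X, \omega_X \otimes a^*\alpha^{-1})$. As $X$ is integral, their product is a non-zero element of the one-dimensional space $H^0(X, \omega_X^{\otimes 2})$; hence $(s_\alpha)_0$ is bounded above by the unique effective divisor $D \in |2K_X|$ and takes only finitely many values among the sub-divisors of $D$. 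Since $a^*\alpha \cong \cO_X((s_\alpha)_0) \otimes \omega_X^{-1}$ is determined by $(s_\alpha)_0$, and $a^*$ is injective on $\Pic^0$ by \autoref{existence_of_albanese}, $\alpha$ itself takes only finitely many values, contradicting $\dim Z > 0$.

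When $\Alb(X)$ has no supersingular factor, combining this with \autoref{more_general_Pareschi} (applied to $\omega_X$ itself) forces $V^0_{\injj}(a_*\omega_X)$ to be zero-dimensional; then \autoref{V0_zero_dim_implies_done} delivers that $a$ is surjective and the finite part of its Stein factorization is purely inseparable. When instead $S^0(X, \omega_X) \neq 0$, the trace-induced Cartier structure on $\omega_X$ makes $H^0_{ss}(X, \omega_X)$ non-zero, which via the last formula of \autoref{equivalence_V_crystals_and_Cartier_crystals} is equivalent to $0 \in V^0_{\injj}(a_*\omega_X)$. I would then invoke \autoref{overly_general_result} with $\cM = \omega_X$: its first two hypotheses are now in hand, and the third (``no supersingular factor'') must be established separately, for instance by arguing that a supersingular quotient $\Alb(X) \twoheadrightarrow E$ would, through the composition $X \to \Alb(X) \to E$ and a generic-fiber descent, force a Frobenius-stable section on $E$, contradicting the nilpotence of top Frobenius cohomology on a supersingular elliptic curve. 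Granted this, \autoref{overly_general_result} yields that $a$ is surjective with connected fibers and $\Alb(X)$ is ordinary.

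Finally, under the extra hypothesis $P_p(X) \leq p - 1$, \autoref{separability} (first bullet, with $\Delta = 0$) gives separability of $a$; combined with pure inseparability, $a$ must be birational. The main obstacle is the subclaim ``$S^0(X, \omega_X) \neq 0$ implies $\Alb(X)$ has no supersingular factor'', since the Pareschi toolkit of this paper is developed only under that hypothesis; the argument must exploit the specifically ordinary-like character of $S^0 \neq 0$, presumably through the non-nilpotent behavior of $\cH^0\FM_A(a_*\omega_X)$ at the origin forced by \autoref{basic facts about AV_not_so_basic}.
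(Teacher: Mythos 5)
Your overall route is the paper's: the same multiplication-of-sections argument extracts the anti-symmetry of $V^0_{\injj}(a_*\omega_X)$ from $P_2(X) = 1$ and injectivity of $a^*$ (\autoref{existence_of_albanese}), the structural conclusions come from \autoref{overly_general_result} (or, in the no-supersingular-factor case, from \autoref{more_general_Pareschi} plus \autoref{V0_zero_dim_implies_done}, which is exactly the content of \autoref{overly_general_result} and in fact lets you bypass the citation the paper uses to get $0 \in V^0_{\injj}(a_*\omega_X)$ in that case), and birationality under $P_p(X) \leq p-1$ follows from \autoref{separability} exactly as in the paper. All of this part is correct.

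The genuine gap is the one you flag yourself: the subclaim that $S^0(X, \omega_X) \neq 0$ together with $P_2(X) = 1$ excludes supersingular factors of $\Alb(X)$, and your proposed mechanism would not work as written. Pushing forward along $X \to \Alb(X) \twoheadrightarrow E$ and producing a ``Frobenius-stable section on $E$'' yields no contradiction: a supersingular elliptic curve carries many Cartier modules with non-zero stable sections (skyscrapers, for instance), so non-nilpotence of an $H^0_{ss}$ of a pushforward says nothing about the $p$-rank of $E$. Moreover your sketch never invokes $P_2(X) = 1$, which is essential at precisely this point. The paper argues on the dual side: if $i_E \colon E \inj \bighat{\Alb(X)}$ is a supersingular elliptic curve and $\cN$ is the $V$--module of $a_*\omega_X$, then since $E = -E$, your own anti-symmetry step already shows $E \not\inc \Supp(\cN)$; hence $0$ is isolated in $E \cap \Supp(\cN)$ and $i_0^{\flat}(i_E^*\cN)$ is a $V$--module on $E$ supported at $\{0\}$. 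The non-nilpotence $\colim V^{es,*}\cN \otimes k(0) \neq 0$ supplied by $S^0(X, \omega_X) \neq 0$ via the last formula of \autoref{equivalence_V_crystals_and_Cartier_crystals} then realizes hypothesis \autoref{basic_facts_AV_itm:there_exists} of \autoref{basic facts about AV_not_so_basic} with $j = 0$ on $E$, forcing $r(E) \geq 1$, i.e. $E$ ordinary --- a contradiction; ordinarity of $\Alb(X)$ itself then comes from \autoref{first_step_general_proof}, as you say. So your instinct to route the argument through the non-nilpotence at the origin and \autoref{basic facts about AV_not_so_basic} is correct, but the missing idea is the restriction of $\cN$ to the supersingular curve inside the \emph{dual} abelian variety, with $P_2(X) = 1$ (applied to the symmetric subset $E = -E$) guaranteeing that the restricted module is supported at the origin.
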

\begin{proof}
	We want to apply \autoref{overly_general_result} with $\cM = \omega_X$. First, note that the second assumption of \emph{loc. cit.} is satisfied since $P_2(X) = 1$ by hypothesis. Indeed, otherwise there would exist infinitely many $\beta \in \bighat{A}$ such that both $H^0(X, \omega_X \otimes a^*\beta) \neq 0$ and $H^0(X, \omega_X \otimes a^*\beta^{-1}) \neq 0$. Multiplying these sections together and using that $a^*$ is injective (\autoref{existence_of_albanese}) would then give that $P_2(X) > 1$.

	Let us now show that, also in the first case, $\Alb(X)$ has no supersingular factor. By contradiction, suppose that there exists a supersingular elliptic curve $i_E \colon E \inj \bighat{\Alb(X)}$, and let $\cN$ be the $V$--module associated to $\alb_{X, *}(\omega_X)$. Note that $E \not\inc \Supp(\cN)$ by the same argument as in the previous paragraph. Since \[ \colim V^{es, *}\cN \otimes k(0) \neq 0 \] by the last statement of \autoref{equivalence_V_crystals_and_Cartier_crystals}, we obtain that the $V$--module $i_0^{\flat}(i_E^*\cN)$ on $E$ satisfies hypothesis \autoref{basic_facts_AV_itm:there_exists} of \autoref{basic facts about AV_not_so_basic}. By \emph{loc. cit.}, $E$ is actually ordinary, giving a contradiction.  We have shown that $\Alb(X)$ has no supersingular factor, so it is in fact ordinary by \autoref{first_step_general_proof}.
	
	Finally, we know in both cases that $0 \in V^0_{\injj}(\alb_{X, *}(\omega_X))$ by the proof of \cite[Theorem 4.3]{Baudin_Positive_characteristic_generic_vanishing_theory}. The birationality statement follows from \autoref{separability}.
\end{proof}

\begin{thm}\label{main_thm_Delta_integral}
	Let $X$ be a normal proper variety, and let $\Delta \geq 0$ be a $\bZ$--divisor on $X$. Assume that $H^0_{ss}(X, \cO_X(K_X + \Delta)) \neq 0$ and that $P_2(X, \Delta) = 1$. Then $\alb_X$ is surjective with connected fibers and $\Alb(X)$ is ordinary. If in addition $P_p(X, \Delta) \leq p - 1$, then $a$ is a separable fibration.
\end{thm}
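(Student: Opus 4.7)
The plan is to apply the general criterion \autoref{overly_general_result} to the Cartier module $\cM \coloneqq \cO_X(K_X + \Delta)$ endowed with the natural structure from \autoref{Cartier_structure_on_pluricanonical_bundles} (taking $\delta = r = m = 1$; the exponent $(m-\delta)(p^s-1)/r$ vanishes, so no auxiliary section is needed and the structural map is simply the twisted trace $\Tr^s_\Delta$). Verifying the three hypotheses of \emph{loc.~cit.} will yield that $a$ is surjective with connected fibers, and since $H^0_{ss}(X,\cM) \neq 0$ is part of our assumption, the final clause of \emph{loc.~cit.} will deliver ordinarity of $\Alb(X)$.

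The containment $0 \in V^0_{\injj}(a_*\cM)$ is immediate: the hypothesis $H^0_{ss}(X, \cM) \neq 0$ places $0$ in $W^0_F(a_*\cM)$, and then \autoref{generic vanishing}.\autoref{van_and_non_van} combined with the invariance $p^{es} \cdot 0 = 0$ gives $0 \in V^0_{\injj}(a_*\cM)$. Next I rule out the existence of a positive-dimensional $Z \subseteq V^0_{\injj}(a_*\cM)$ with $-Z \subseteq V^0_{\injj}(a_*\cM)$: by \autoref{generic vanishing}.\autoref{itm:gv_link_with_section}, each such $\alpha \in Z$ would admit non-zero sections $s_\alpha \in H^0(X, \cM \otimes a^*\alpha)$ and $s_{-\alpha} \in H^0(X, \cM \otimes a^*(-\alpha))$; their product $s_\alpha \cdot s_{-\alpha}$ lies in the one-dimensional space $H^0(X, \cO_X(2(K_X+\Delta)))$ (this is where $P_2(X, \Delta) = 1$ enters), and therefore defines the unique effective divisor $E_0 \in |2(K_X + \Delta)|$, so $(s_\alpha)$ must be an effective subdivisor of $E_0$. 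Only finitely many such subdivisors exist, and by the injectivity of $a^* \colon \Pic^0(\Alb(X)) \to \Pic^0(X)$ from \autoref{existence_of_albanese} each determines $\alpha$ uniquely---contradicting $\dim Z > 0$.

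For the third hypothesis I mimic the supersingular exclusion step from the proof of \autoref{main_thm_wo_ss_factor}: if $i_E \colon E \hookrightarrow \bighat{A}$ were a supersingular elliptic subcurve and $\cN$ denotes the injective $V$--module associated to $a_*\cM$, then applying the previous paragraph to $Z = E = -E$ forces $E \not\subseteq \Supp(\cN)$, while $H^0_{ss}(X,\cM) \neq 0$ yields $\colim V^{es,*}\cN \otimes k(0) \neq 0$ via the last statement of \autoref{equivalence_V_crystals_and_Cartier_crystals}. Restricting along $i_E$ and truncating via $i_0^\flat$, one checks that $i_0^\flat(i_E^*\cN)$ satisfies hypothesis \autoref{basic_facts_AV_itm:there_exists} of \autoref{basic facts about AV_not_so_basic} on $E$, forcing $E$ to be ordinary---a contradiction. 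With all three hypotheses verified, \autoref{overly_general_result} gives the first assertion.

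Finally, assuming additionally $P_p(X,\Delta) \leq p - 1$, the hypothesis $h^0_{ss}(X, \cO_X(K_X+\Delta)) \neq 0$ places us in case (2) of \autoref{separability} with $m = 1$, so $a$ is separable; combined with the connected fibers from the first part and the remark following \autoref{overly_general_result}, this promotes $a$ to a separable fibration. The most delicate step in the plan is the supersingular exclusion, as it requires careful tracking of how $\cN$ restricts to the abelian subvariety $E \subseteq \bighat{A}$ and how the $\colim/\Tor$ nonvanishing transfers there; but all the necessary compatibilities (Verschiebung of $E$ as the restriction of that of $\bighat{A}$, and the dimension criterion of \autoref{basic facts about AV_not_so_basic}) are already established in the preliminaries.
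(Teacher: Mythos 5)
Your proposal is correct and takes essentially the same route as the paper: the paper's proof likewise applies \autoref{overly_general_result} to $\cM = \cO_X(K_X + \Delta)$, gets $0 \in V^0_{\injj}(\alb_{X,*}\cM)$ from $0 \in W^0_F$ via \autoref{generic vanishing}.\autoref{van_and_non_van}, handles the second and third hypotheses ``by the exact same argument as in the proof of \autoref{main_thm_wo_ss_factor}'' (the $P_2$ product argument and the supersingular exclusion, which you simply spell out instead of citing), and concludes via \autoref{separability} that the finite part of the Stein factorization is both separable and purely inseparable, hence an isomorphism. The differences are purely expository.
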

\begin{proof}
	To prove the first statement, we want to verify the hypotheses of \autoref{overly_general_result} with $\cM = \cO_X(K_X + \Delta)$. The second and third points hold by the exact same argument as in the proof of \autoref{main_thm_wo_ss_factor}. Since $0 \in W^0_F(\alb_{X, *}(\cO_X(K_X + \Delta)))$ by hypothesis, we deduce by \autoref{generic vanishing}.\autoref{van_and_non_van} that $0 \in V^0_{\injj}(\alb_{X, *}(\cO_X(K_X + \Delta)))$, so we can indeed apply \autoref{overly_general_result} and deduce that $\alb_X$ has connected fibers. If also $P_p(X, \Delta) \leq p - 1$, then we know by \autoref{separability} that $\alb_X$ is separable. In particular, the finite part of the Stein must also be separable. Since it is also purely inseparable, we deduce that it is an isomorphism, i.e. $\alb_{X, *}(\cO_X) = \cO_{\Alb(X)}$.
\end{proof}

\begin{thm}\label{main_thm_kod_dim_zero}
	Let $X$ be a normal proper variety, let $\Delta \geq 0$ be an effective $\bZ_{(p)}$--divisor, and let $\delta  \in \{\pm 1\}$. If $\kappa_S(\delta (K_X + \Delta)) = 0$, then $\Alb(X)$ is ordinary and the Albanese morphism $\alb_X \colon X \to \Alb(X)$ is a fibration. If furthermore $\delta  = 1$, then $\alb_X$ is also separable.
\end{thm}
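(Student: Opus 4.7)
The plan is to apply \autoref{overly_general_result} to a Cartier module built from a section realizing $\kappa_S=0$, and then deduce separability in the $\delta = 1$ case from \autoref{separability}. Concretely, since $\kappa_S(\delta(K_X + \Delta)) = 0 > -\infty$, one fixes $r > 0$ coprime to $p$ such that $r\delta(K_X + \Delta)$ is Cartier, and a non-zero section $t \in H^0(X, \cO_X(r\delta(K_X + \Delta)))$. \autoref{Cartier_structure_on_pluricanonical_bundles} equips $\cM := \cO_X(r\delta(K_X + \Delta))$ with a Cartier module structure $\theta_t$; combining \autoref{K_S(X)_not_-infty_implies_K_S(X)_equals_K(X)} and \autoref{basic_props_var_kod_zero} one obtains $\kappa(\delta(K_X + \Delta)) = 0$ and $H^0_{ss, \theta_t}(X, \cM) = H^0(X, \cM) \neq 0$.

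Next I verify the three hypotheses of \autoref{overly_general_result} for $\cM$. First, $H^0_{ss}(X, \cM) \neq 0$ and \autoref{generic vanishing}.\autoref{van_and_non_van} force $0 \in V^0_{\injj}(\alb_{X,*}\cM)$, since $p^{es}(0) = 0$ for every $e$. Second, the existence of a positive-dimensional $Z \subseteq V^0_{\injj}(\alb_{X,*}\cM)$ with $-Z \subseteq V^0_{\injj}(\alb_{X,*}\cM)$ would yield, for general $\alpha \in Z$, non-zero sections of $\cM \otimes \alb_X^*\alpha^{\pm 1}$; taking products and varying $\alpha$ would produce a positive-dimensional family of effective divisors in $\lvert 2r\delta(K_X + \Delta) \rvert$ (using injectivity of $\alb_X^*$, \autoref{existence_of_albanese}), contradicting $\kappa(\delta(K_X + \Delta)) = 0$. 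Third, that $\Alb(X)$ has no supersingular factor follows as in the proof of \autoref{main_thm_wo_ss_factor}: a supersingular elliptic quotient would be ruled out by \autoref{basic facts about AV_not_so_basic} together with $H^0_{ss}(X, \cM) \neq 0$. Applying \autoref{overly_general_result} then yields the surjectivity of $\alb_X$ with connected fibers, together with the ordinarity of $\Alb(X)$.

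It remains to promote ``connected fibers'' to ``fibration'', and to obtain separability when $\delta = 1$. For $\delta = 1$, case (3) of \autoref{separability} applies directly: $\kappa(K_X + \Delta) = 0$ holds, and $H^0_{ss}(X, \cM) \neq 0$ gives $\alb_{X,*}\cM \not\sim_C 0$, so $\alb_X$ is separable; combined with the purely inseparable finite part of its Stein factorization, this forces that finite part to be an isomorphism, so $\alb_X$ is a separable fibration. The principal obstacle is the $\delta = -1$ case, in which none of the hypotheses of \autoref{separability} is available: one must rule out a non-trivial purely inseparable finite factor $g\colon S \to \Alb(X)$ of the Stein factorization without separability. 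My plan is to strengthen the conclusion of \autoref{overly_general_result} by exploiting the already-established ordinarity of $\Alb(X)$, so as to apply \autoref{V0_zero_dim_implies_done_ordinary_case} to the sheaf $\omega$ appearing in that proof (rather than merely \autoref{V0_zero_dim_implies_done}), thereby upgrading ``purely inseparable'' to ``birational'' and concluding that $g$ is an isomorphism; making the identification $V^0_{\injj}(g_*\omega) = \{0\}$ that this requires is the main technical hurdle.
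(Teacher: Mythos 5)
Your proposal coincides with the paper's own proof up to the decisive step, which you explicitly leave open. The reduction via \autoref{basic_props_var_kod_zero} to a Cartier module structure $\theta_t$ on $\cM = \cO_X(\delta m(K_X + \Delta))$ with $H^0_{ss, \theta_t}(X, \cM) \neq 0$, the verification of the hypotheses of \autoref{overly_general_result} exactly as in \autoref{main_thm_wo_ss_factor} and \autoref{main_thm_Delta_integral}, the resulting surjectivity, ordinarity and purely inseparable Stein finite part, and the $\delta = 1$ separability via case (3) of \autoref{separability} are all as in the paper; your plan of applying \autoref{V0_zero_dim_implies_done_ordinary_case} to the module $g_*\omega$ from the proof of \autoref{overly_general_result} is also precisely the paper's strategy. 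But the theorem asserts that $\alb_X$ is a fibration for \emph{both} values of $\delta$, and you never establish the identification $V^0_{\injj}(g_*\omega) = \{0\}$ that this requires; flagging it as ``the main technical hurdle'' does not discharge it, so as written nothing beyond connected fibers is proved when $\delta = -1$. This is a genuine gap, not a routine verification: a priori $V^0_{\injj}(g_*\omega)$ could contain non-trivial $p$-power torsion points, and the remark after \autoref{V0_zero_dim_implies_done_ordinary_case} stresses that excluding these is exactly the extra input needed to upgrade ``purely inseparable'' to ``birational''.

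The paper closes this step with the same $\kappa = 0$ rigidity you already used to rule out positive-dimensional $Z$ with $\pm Z \inc V^0_{\injj}(a_*\cM)$, and it is insensitive to the sign $\delta$. Since $V^0_{\injj}(g_*\omega)$ is finite and consists of $p$-power torsion points (Step 1 of the proof of \autoref{V0_zero_dim_implies_done}, already invoked inside \autoref{overly_general_result}), it suffices to show it contains no non-zero torsion point. By the proof of \autoref{overly_general_result} and \autoref{generic vanishing}.\autoref{itm:gv_link_with_section}, one has $V^0_{\injj}(g_*\omega) \inc V^0_{\injj}(a_*\cM) \inc V^0(a_*\cO_X(D))$, where $D \geq 0$ is the effective divisor in $|\delta m(K_X + \Delta)|$. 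If $\beta$ is non-trivial of $n$-torsion and lies in this set, there is an effective $E \in |D + \alb_X^*\beta|$; then $nE \sim nD$ together with $h^0(X, \cO_X(nD)) = 1$ (from $\kappa(X, D) = 0$, via \autoref{K_S(X)_not_-infty_implies_K_S(X)_equals_K(X)}) forces the \emph{equality} of divisors $nE = nD$, hence $E = D$ and $\alb_X^*\beta \cong \cO_X$, contradicting the injectivity of $\alb_X^*$ (\autoref{existence_of_albanese}). Thus $V^0_{\injj}(g_*\omega) = \{0\}$ and \autoref{V0_zero_dim_implies_done_ordinary_case} applies, completing the $\delta = -1$ case; your instinct that ordinarity is the essential extra hypothesis is correct, but it is consumed by \autoref{V0_zero_dim_implies_done_ordinary_case} itself, not by the torsion exclusion.
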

\begin{proof}
	By \autoref{basic_props_var_kod_zero}, there exists $m > 0$ such that $H^0_{ss}(X, \cO_X(\delta m(K_X + \Delta))) \neq 0$ for some Cartier module structure. We can therefore apply the same proof as in \autoref{main_thm_Delta_integral} to obtain that $\Alb(X)$ is ordinary, that the Albanese morphism of $X$ is surjective, and that the finite part of its Stein factorization is purely inseparable. 
	
	Let us show that it is in fact an isomorphism (we will use the same notations as in the proof of \autoref{overly_general_result}). By \autoref{V0_zero_dim_implies_done_ordinary_case}, we are left to show that $V^0_{\injj}(g_*\omega) = \{0\}$. Let $m > 0$ such that $|\delta m (K_X + \Delta)|$ is non--zero (say it contains the effective divisor $D$). Since $V^0_{\injj}(g_*\omega) \inc V^0_{\injj}(a_*\cO_X(\delta m (K_X + \Delta))) \inc V^0(f_*\cO_X(D))$  (see \autoref{generic vanishing}.\autoref{itm:gv_link_with_section} and the proof of \autoref{overly_general_result}), it is enough to show that there is no non--zero torsion point in $V^0(f_*\cO_X(D))$. By contradiction, assume that there exists $\beta \in \bighat{A}$ non--trivial of $n$--torsion such that $|D + a^*\beta| \neq \emptyset$ (say it contains an effective divisor $E$). Given that $nD \sim nE$ and that $\kappa(X, D) = 0$, we automatically deduce that we have an \emph{equality} of divisors $nD = nE$. This means that $D = E$, or equivalently that $a^*\beta \cong \cO_X$. This contradicts the injectivity of $a^*$ (see \autoref{existence_of_albanese}), so we have proven that $a$ is a fibration.

	If $\delta  = 1$, we can further apply \autoref{separability} to deduce as in the proof of \autoref{main_thm_Delta_integral} that $a$ is a separable fibration.
\end{proof}

Here is a variant of \autoref{main_thm_kod_dim_zero}, generalizes the main result of \cite{Hacon_Patakfalvi_Zhang_Bir_char_of_AVs}.

\begin{cor}\label{main_thm_Hacon_Pat_Zhang}
	Let $X$ be a normal proper variety, let $\Delta \geq 0$ be an effective $\bZ_{(p)}$--divisor, and let $\delta  \in \{\pm 1\}$. If $\kappa(\delta (K_X + \Delta)) = 0$ and $\alb_{X, *}\cO_X(\delta m(K_X + \Delta)) \not\sim_C 0$ for some $m > 0$ and some Cartier module structure on $\cO_X(\delta m(K_X + \Delta))$, then $\alb_X \colon X \to \Alb(X)$ is surjective with connected fibers. If furthermore $\delta  = 1$, then $\alb_X$ is a separable fibration.
\end{cor}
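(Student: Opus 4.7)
The plan is to run the proof of \autoref{main_thm_kod_dim_zero} with the weaker hypotheses $\kappa(\delta(K_X+\Delta)) = 0$ and $a_*\cM \not\sim_C 0$ in place of $\kappa_S(\delta(K_X+\Delta)) = 0$, replacing the ``no supersingular factor'' input needed by \autoref{overly_general_result} with a direct depth argument. Set $\cM \coloneqq \cO_X(\delta m(K_X+\Delta))$ with the given Cartier structure. Since $\kappa(\delta(K_X+\Delta)) = 0$, a suitable Veronese of $R(X, \delta(K_X+\Delta))$ is a polynomial ring in one variable, so after enlarging $m$ to a sufficiently divisible multiple (and transporting the Cartier structure along via \autoref{Cartier_structure_on_pluricanonical_bundles}, which preserves non-nilpotence) we may assume there is a unique effective $D \in |\delta m(K_X+\Delta)|$ with $|nD| = \{nD\}$ for every $n \geq 1$.

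I would then verify the first two hypotheses of \autoref{overly_general_result} exactly as in the proof of \autoref{main_thm_kod_dim_zero}. Non-nilpotence together with \autoref{exists_V^i_with_codim_i} and \autoref{generic vanishing}.\autoref{itm:gv_p_closed} produces a prime-to-$p$ torsion point $\alpha \in V^0_{\injj}(a_*\cM) \inc V^0(a_*\cM)$. If $\alpha$ has order $n$ and $E \in |D + a^*\alpha|$ is effective, then $nE \sim nD$, and since $|nD| = \{nD\}$ we conclude $nE = nD$, so $E = D$ and $\alpha = 0$ by the injectivity in \autoref{existence_of_albanese}. Hence $0 \in V^0_{\injj}(a_*\cM)$. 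Similarly, a positive-dimensional $Z \inc V^0_{\injj}(a_*\cM)$ with $-Z \inc V^0_{\injj}(a_*\cM)$ would produce, for each $\alpha \in Z$, effective divisors $E_1(\alpha) \in |D + a^*\alpha|$ and $E_2(\alpha) \in |D - a^*\alpha|$ with $E_1(\alpha) + E_2(\alpha) = 2D$; since $2D$ admits only finitely many effective decompositions, a pigeonhole argument yields $\alpha \neq \alpha'$ in $Z$ with $a^*\alpha \cong a^*\alpha'$, contradicting \autoref{existence_of_albanese}.

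The key new ingredient, which sidesteps the ``no supersingular factor'' hypothesis, is a depth-theoretic argument showing that $V^0_{\injj}(a_*\cM)$ is automatically finite under the above. Let $\cN \coloneqq \cH^0 \FM_A(a_*\cM)$ and let $W$ be an irreducible component of codimension $k$ of $V^0_{\injj}(a_*\cM) = -\Supp(\cN_{\injj})$. The generic point $-\eta$ of $-W$ is a minimal associated prime of $\cN_{\injj}$, so the stalk $\cN_{\injj, -\eta}$ has depth $0$ over the regular local ring $\cO_{\bighat{A}, -\eta}$ of dimension $k$; Auslander-Buchsbaum gives projective dimension $k$, whence $\Tor_k(\cN_{\injj}, k(-\eta)) \neq 0$, i.e., $\eta \in V^k_{\injj}(a_*\cM)$. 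Since $W$ has codimension $k$ this forces $W \inc S^k(a_*\cM)$. Then \autoref{generic vanishing}.\autoref{itm:gv_p_closed} gives density of prime-to-$p$ torsion points in $W$, each of which must be $0$ by the previous paragraph; hence $W = \{0\}$. Therefore $V^0_{\injj}(a_*\cM)$ is zero-dimensional and $0$ is isolated in it.

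With $0$ isolated, \autoref{first_step_general_proof} yields the surjectivity of $a$ together with a surjective Cartier morphism $a_*\cM \to \omega_A$. Factoring $a = g \circ f$ via its Stein factorization and repeating the end of the proof of \autoref{overly_general_result}, adjunction produces $\omega \inc \omega_Y$ with $V^0_{\injj}(g_*\omega) \inc V^0_{\injj}(a_*\cM)$ finite, so \autoref{V0_zero_dim_implies_done} applied to the finite morphism $g$ shows that $g$ is purely inseparable, i.e., $a$ has connected fibers. When $\delta = 1$, case (3) of \autoref{separability} applies verbatim to yield separability, so $a$ is a separable fibration. The main obstacle is precisely the depth-theoretic replacement above: the condition $a_*\cM \not\sim_C 0$ is too weak to grant $H^0_{ss}(X, \cM) \neq 0$, so the argument of \autoref{main_thm_wo_ss_factor} ruling out supersingular factors is unavailable, and density of prime-to-$p$ torsion must instead be accessed through the Auslander-Buchsbaum calculation.
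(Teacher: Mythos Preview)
Your approach is essentially the paper's: show that $V^0_{\injj}(a_*\cM)$ contains no non-zero torsion point, combine this with the fact that every component of $V^0_{\injj}$ lies in some $S^k$ (your Auslander--Buchsbaum computation is exactly the argument behind \autoref{exists_V^i_with_codim_i}, applied to each component rather than just one) to conclude $V^0_{\injj}(a_*\cM)=\{0\}$, and then run the tail of the proof of \autoref{overly_general_result}.  In particular, once you have $V^0_{\injj}(a_*\cM)=\{0\}$ your earlier verification of the ``no positive--dimensional $Z$ with $-Z\subseteq V^0_{\injj}$'' hypothesis is redundant.

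The one step that is not justified is ``enlarging $m$ and transporting the Cartier structure along via \autoref{Cartier_structure_on_pluricanonical_bundles}''.  That definition builds specific structures $\theta_t$ out of a chosen section $t$; it does not furnish a mechanism for carrying an \emph{arbitrary} given Cartier structure on $\cO_X(\delta m(K_X+\Delta))$ to a multiple of $m$ while preserving non--nilpotence of the pushforward.  You do not need this step: to see that $V^0(a_*\cM)$ has at most one torsion point, take two torsion points $\beta_1,\beta_2$ with effective $E_i\in|\delta m(K_X+\Delta)+a^*\beta_i|$ and $n$ killing both; then $nE_1,nE_2$ both lie in $|n\delta m(K_X+\Delta)|$, which has at most one element since $\kappa=0$, so $E_1=E_2$ and $\beta_1=\beta_2$.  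Together with your depth argument this gives $V^0_{\injj}(a_*\cM)=\{\beta\}$ for a single prime--to--$p$ torsion point $\beta$; after twisting $\cM$ by $a^*\beta$ one has $V^0_{\injj}=\{0\}$ and can proceed exactly as you indicate.
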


\begin{rem}
	The key difference with \autoref{main_thm_kod_dim_zero} is that in this statement, we do not require $\Alb(X)$ to have no supersingular factor. The non--nilpotence assumption above is for example satisfied in the following situations:
	\begin{itemize}
		\item $X$ has maximal Albanese dimension, $\delta = 1$, $\Delta = 0$ and $m = 1$ (\cite[Theorem 0.2]{Hacon_Patakfalvi_Zhang_Bir_char_of_AVs});
		\item $\Delta$ is a $\bZ$--divisor, $\delta = 1$, $m = 1$ and $H^0_{ss}(X_{\eta}, \omega_{X_{\eta}}(\Delta_{\eta})) \neq 0$, where $X_{\eta}$ denotes the generic fiber to $X \to \alb_X(X)$.
	\end{itemize}
\end{rem}

\begin{proof}
	Let $m > 0$ be as in the statement of \autoref{main_thm_Hacon_Pat_Zhang}, and set $\cM = \cO_X(\delta m (K_X + \Delta))$. The exact same argument as in the proof of \autoref{main_thm_kod_dim_zero} shows that $V^0_{\injj}(\alb_{X, *}\cM)$ does not contain non--zero torsion points. We therefore obtain from \autoref{generic vanishing}.\autoref{itm:gv_p_closed} and \autoref{exists_V^i_with_codim_i} that $V^0_{\injj}(a_*\cM) = \{0\}$. We can then proceed exactly as in the proof of \autoref{overly_general_result} to conclude that $a$ is surjective with connected fibers, and as in the proof of \autoref{main_thm_kod_dim_zero} to obtain that $a$ is a separable fibration when $\delta = 1$.
\end{proof}

Let us finish this section by explaining how to obtain Chen--Hacon's characterization of complex abelian varieties (\cite{Chen_Hacon_Characterization_of_abelian_varieties}) from our result. We will need the following weaker version of the ordinarity conjecture for abelian varieties:

\begin{thm}[\cite{Pink_On_the_order_of_the_reduction_of_a_point_on_an_abelian_variety}]\label{baby_ordinarity_conj_for_AV}
	Let $R$ be a finitely generated and $\bZ$--torsion free $\bZ$--algebra, and let $\cA \to \Spec R$ be an abelian scheme. Then the subset \[ \set{x \in \Spec R \emph{ a closed point}}{\cA_{\overline{x}} \emph{ has no supersingular factor }} \] is dense in $\Spec R$.
\end{thm}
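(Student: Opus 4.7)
The result is Pink's theorem from the cited paper, so strictly speaking the ``proof'' in the text amounts to an application of his result. Let me nevertheless outline the structure of a self-contained argument in order to describe where the real difficulty lies.

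A first reduction is to the case where $\Spec R$ is integral. Because $R$ is a finitely generated $\bZ$-algebra it is Jacobson, so closed points are dense in every closed subset, and by Noetherian induction on irreducible components it suffices to prove density on each such component. Minimal primes of $R$ consist of zerodivisors and so cannot meet $\bZ\setminus\{0\}$; hence for each such $\fp$ the quotient $R/\fp$ remains $\bZ$-torsion free, and its fraction field has characteristic zero. After further shrinking I may assume $R$ is regular and that $\mcA \to \Spec R$ has whatever additional convenient data is needed (a polarization, some level structure, etc.).

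The main idea is then to apply $\ell$-adic monodromy. Set $K = \mathrm{Frac}(R)$, let $\eta$ denote the generic point, fix an auxiliary prime $\ell$, and consider the continuous representation $\rho_\ell \colon \Gal(K^{\mathrm{sep}}/K) \to \GL(T_\ell\mcA_\eta)$. For every closed point $x \in \Spec R$ of residue characteristic $p \neq \ell$ at which $\mcA$ has good reduction, the Frobenius characteristic polynomial $P_x(t) \in \bZ[t]$ is independent of $\ell$ and, via the classification of abelian varieties over finite fields up to isogeny, determines the isogeny type of $\mcA_{\overline x}$. One checks that $\mcA_{\overline x}$ has a supersingular factor if and only if $P_x$ admits an irreducible factor all of whose complex roots are of the form $\sqrt{q(x)}\cdot\zeta$ for some root of unity $\zeta$; this condition cuts out a Zariski-closed subset $Z_\ell$ in the Zariski closure of the image of $\rho_\ell$.

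The main obstacle is to show that $Z_\ell$ has positive codimension. Once this is granted, the Chebotarev density theorem applied to the Frobenius conjugacy classes shows that the set of closed points at which $\mcA_{\overline x}$ has a supersingular factor has density zero, and its complement is in particular Zariski dense. Establishing the requisite lower bound on the image of $\rho_\ell$, i.e.\ ruling out the pathological case in which every Frobenius class already lies inside $Z_\ell$, is precisely the delicate content of Pink's argument; in the isotrivial case it reduces to the classical fact that a single abelian variety over a number field cannot reduce to something with a supersingular factor at almost every prime. Given that Pink in fact proves a considerably stronger statement, the honest way to handle the theorem in the present paper is simply to cite his result.
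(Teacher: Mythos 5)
Your proposal misidentifies where the actual work lies, and as a result has a genuine gap: Pink's Corollary 1.7 in \cite{Pink_On_the_order_of_the_reduction_of_a_point_on_an_abelian_variety} is a statement about abelian varieties over \emph{number fields}, i.e.\ about closed points of an open subset of $\Spec \cO_K$ — it is not literally the theorem stated here for an arbitrary finitely generated $\bZ$--torsion free $\bZ$--algebra $R$. So ``the honest way is simply to cite his result'' does not close the argument: what remains is precisely the bridge that the paper's proof supplies. Namely, given a nonempty open $U \inc \Spec R$, torsion--freeness makes the generic fiber $U_{\bQ}$ a nonempty finite type $\bQ$--scheme, which therefore has a closed point whose residue field is a number field $K$; spreading out extends this point to a morphism $f \colon V \to U$ from a nonempty open $V \inc \Spec \cO_K$, Pink's Corollary 1.7 applied over $V$ produces a closed point $y \in V$ with $\cA_{\overline{y}}$ having no supersingular factor, and $x = f(y)$ is the desired closed point of $U$. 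Your opening reductions (Jacobson property, irreducible components, shrinking to a regular base) head in a different direction and never arrive at the number field setting where the cited result applies.

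The alternative route you sketch — an $\ell$-adic monodromy argument with Chebotarev directly over $\Spec R$ — would amount to reproving a relative version of Pink's theorem rather than using it, and you explicitly leave its pivotal step open: the lower bound on the monodromy image, i.e.\ ruling out that all Frobenius classes land in the supersingular locus $Z_\ell$. Beyond that concession, two further points would need care: equidistribution of Frobenius conjugacy classes over a base of dimension greater than one requires a Chebotarev theorem for arithmetic schemes (not just the classical number field statement), and the supersingular condition is a priori a union of closed conditions indexed by roots of unity, so one must first bound their order (via the degree bound $[\bQ(\zeta):\bQ] \leq 2g$) before speaking of a single closed subset $Z_\ell$. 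None of this machinery is needed: the specialization argument above is the paper's entire proof, and it is the piece missing from yours.
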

\begin{proof}
	Let $U \inc \Spec R$ be an non--empty open subset. We have to show that for some closed point $x \in U$, $\cA_{\overline{x}}$ has no supersingular factor. Let $\Spec K \to U_{\bQ}$ be a $K$--point of the generic fiber $U_{\bQ}$, where $K$ is a number field (we let $\cO_K$ denote its ring of integers). By usual spreading out, there exists a non--empty open $V \inc \Spec \cO_K$ such that the morphism $\Spec K \to U_{\bQ}$ extends to $f \colon V \to U$. By \cite[Corollary 1.7]{Pink_On_the_order_of_the_reduction_of_a_point_on_an_abelian_variety}, there exists a closed point $y \in V$ such that $\cA_{\overline{y}}$ has no supersingular factor. This is then also the case for $x = f(y)$, so the proof is complete.
\end{proof}

We can now proceed to the characterization of abelian varieties in characteristic zero.

\begin{thm}[\cite{Chen_Hacon_Characterization_of_abelian_varieties}]\label{Chen_Hacon_characterization}
	Let $X$ be a normal proper variety of maximal Albanese dimension over an algebraically closed field of characteristic zero, and assume that $P_2(X) = 1$. Then $X$ is birational to an abelian variety.
\end{thm}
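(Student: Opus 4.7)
The plan is to reduce \autoref{Chen_Hacon_characterization} to positive characteristic via a spreading-out argument, then apply \autoref{main_thm_wo_ss_factor} together with the weak ordinarity statement \autoref{baby_ordinarity_conj_for_AV}. Since $X$ has maximal Albanese dimension, the morphism $\alb_X \colon X \to \Alb(X)$ is generically finite onto its image; the goal is to show that it is in fact surjective and birational, which amounts to proving that its generic degree $d$ equals $1$.

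First, I would choose a finitely generated $\bZ$--torsion free $\bZ$--algebra $R \inc k$ together with a flat proper family $\mathcal{X} \to \Spec R$, an abelian scheme $\mathcal{A} \to \Spec R$, and a morphism $\alpha \colon \mathcal{X} \to \mathcal{A}$ over $R$ whose geometric generic fiber recovers $\alb_X$. After shrinking $R$ as needed, I would arrange that every geometric fiber $\mathcal{X}_{\overline{x}}$ is a normal proper variety of maximal Albanese dimension with $P_2(\mathcal{X}_{\overline{x}}) = 1$, and that $\mathcal{A} \to \Spec R$ is the \emph{relative} Albanese of $\mathcal{X} \to \Spec R$, so that $\mathcal{A}_{\overline{x}} = \Alb(\mathcal{X}_{\overline{x}})$ and $\alpha_{\overline{x}} = \alb_{\mathcal{X}_{\overline{x}}}$ at each geometric point $\overline{x}$. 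I would also ensure, after further localization, that the generic degree of $\alpha_{\overline{x}}$ is the constant $d$ on every closed fiber.

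By \autoref{baby_ordinarity_conj_for_AV} applied to the abelian scheme $\mathcal{A} \to \Spec R$, the set of closed points $x \in \Spec R$ at which $\Alb(\mathcal{X}_{\overline{x}})$ has no supersingular factor is dense in $\Spec R$; in particular (as is visible from the proof of \autoref{baby_ordinarity_conj_for_AV}) it contains closed points of arbitrarily large residue characteristic $p$. For any such $x$, the fiber $\mathcal{X}_{\overline{x}}$ satisfies every hypothesis of \autoref{main_thm_wo_ss_factor}, so $\alb_{\mathcal{X}_{\overline{x}}}$ is surjective and the finite part of its Stein factorization is purely inseparable. In particular, $d = [K(\mathcal{X}_{\overline{x}}) : K(\Alb(\mathcal{X}_{\overline{x}}))]$ is a power of $p$. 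Since this holds for closed points of arbitrarily large residue characteristic, we must have $d = 1$, so $\alb_X$ is birational onto its image; surjectivity in characteristic zero follows from surjectivity on one such closed fiber by standard flatness arguments, and this completes the proof that $X$ is birational to the abelian variety $\Alb(X)$.

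The main obstacle I expect is the condition that spreading-out is compatible with the formation of the Albanese, i.e.\ arranging $R$ and $\mathcal{A}$ so that $\mathcal{A}_{\overline{x}} = \Alb(\mathcal{X}_{\overline{x}})$ at every geometric point. This requires a relative version of \autoref{existence_of_albanese} and the fact that the invariants controlling the Albanese (such as $h^1(\cO)$ and the first Betti number) are locally constant in the family after sufficient shrinking. Every other ingredient is already available in the excerpt.
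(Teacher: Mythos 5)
Your proposal is correct and takes essentially the same route as the paper's proof: spread out over a finitely generated $\bZ$--torsion free $\bZ$--algebra, realize the relative Albanese via $\Pic^0$ of the family and its dual abelian scheme (the compatibility you flag as the main obstacle is exactly what the paper handles with the FGA machinery, after reducing to $X$ projective by Chow's lemma and arranging a section), invoke \autoref{baby_ordinarity_conj_for_AV} to find closed fibers whose Albanese has no supersingular factor, and apply \autoref{main_thm_wo_ss_factor} to those fibers. The only (harmless) difference is the endgame: the paper shrinks the base so that all residue characteristics exceed the constant degree $d$, making each fiberwise Albanese map separable and hence birational by the pure inseparability conclusion, whereas you observe that $d$ must be a $p$--power for points of arbitrarily large residue characteristic $p$ --- two packagings of the same large-characteristic trick.
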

\begin{proof}
	By Chow's lemma, we may assume that $X$ is projective. Let $f \colon \cX \to \Spec S$ be a model of $X$ over a $\bZ$--torsion free and finitely generated $\bZ$--algebra, which is also a domain. Let $U \inc \Spec S$ be a dense open such that $\cX|_U \to U$ is a projective flat fibration with normal geometric fibers (\cite[Proposition 9.9.4]{EGA_IV.3}) and that each $R^if_*\cO_{\cX}$ is locally free. Up to base change and shrinking $U$, we may also assume that $f$ has a section. We therefore know by \cite[9.4.8, 9.5.4 and 9.5.10]{FGA_explained} that $\Pic^0_{\cX_U/U}$ exists and is a smooth projective abelian scheme over $U$. Let $\cA$ denote its dual abelian scheme. Since $f$ has a section, we therefore know by \cite[9.3.11, 9.2.5 and 9.4.3]{FGA_explained} that there exists a Poincaré sheaf on $\Pic^0_{\cX_U/U}$, and we can therefore use it to construct an Albanese map $\cX|_U \to \cA$. Since $X$ has maximal Albanese dimension, we may shrink $U$ and therefore assume that for all closed point $s \in U$, the map $\cX_s \to \cA_s$ is generically finite of fixed degree $d$ (independent of $s$). We may also assume that all residue characteristics of $U$ are higher than $d$, so that each $\cX_s \to \cA_s$ is separable, and finally we may also assume that $P_2(\cX_s) = 1$ for all $s \in U$.
	
	By \autoref{baby_ordinarity_conj_for_AV}, there exists a closed point $s \in U$ such that $\cA_{\overline{s}}$ has no supersingular factor. Since $\cX_s \to \cA_s$ is separable, we obtain by \autoref{main_thm_wo_ss_factor} that $\cX_s \to \cA_s$ is birational, whence $d = 1$. This forces $X \to \Alb(X)$ to be birational too.
\end{proof}

\begin{rem}
	Given the current state of the ordinarity conjecture, we are not able to obtain more general characteristic zero statements with our techniques than this one.
\end{rem}

\begin{example}
	We have proven in particular if $X$ is a normal proper variety such that $\kappa_S(K_X + \Delta) = 0$, then a general fiber of the Albanese morphism is integral since $\alb_X$ is separable. One may wonder if under our assumptions, the general fiber could be for example normal. In the case where $X_{\eta}$ is a curve, or more generally where $K_{X_{\eta}} + \Delta_{\eta} \sim_{\bQ} 0$, then it will follow that $(X_{\eta}, \Delta_\eta)$ is globally sharply $F$--split, so a general fiber will automatically be normal and even $F$--pure. \\
	
	However, this may not be the case in general. We will construct, for any positive characteristic, a smooth threefold $X$ with $\kappa_S(X) = 0$ whose Albanese morphism is a fibration to an elliptic curve such that the general fiber is non--normal, and another one for any $p > 2$, where the general fiber is normal but not $F$--pure. \\
	
	Let $E$ be an ordinary elliptic curve and let $S$ be an ordinary $\mathrm{K3}$ surface. Set $S_{K(E)} \coloneqq S \times_k K(E)$.	Let us first explain the idea behind both counterexamples. Blowing points of $S_{K(E)}$ with purely inseparable residue fields, we find a regular surface $S'$ over $K(E)$ with bad geometric properties (either non geometrically normal, or geometrically normal but non geometrically $F$--pure). Globalizing this, we find some smooth threefold $X$, birational to $E \times S$, whose generic fiber is exactly $S'$. \\
	
	\textbf{Step 1:} Find a birational morphism $Y \to \bA^2_{k(t)}$ (given by a sequence of blowups at regular points), such that $Y$ is regular and either non-geometrically normal, or geometrically normal but not geometrically $F$--pure and $p > 2$. \\
	
	Throughout, we will only blowup regular surfaces at closed points, so all the surfaces we will obtain this way will automatically be regular. Blowing up $\bA^2_{k(t)}$ at the closed point whose ideal is given by $(x^p - t, y)$, we obtain the surface \[ Y_1 \coloneqq \{ (x^p - t)v - yu = 0\} \inc \bA^2_{x, y} \times \bP^1_{u, v}. \] In the chart $u \neq 0$, we obtain \[ \{(x^p - t)v - y = 0\} \inc \bA^3_{x, y, v},\] which is isomorphic to $\bA^2_{x, v}$. In the chart $v \neq 0$, this gives \[ \{ x^p - t - yu = 0\} \inc \bA^3_{x, y, u}. \] Note that by the Jacobian criterion, this is smooth everywhere except at the point $s \in Y_1$ corresponding to the ideal $(x^p - t, y, u)$.  
	
	Let $Y_2$ denote the blowup of $Y_1$ at $s$, and let us look at the equations over the chart $v \neq 0$ ($Y_1$ is smooth on the chart $u \neq 0$). The blowup of $\bA^3_{x, y, u}$ at this point is the closed subscheme of $\bA^3_{x, y, u} \times \bP^2_{a, b, c}$ cut out by the equations 
	\[ \begin{cases}
		(x^p - t)c - ua = 0, \\
		(x^p - t)b - ya = 0, \\
		yc - ub = 0.
	\end{cases} \] 
	Hence, $Y_2$ is given by the following equations (over the chart $v \neq 0$):
	
	\begin{itemize}
		\item in the chart $a \neq 0$, we obtain $\{ (x^p - t)bc - 1 = 0\} \inc \bA^3_{x, b, c}$ (which is smooth);
		\item in the chart $b \neq 0$, we obtain $\{ (x^p - t) - y^2c = 0\} \inc \bA^3_{x, y, c}$ (which is smooth everywhere, except at $(x^p - t, y, c)$);
		\item in the chart $c \neq 0$, we obtain $\{ (x^p - t) - v^2b = 0\} \inc \bA^3_{x, v, b}$ (which is smooth everywhere, except at $(x^p - t, v, b)$).
	\end{itemize}
	
	Note that the non--smooth points in the charts $b \neq 0$ and $c \neq 0$ are the same. Thus, $Y_2$ is smooth everywhere except at one point, and around this point, it is given by $\{x^p - t - y^2c = 0\} \inc \bA^3_{x, y, c}$.
	
	For all $2 \leq i \leq p$, we inductively define $Y_i$ to be the blowup of $Y_{i - 1}$ at its unique non-smooth point. It will then be smooth everywhere, except at one point, which has a neighborhood isomorphic to \[ \{x^p - t - y^ic\} \inc \bA^3_{x, y, c}.\]
	
	When base changing to the algebraic closure $Y_i \times_{k(t)} \overline{k(t)}$, this neighborhood becomes \[ \ttilde{x}^p - y^ic \inc \bA^3_{\ttilde{x}, y, c}, \] where we made the change of variables $\ttilde{x} \coloneqq x - t^{1/p}$. By Fedder's criterion, this is never $F$--pure. Furthermore, this is normal if and only if $i < p$, so we conclude this step. \\
	
	\textbf{Step 2:} There exists a birational morphism $S' \to S_{K(E)}$, such that $S'$ satisfies the same geometric singularity properties as $Y$. \\
	
	Since $S$ is smooth, there is some open $U \inc S_{K(E)}$ and the étale morphism $U \to \bA^2_{K(E)}$. Since $K(E)$ is a separable extension of $k(t)$, we obtain an étale morphism $U \to \bA^2_{k(t)}$. By translating, we may assume that the image of $U$ contains the point we blew up in Step 1. Define \[ Y' \coloneqq U \times_{\bA^2_{k(t)}} Y. \] Since $Y' \to Y$ is étale, $Y'$ has the same geometric singularity properties as $Y$. Furthermore, since $Y' \to U$ is birational, we know by \cite[Theorem II.7.17]{Hartshorne_Algebraic_Geometry} that it is given by a blowup at some closed subscheme $Z \inc U$. Since $U$ is a regular surface, we may assume that $Z$ has codimension at least $2$ (see \cite[Exercise 7.11.(c)]{Hartshorne_Algebraic_Geometry}), and hence is a finite union of points (one could have also computed that directly, without using this result). In particular, $Z$ is again closed in $S_{K(E)}$. If we let $S'$ be the blowup of $S_{K(E)}$ at the same closed subscheme $Z$, then $S'$ satisfies the required properties. \\
	
	\textbf{Step 3:} Globalize the construction and obtain the example. \\
	
	Since $S$ is a $\mathrm{K3}$ surface and $E$ is an elliptic curve, the Albanese morphism of $E \times S$ is given by the projection to $E$. In particular, its generic fiber is exactly $S_{K(E)}$. Define $X'$ as the blowup of $E \times S$ at the closure of the image of $Z \to S_{K(E)} \inc E \times S$. Since $S'$ is regular, $X$ is smooth around the image of $S' \to X'$. 
	
	Let $X \to X'$ be a resolution of singularities leaving invariant the smooth locus of $X'$. Then the generic fiber of the composition $X \to E$ is still $S'$, and $\kappa(K_X) = \kappa(K_{E \times S}) = 0$. Moreover, let $\pi$ denote the composition $X \to X' \to E \times S$. Since $E \times S$ is smooth, we know by \cite[Theorem 1.1]{Rulling_Chatzistamatiou_GR_for_smooth} and duality that \[ \begin{cases*}
		R\pi_*\cO_X \cong \cO_{E \times S}; \\
		\pi_*\omega_X \cong \omega_{E \times S}.
	\end{cases*} \] Given that $H^0_{ss}(E \times S, \omega_{E \times S}) \neq 0$ ($E$ and $S$ are ordinary), we obtain that also $H^0_{ss}(X, \omega_X) \neq 0$, whence $\kappa_S(K_X) = 0$. 

	To conclude, we are then left to show that the Albanese morphism of $X$ is given by the composition $\pi'$ of $X \to E \times S$ and the projection $E \times S \to E$. By \autoref{existence_of_albanese}, we have a factorization $X \to \Alb(X) \to E$ of $\pi'$, so given that $h^1(X, \cO_X) = h^1(E \times S, \cO_{E\times S}) = 1$ and $\Alb(X) \to E$ is surjective, $\Alb(X)$ is automatically a curve. In particular, $\Alb(X) \to E$ is finite. Since $\pi'_*\cO_X = \cO_E$, we deduce that $\Alb(X) = E$.
\end{example}

\section{A bound independent of the characteristic in the case of maximal Albanese dimension}

Let us first recall the following result about Iitaka fibrations. Although this is most likely well--known, we could not find a reference.

\begin{sprop}\label{Iitaka_that_I_need}
	Let $X$ be a normal proper variety with canonical singularities, and assume that $\kappa(X, K_X) \geq 0$. Then if we take the Iitaka fibration of $K_X$: 
	\[ \begin{tikzcd}
		Y \arrow[r, "\pi"] \arrow[d, "g"'] & X \arrow[ld, dashed] \\
		Z                                  &                     
	\end{tikzcd} \] and let $\eta$ denote the generic point of $Z$, then $\kappa(Y_{\eta}, K_{Y_{\eta}}) = 0$.
\end{sprop}
\begin{proof}
	By \cite[Theorem 2.1.33]{Lazarsfeld_Positivity_in_algebraic_geometry_I}, we know that $\kappa(Y_{\eta}, (\pi^*K_X)|_{Y_{\eta}}) = 0$. Since $X$ has canonical singularities, we can write $K_Y \sim_{\bQ} \pi^*K_X + E$ for an effective and $\pi$--exceptional divisor $E$ on $Y$. An issue is that a general fiber of $g$ may not be normal, so let us first reduce to this case. Given $N \gg 0$, consider the commutative square 
	\[ \begin{tikzcd}
		W \arrow[r, "f"] \arrow[d, "h"'] & Y \arrow[d, "g"] \\
		Z \arrow[r, "F^N"']               & Z,               
	\end{tikzcd} \] where we set $W$ to be the normalization of $(Y \times_{Z, F^N} Z)_{\red}$, and $f$ and $h$ are the induced morphisms. We then know by \cite[Lemma 2.4]{Ji_Waldron_Structure_of_geometrically_non_reduced_varieties} that the geometric generic fiber of $h$ is normal, hence so is a general fiber $H$ by \cite[Proposition 9.9.4]{EGA_IV.3}. Since the induced map $W_{\eta} \to Y_{\eta}$ is finite, we know that $\kappa(W_{\eta}, f^*\pi^*K_X|_{W_{\eta}}) = 0$, and we need to show that $\kappa(W_{\eta}, (f^*(\pi^*K_X + E))|_{W_{\eta}}) = 0$ (recall that $K_Y \sim_{\bQ} \pi^*K_X + E$). 

	We therefore have to show that for a given $m > 0$ big and divisible enough and $H$ a general enough fiber (depending on $m$), it holds that $h^0(H, \cO_H(m(f^*(\pi^*K_X + E))|_H)) = 1$. Fix an integer $m > 0$, and take a general enough fiber $H$ so that it is normal, that $h^0(H, \cO_H(mf^*\pi^*K_X|_H)) = 1$, that $f^*E|_H$ is effective and $(f \circ \pi)$--exceptional and that $H \to \pi(f(H))$ is generically finite. 

	Consider the Stein factorization $H \xrightarrow{\mu} T \xrightarrow{\phi} \pi(f(H))$. Since $mf^*E|_H$ is therefore effective and $\mu$--exceptional, we know by \cite[Example 2.1.16]{Lazarsfeld_Positivity_in_algebraic_geometry_I} that \[ h^0(H, \cO_H(mf^*(\pi^*K_X + E)|_H)) = h^0(H, \cO_H(m(f^*\pi^*K_X)|_H) = 1, \] so the proof is complete. \qedhere
	
\end{proof}

We will adapt the techniques from \cite{Zhang_Pluricanonical_maps_of_varieties_of_maximal_albanese_dimension_in_positive_characteristic} in order to obtain the following:

\begin{sthm}\label{main_thm_Zhang_technique}
	Let $X$ be a normal proper variety of maximal Albanese dimension with canonical singularities. Assume further that $\kappa_S(K_X) \geq 0$ and that $P_4(X) = 1$. Then $X \to \Alb(X)$ is birational.
\end{sthm}
\begin{srem}
	\begin{itemize}
		\item If $\Alb(X)$ is ordinary, then it follows from \cite[Theorem 4.3]{Baudin_Positive_characteristic_generic_vanishing_theory} that $S^0(X, \omega_X) \neq 0$. In particular, $\kappa_S(K_X) \geq 0$.
		\item Note that any variety of general type $X$ satisfies $\kappa_S(K_X) \geq 0$ (see \cite[Lemma 4.1.5]{Hacon_Pat_GV_Characterization_Ordinary_AV}), so \autoref{main_thm_Zhang_technique} shows that any variety of general type and maximal Albanese dimension satisfies $P_4(X) \geq 2$.
		\item The proof will not tell us anything about $V^0_{\injj}(a_*\omega_X)$. In particular, the technique of the proof of \autoref{overly_general_result} does not allow us to obtain results about varieties which do not have maximal Albanese dimension.
	\end{itemize}
\end{srem}
\begin{proof}
	We may assume that the base field $k$ is uncountable. We will use the exact same notations as in the proof of \autoref{Iitaka_that_I_need}. If $\kappa(K_X) = 0$, then we know by \autoref{main_thm_Hacon_Pat_Zhang} or \cite[Theorem 0.1]{Hacon_Patakfalvi_Zhang_Bir_char_of_AVs} that $X \to \Alb(X)$ is birational, so assume that $\kappa(K_X) > 0$. 
	
	For $e \gg 0$, consider the trace map \[ F^e_*\cO_Y((1 + p^e)K_Y) \to \cO_Y(2K_Y). \] Let us show that its pushforward under $g$ does not vanish. This is equivalent to showing that $S^0(Y_{\eta}, \cO_{Y_{\eta}}(2K_{Y_{\eta}})) \neq 0$, where $\eta$ denotes the generic point of $Z$. Note that since $\kappa_S(K_X) \geq 0$, we know that also $\kappa_S(K_Y) \geq 0$, since $\pi_*\cO_Y(mK_Y) \cong \cO_X(mK_X)$ for all $m \geq 0$ ($X$ has canonical singularities). In particular, $\kappa_S(K_{Y_{\eta}}) = 0$. Given that $K_Y \geq 0$ by \cite[Theorem 4.3]{Baudin_Positive_characteristic_generic_vanishing_theory}, we also have that $K_{Y_{\eta}} \geq 0$, so $S^0(Y_{\eta}, \cO_{Y_{\eta}}(2K_{Y_{\eta}})) \neq 0$ by \autoref{basic_props_var_kod_zero}.
	
	Let us show that the image of a very general fiber $G$ of $g$ to $\Alb(X)$ is the translate of a fixed abelian subvariety. By \cite[Theorem 1.1]{Patakfalvi_Waldron_Singularities_of_general_fibers}, we know that $K_{W_{\eta}} \leq f^*K_{Y_{\eta}}$. Since $K_W \geq 0$ by \cite[Theorem 4.3]{Baudin_Positive_characteristic_generic_vanishing_theory}, we deduce that \autoref{Iitaka_that_I_need} that $\kappa(K_{W_{\eta}}) = 0$. In particular, we also have that $\kappa(K_H) = 0$ for a very general fiber $H$ of $h$. Consider the composition $H \to \Alb(H) \to \Alb(X)$. Since $H \to \Alb(H)$ is in particular surjective by \autoref{main_thm_Hacon_Pat_Zhang}, we deduce that the image of $H \to \Alb(X)$ is a translate of an abelian subvariety $K^H \inc \Alb(X)$. Since an abelian variety has only countably many abelian subvarieties, we deduce that $K^H = K$ does not depend on $H$. 
	
	Consider $B \coloneqq \Alb(X)/K$, so that $H$ gets contracted by the composition $H \to \Alb(X) \to B$. This is therefore also the case of a very general fiber $G = g(H)$ of $g$, so by the rigidity lemma we obtain a rational map $b$ fitting in the commutative diagram 
	\[ \begin{tikzcd}
		Y \arrow[r, "\pi"] \arrow[d, "g"'] & X \arrow[r, "\alb_X"] & \Alb(X) \arrow[d] \\
		Z \arrow[rr, "b", dashed]          &                       & B.               
	\end{tikzcd} \] Up to modifying $Z$ birationally and $Y$ accordingly, we may assume that $b$ is defined everywhere. By construction of the Iitaka fibration (and by our rational modification), there exists an big and semiample Cartier divisor $D'$ on $Z$ such that $g^*D' \leq \pi^*(nK_X) \leq nK_Y$ for some $n > 0$. By bigness, some power of $D'$ therefore contains an ample divisor $D$ and since $K_Y \geq 0$, we can consider the composition \[ F^e_*\cO_Y(mg^*D) \inc F^e_*\cO_Y((1 + p^e)K_Y) \to \cO_Y(2K_Y) \] with $m \gg 0$ (recall that $e \gg 0$). We can therefore push it forward to $Z$ and by our work above, we obtain a non--zero morphism \[ F^e_*\cO_Z(mD) \to g_*\cO_Y(2K_Y). \] Since $m \gg 0$, we have by Fujita vanishing (\cite{Fujita_Vanishing_theorem_for_semipositive_line_bundles}) that for all $\beta \in \bighat{B}$ and $i > 0$, $H^i(Z, F^e_*\cO_Z(mD) \otimes b^*\beta) = 0$ and $R^ib_*(F^e_*\cO_Z(mD) \otimes b^*\beta) = 0$. We therefore obtain by \cite[Proposition 2.8]{Pareschi_Popa_Regularity_on_AV_III} that $\cV \coloneqq \FM_B(b_*(F^e_*\cO_Z(mD)))$ is in particular a torsion--free sheaf in degree zero (even a vector bundle actually).
	
	Since we have a non--zero morphism $b_*F^e_*\cO_Z(mD) \to b_*g_*\cO_Y(2K_Y)$, we obtain by \autoref{properties_Fourier_Mukai_transform}.\autoref{itm:equiv_cat} a non--zero morphism \[ \FM_B(b_*g_*\cO_Y(2K_Y)) \to \cV. \] By \autoref{properties_Fourier_Mukai_transform}.\autoref{itm:support}, we therefore obtain a non--zero morphism \[ \cH^0(\FM_B(b_*g_*(2K_Y))) \to \cV. \] Since $\cV$ is torsion--free, the image of this morphism is also torsion--free. In particular, $\Supp(\cH^0(\FM_B(b_*g_*\cO_Y(2K_Y)))) = \bighat{B}$, so \autoref{properties_Fourier_Mukai_transform}.\autoref{itm:support_H0} gives us that for all $\beta \in \bighat{B}$, \[ H^0(X, \cO_X(2K_X) \otimes \alb_X^*\beta) \neq 0. \] Since $b$ is generically finite and $\dim(Z) = \kappa(K_X) > 0$, we have that $\dim(\bighat{B}) > 0$. The same argument as in the beginning of the proof of \autoref{main_thm_wo_ss_factor} then shows that $P_4(X) > 1$, contradicting our hypotheses.
\end{proof}

\bibliographystyle{alpha}
\bibliography{Bibliography}

\Addresses

\end{document}